\numberwithin{equation}{section}
\newtheorem{thm}{Theorem}[section]
\newtheorem{cor}[thm]{Corollary}
\newtheorem{lemma}[thm]{Lemma}
\newtheorem{proposition}[thm]{Proposition}
\newtheorem{defi}[thm]{Definition}
\newtheorem{remark}[thm]{Remark}
\newcommand{\cI}{\ensuremath{\mathcal{I}}}
\newcommand{\cP}{\ensuremath{\mathcal{P}}}
\newcommand{\FF}{\ensuremath{\mathbb{F}}}
\newcommand{\RR}{\ensuremath{\mathbb{R}}}
\newcommand{\NN}{\ensuremath{\mathbb{N}}}
\newcommand{\bn}{\ensuremath{\mathbf{n}}}
\newcommand{\bd}{\ensuremath{\mathbf{d}}}
\newcommand{\CC}{\ensuremath{\mathbb{C}}}
\newcommand{\xx}{\ensuremath{\bm{x}}}
\newcommand{\XX}{\ensuremath{\mathbb{X}}}
\newcommand{\XB}{\ensuremath{\mathcal{X}}}
\newcommand{\YB}{\ensuremath{\mathcal{Y}}}
\newcommand{\Ind}{\ensuremath{\mathbbm{1}}}
\newcommand{\EE}{\ensuremath{\mathbb{E}}}
\newcommand{\cO}{\ensuremath{\mathcal{O}}}
\newcommand{\Cu}{\ensuremath{\mathcal{Q}}}
\newcommand{\cS}{\ensuremath{\mathcal{S}}}
\newcommand{\cG}
{\ensuremath{\mathcal{G}}}
\newcommand{\cF}
{\ensuremath{\mathcal{F}}}
	\DeclareMathOperator{\sgn}{sign}
	\DeclareMathOperator{\supp}{supp}
	\DeclareMathOperator{\osc}{osc}
	\newcommand{\floor}[1]{\left\lfloor #1 \right\rfloor}
	\newcommand{\ceil}[1]{\left\lceil #1 \right\rceil}
\begin{document}
	
	\title[Summability methods for the greedy algorithm in Banach spaces]{Summability methods for the greedy algorithm in Banach spaces}%
	
	\author{M. Berasategui}
	\address{Miguel Berasategui, UBA-PAB I, Facultad de Ciencias Exactas y Naturales, Universidad de Buenos Aires, (1428), Buenos Aires, Argentina } 
	\email{mberasategui@dm.uba.ar}
	
	\author{P. M. Bern\'a}
	\address{Pablo M. Bern\'a, Departamento de Matemáticas, CUNEF Universidad, Madrid 28040, Spain} 
	\email{pablo.berna@cunef.edu}

		\author{S. J. Dilworth}
	\address{Stephen J. Dilworth, 	Department of Mathematics, University of South Carolina, Columbia, SC 29208, USA} 
	\email{dilworth@math.sc.edu}
	
	\author{D. Kutzarova}
	\address{Denka Kutzarova, Department of Mathematics, University of Illinois, Urbana IL 61801, USA} 
	\email{denka@illinois.edu}
	
% 		\author{T. Oikhberg}	\address{Timur Oikhberg, Department of Mathematics, University of Illinois, Urbana IL 61801, USA} 	\email{oikhberg@illinois.edu}

	\subjclass{41A65, 41A46, 46B15.}
	
	\keywords{greedy algorithm; greedy bases, quasi-greedy bases} 
	\thanks{The first author was supported by the Grants ANPCyT PICT 2018-04104 and CONICET PIP 11220200101609CO. The second author was supported by the Grant PID2022-142202NB-I00 (Agencia Estatal de Investigación, Spain). The third author was supported by Simons Foundation Collaboration Grant No. 849142.}
	
	\begin{abstract}
		{For the past 25 years, one of the most studied algorithms in the field of Nonlinear Approximation Theory has been the Thresholding Greedy Algorithm. In this paper, we propose new summability methods for this algorithm, generating two new types of greedy-like bases - namely  Cesàro quasi-greedy and de la Vallée-Poussin-quasi-greedy bases. We analyze the connection between these types of bases and the well-known quasi-greedy bases, and leave some open problems for future research.  In addition, as a consequence of our techniques for handling these summability methods, we answer a question posed by P. Wojtaszczyk in \cite{W}, by giving a categorial proof of equivalence between the uniform boundedness of the greedy sums and the convergence of the thresholding greedy algorithm.}
	\end{abstract}

	\maketitle
	
	\section{Introduction}
	
	For the past 25 years, an algorithm frequently studied in the field of Nonlinear Approximation Theory is the so-called \textit{Thresholding Greedy Algorithm} $(G_m)_{m\in\mathbb N}$ (TGA for short), where the basic idea is that, given an element of a space, the algorithm selects the largest coefficients of this element with respect to a given basis of the space. To be more precise, if $x\in\mathbb X$ with $\XX$ a Banach space and $\mathcal B=(\xx_n)_{n\in\mathbb N}$ is a basis, then 
$ G_m(f)=\sum_{n\in A_m(x)}\xx_n^*(x)\xx_n,$
where $A_m(x)$ is called a \textit{greedy set of $x$ with cardinality $m$} and verifies that 
$$\min_{i\in A_m(x)}\vert\xx_i^*(x)\vert\geq\max_{j\not\in A_m(x)}\vert\xx_j^*(x)\vert.$$

The researchers responsible for introducing this algorithm were V. N. Temlyakov and S. V. Konyagin in \cite{KT1999} in the year 1999. Since then, different researchers such as F. Albiac, J. L. Ansorena, S. J. Dilworth, N. J. Kalton, D. Kutzarova, T. Schlumprecht, P. Wojtaszczyk, etc., have shown interest in this algorithm and have produced several research papers analyzing different types of convergence and problems related to the TGA. 

One of the most important notions introduced in 1999 in \cite{KT1999} was the concept about quasi-greedy bases, where a basis is quasi-greedy when there is $C>0$ such that
$$\Vert G_m(x)\Vert\leq C\Vert x\Vert,\; \forall m\in\mathbb N, \forall x\in\mathbb X.$$
This notion has a connection with the convergence of the TGA and this was observed by P. Wojtaszczyk in \cite{Wo2000}, where he proved that a basis is quasi-greedy if and only if  
$$\lim_{m\rightarrow +\infty}\Vert x-G_m(x)\Vert=0,\; \forall x\in\mathbb X.$$

Thanks to this equivalence, it can be observed that the condition of being quasi-greedy is the weakest condition to ensure the convergence of the algorithm, since we know that the algorithm converges but nothing more; that is, we do not know how fast it converges or any other additional properties.

But what happens if the basis is not quasi-greedy? Is it possible to work with the TGA to guarantee a weaker form of convergence in a different sense? In Functional Analysis, \textit{summability methods} are techniques used to extend the concept of convergence for series and sequences, particularly when the series do not converge in the classical sense (such as pointwise or norm convergence). These methods are powerful tools for handling divergent series and assigning them a meaningful value for analysis. For instance, one classical method is the \textit{Cesàro summability}, which smooths out the partial sums of a series. Given a sequence of partial sums \( S_n \) of a series \( \sum a_n \), the Cesàro sum is defined by averaging the partial sums:
\[
\mathcal C_n := \frac{1}{n} \sum_{k=1}^{n} S_k.
\]
If the sequence $(\mathcal C_n)_n$ converges to a value \( L \), we say the series is Cesàro-summable to \( L \), even if the series does not converge in the usual sense.

In this paper, we focus our attention on the definition of Cesàro-type summability for the TGA and the study of the interaction between this new convergence with other classical types of convergence studied in the field of greedy approximation. The structure of the paper is as follows: in Section \ref{main}, we introduce the main definitions about greedy-like bases and the new ones based on Cesàro and de la Vallée-Poussin-summability: Cesàro-quasi-greedy and de la Vallé-Poussin-quasi-greedy bases. In Section \ref{characterization}, we characterize Cesàro-quasi-greedy and de la Vallée-Poussin-quasi-greedy bases following the spirit of the characterization of quasi-greediness as in \cite{Wo2000}. Section \ref{tech} is dedicated to prove some technical results used in the following sections. In Section \ref{subsequences}, we will study de la Vallée-Poussin-quasi-greedy bases through subsequences, following the generalization of greedy-like bases introduced by T. Oikhberg in \cite{O2018} and, finally, in Section \ref{relation}, we show some connections between Cesàro-quasi-greedy, de la Vallée-Poussin-quasi-greedy bases and some classical greedy-like bases. 

\section{Main definitions and Notation}\label{main}
As we have mentioned before, throughout this article, we assume that we are working with a Banach space $\mathbb{X}$ over $\FF$ ($\FF=\RR$ or $\FF=\CC$) endowed with a norm $\Vert \cdot \Vert$ and a  \textit{basis} $\XB=(\xx_n)_{n\in\mathbb N}$, that is, 
\begin{itemize}
	\item $\overline{\langle \xx_n : n\in\mathbb N\rangle}=\XX$;
	\item there exists a biorthogonal sequence $\XB^*=(\xx_n^*)_{n\in\NN}\subset \XX^*$.
\end{itemize}
$\XB^*$ is called the \textit{dual basis}. A basis is a \emph{Markushevich basis} if 
\begin{itemize}
	\item $	\xx_n^*\left(x\right)=0\;\forall n\in \NN\Longrightarrow x=0,$
\end{itemize}
and it is \emph{strong} (see \cite{AABW2021}) if, for every infinite subset  $A$ of $\NN$, 
\begin{align*}
	\left\lbrace x\in \XX: \xx_n^*\left(x\right)=0\;\forall n\in \NN\setminus A\right\rbrace=\overline{\langle \xx_n: n\in A  \rangle}.
\end{align*}

Working with bases, we can define the \textit{support} of $x\in\XX$ as the set
$$\text{supp}(x):=\lbrace n\in\mathbb N : \xx_n^*(x)\neq 0\rbrace.$$
Also, related to a basis $\XB$ in a Banach space $\XX$, we can define the following  quantities:  
\begin{align*}
	&\alpha_1=\alpha_1\left[\XB, \XX\right]:=\sup_{n\in \NN}\|\xx_n\|;\\
	&\alpha_2=\alpha_2\left[\XB, \XX\right]:=\sup_{n\in \NN}\|\xx_n^*\|;\\ &\alpha_3=\alpha_3\left[\XB, \XX\right]:=\sup_{n\in \NN}\|\xx_n\|\|\xx_n^*\|. 
\end{align*}
Unless otherwise stated, we assume that the $\alpha_j$ above are finite (equivalently, all of our bases and dual bases are seminormalized). 

	\subsection{Sets}
Given a set $A$, $\left\vert A\right\vert $,  denotes its cardinality, $\cP\left(A\right)$ denotes its power set; for each $n\in \NN$, $\cI_{n}$ denotes the set $\{1,\dots, n\}$.\\
Let $\XB=(\xx_n)_{n\in\mathbb N}$ be a basis of $\XX$, $x\in \XX$, and $m\in \NN_0$. By $\cG\left(x,m \right)$ we denote the set of greedy sets for $x$ of cardinality $m$, that is, sets $A\subset \NN$ with $\left\vert A\right\vert =m$ such that
\begin{align*}
	&\left\vert \xx_n^*\left(x\right)\right\vert\ge \left\vert \xx_k^*\left(x\right)\right\vert&&\forall n\in A\forall k\not\in A. 
\end{align*}

Given a Banach space $\XX$ over a field $\FF$, $\EE$ denotes the unit sphere of $\FF$, and we set $\kappa=1$ if $\FF=\RR$ and $\kappa=2$ if $\FF=\CC$. 

Given a basis $\XB$ for a Banach space $\XX$ with dual basis $\XB^*$, for each $x\in \XX$ we define $\varepsilon\left(x\right):=\left(\sgn\left(\xx_n^*\left(x\right)\right)\right)_{n\in \NN}$, where $\sgn(0)=1$ and $\sgn(a)=\frac{a}{|a|}$ for all $a\in \FF\setminus\{0\}$.

By $\NN^{<\infty}$ we denote the set of finite subsets of $\NN$. 
By $\NN^{(m)}$ we denote the set of subsets of $\NN$ of cardinality $m$, whereas by  $\NN^{\le m}$ we denote the set of subsets of $\NN$ of cardinality no greater than $m$. 
For each $k\in \NN$, $k\NN$ denotes the set of multiples of $k$. 
Given $A\subset \RR$ and $a\in \RR$, we note $A_{>a}:=\left\lbrace x\in A: x>a\right\rbrace$. We use similar notation for $\le$, $\ge$ and $<$.  
If $A, B\subset \RR$, we note $A<B$ if $a<b$ for all $a\in A$, $b\in B$. Also, by $m<B$ we mean $\{m\}<B$. We use similar notation for $\le$, $\ge$ and $>$.

Given $\XB$ a basis of $\XX$, for each $x\in \XX$ and each $1\le p\le \infty$, we note
\begin{align*}
	\left\Vert x\right\Vert_{\ell_{p}}:=&\left\Vert \left(\xx_n^*\left(x\right)\right)_{n\in \NN}\right\Vert_{\ell_{p}}, 
\end{align*}
where the value $\infty$ is allowed. 

Given $\XB$ a basis of $\XX$, $x\in \XX$ and a finite non-empty set $A\subset  \supp\left(x\right)$, the oscillation of $x$ on $A$ (see \cite{DOSZ2009}) is given by 
\begin{align*}
	\osc(x,A)=&\frac{\max_{n\in A}\left\vert \xx_n^*\left(x\right)\right\vert }{\min_{n\in A}\left\vert \xx_n^*\left(x\right)\right\vert}.
\end{align*}
We also set the convention $\osc\left(x, \emptyset\right):=1$. 

Given $\XB$ a basis of $\XX$, and $x, y\in \XX$, we write  $x  	\rhd y $ if 
\begin{align*}
	&\left\vert \xx_k^*\left(x\right)\right\vert \ge \left\vert \xx_n^*\left(y\right)\right\vert &&\forall k\in \supp\left(x\right),\forall n\in \NN. 
\end{align*}
We use the convention $\frac{b}{0}=\infty$ for all $b\in \FF\setminus \{0\}$. 
An absolute constant $C$ is a fixed positive real number (so, in particular, it does not depend on the basis or the space). 

\subsection{Unconditionality}
Unconditional bases are well known bases in the context of Functional Analysis. To define these bases, given $\XB=(\xx_n)_{n\in\mathbb N}$ a basis of $\XX$, a set of indices $A\in\mathbb N^{<\infty}$ and $x\in \XX$, we write the \textit{projection operator} $P_A$ as follows:
\begin{align*}
	&P_A[\XB,\XX](x)=P_A\left(x\right):=\sum_{n\in A}\xx_n^*\left(x\right)\xx_n; &&P_{A^c}\left(x\right):=x-P_A\left(x\right). 
\end{align*}
We say that $\XB$ is \textit{unconditional} if there is $K>0$ such that
$$\Vert P_A(x)\Vert\leq K\Vert x\Vert,\; \forall A\in \NN^{<\infty}, \forall x\in\mathbb X.$$
A weaker condition with an unconditionality-like flavor is nearly unconditionality, introduced by Elton in 1978 in the context of extraction of subsequences with ``good'' behavior from normalized weakly null sequences \cite{Elton1978}.
Let $\XB=\left(\xx_n\right)_{n\in \NN}$ be a basis of a Banach space $\XX$. We define
	\begin{align*}
		&\Cu=\Cu[\XB,\XX]=\left\lbrace x\in \XX \colon \sup_n\left\vert \xx_n^*\left(x\right)\right\vert \le 1\right\rbrace;\\
		& \Cu_0=\Cu_0[\XB,\XX]=\left\lbrace x\in \Cu: \left\vert\xx_n^*\left(x\right)\right\vert\not= \left\vert \xx_j^*\left(x\right)\right\vert  \forall n,j\in \supp\left(x\right): n\not=j\right\rbrace.  
	\end{align*}
Here and in similar cases, we may leave the basis implicit when there is no ambiguity.

For $x\in \Cu$ and $0<t\le 1$, we set
	\begin{align*}
		A\left(x,t\right):=&\left\lbrace n\in \NN: \left\vert \xx_n^*\left(x\right)\right\vert \ge t\right\rbrace. 
	\end{align*}

\begin{defi}
		A basis $\XB$ is {nearly unconditional} if, for each $t\in(0,1]$ there is a positive constant $C_t$ such that for any $f\in \Cu$,
	\begin{equation}\label{nearlyuncdef}
		\Vert P_A(x)\Vert\le C_t\Vert x\Vert,
	\end{equation}
	whenever $A\subseteq A\left(x,t\right)$. The near unconditionality function of the basis $\phi: (0,1]\rightarrow [1,+\infty)$ is defined, for each $t$, as the minimum $C_t$ for which \eqref{nearlyuncdef} holds. 
\end{defi}

	 \subsection{The TGA and Cesàro-summability}

	 Given a basis $\XB$ for $\XX$ and $x\in \XX$, a sequence $\cO=\cO\left(x\right):=\left(k_j\right)_{j\in \NN}$ of positive integers is a \textit{greedy ordering} for $x$ if
	\begin{align*}
		\left\lbrace k_j: 1\le j\le m\right\rbrace\in \cG\left(x,m\right)\qquad\forall m\in \NN. 
	\end{align*}

	 Given a basis $\XB$ for $\XX$, $x\in\XX$ and a greedy ordering $\cO\left(x\right)=\left(k_j\right)_{j\in \NN}$, $G_n^{\cO}\left(x\right)$ denotes the $n$-greedy sum for $x$ with respect to $\cO\left(x\right)$, that is
	\begin{align*}
		G_n^{\cO}\left(x\right)=&\sum_{j=1}^{n}\xx_{k_j}^{*}\left(x\right)\xx_{k_j}, 
	\end{align*}
	whereas $C^{g,\cO}_n\left(x\right)$ denotes the $n$-greedy Cesaro sum for $x$ with respect to $\cO$, that is
	$$
	C^{g,\cO}_n\left(x\right)=\sum_{j=1}^{n}\left(\frac{n+1-j}{n}\right)\xx_{k_j}^*\left(x\right)\xx_{k_j}. 
	$$
	
	 Given a basis $\XB$ for $\XX$, $x\in\XX$ and a greedy ordering $\cO\left(x\right)=\left(k_j\right)_{j\in \NN}$, $\cO_m\left(x\right)$ denotes the greedy ordering $\left(k_{j+m}\right)_{j\in \NN}$ for $x-G_m^{\cO\left(x\right)}\left(x\right)$. 
	\begin{defi}[\cite{KT1999}]\label{definitionQG}
		Given $\XB$ a basis of $\XX$ and $C>0$, we say that $\XB$ is $C$-quasi-greedy if
		\begin{align}
			\left\Vert P_A\left(x\right)\right\Vert\le C\left\Vert x\right\Vert\label{QGconstant}
		\end{align}
		for all $x\in \XX$, $m\in \NN$, and $A\in \cG\left(x,m\right)$. 
		
		Given $\XB$ a basis of $\XX$ and $C>0$, we say that $\XB$ is $C$-suppression quasi-greedy if
		\begin{align*}
			\left\Vert x- P_A\left(x\right)\right\Vert\le C\left\Vert x\right\Vert
		\end{align*}
		for all $x\in \XX$, $m\in \NN$, and $A\in \cG\left(x,m\right)$. 
	\end{defi}
	 
	\begin{defi}
		 Given $\XB$ a basis of $\XX$ and $C>0$, we say that $\XB$ is $C$-Cesaro quasi-greedy ($C$-CQG) if 
		\begin{align*}
			\left\Vert C^{g,\cO}_n\left(x\right)\right\Vert\le C\left\Vert x\right\Vert
		\end{align*}
		whenever $x\in \XX$, $n\in \NN$, and $\cO$ is a  greedy ordering for $x$.
	\end{defi}
	
	\begin{defi}
		 Given $\XB$ a basis and $C>0$, we say that $\XB$ is $C$-de la Vallée-Poussin quasi-greedy ($C$-VPQG) if 
		\begin{align*}
			\left\Vert 2C^{g,\cO}_{2n}\left(x\right)- C^{g,\cO}_n\left(x\right)\right\Vert\le C\left\Vert x\right\Vert
		\end{align*}
		whenever $x\in \XX$, $n\in \NN$, and $\cO$ is a  greedy ordering for $x$. 
	\end{defi}
	 	 
	 The notion of \emph{thresholding bounded} bases was introduced in \cite{DKK2003}. In a sense, it is related to quasi-greediness in the way  near unconditionality is related to unconditionality. 
	 \begin{defi}
	 A basis $\XB$ is {thresholding bounded} if, for each $t\in(0,1]$ there is a positive constant $C_t$ such that for any $f\in \Cu$,
	\begin{equation}\label{thresholdingboundeddef}
		\Vert P_{A\left(x,t\right)}\left(x\right)\Vert\le C_t\Vert x\Vert,
	\end{equation}
	The thresholding boundedness  function of the basis $\theta: (0,1]\rightarrow [1,+\infty)$ is defined, for each $t$, as the minimum $C_t$ for which \eqref{thresholdingboundeddef} holds. 
\end{defi}

\begin{remark}\rm It was proven in \cite{DKK2003} that nearly unconditionality and thresholding boundedness are equivalent notions - unlike unconditionality and quasi-greediness, see \cite{KT1999}.
\end{remark}

	Also, we need the following observation.

	\begin{remark}\rm \label{remarkVPQG}If $\XB$ is a basis of $\XX$, $x\in \XX$, $l, n, m\in \NN$ and $\cO=\left(k_j\right)_{j\in \NN}$ is a greedy ordering for $x$,
		\begin{align}
			&2C_{2n}^{g,\cO}\left(x\right)-C_{n}^{g,\cO}\left(x\right)\nonumber\\
			&= \sum_{j=1}^{2n}\left(\frac{2n+1-j}{n}\right)\xx_{k_j}^*\left(x\right)\xx_{k_j}-\sum_{j=1}^{n}\left(\frac{n+1-j}{n}\right)\xx_{k_j}^*\left(x\right)\xx_{k_j}\nonumber\\
			&=G_{n}^{\cO}\left(x\right)+\sum_{j=n+1}^{2n}\left(\frac{2n+1-j}{n}\right)\xx_{k_j}^*\left(x\right)\xx_{k_j}\nonumber\\&=G_{n}^{\cO}\left(x\right)+\sum_{j=1}^{n}\left(\frac{n+1-j}{n}\right)\xx_{k_{n+j}}^*\left(x\right)\xx_{k_{n+j}}. \label{VPQGb2}
			%=&G_{n}^{\cO}\left(x\right)+C_n^{\cO_n}\left(x-G_n^{\cO}\left(x\right)\right).\label{VPQGb3}
		\end{align}
	\end{remark}
	
	\begin{remark}\label{remarknormalized} Note that if $\XB$ is a basis of $\XX$ and we define a norm 
		\begin{align*}
			\left\Vert x\right\Vert_{\circ}:=&\max\left\lbrace \frac{\left\Vert x\right\Vert}{\alpha_1}, \left\Vert x\right\Vert_{\ell_{\infty}}\right\rbrace,
		\end{align*}
		then,
		\begin{itemize}
			\item for every $x\in\mathbb X$,
			\begin{align*}
				\frac{1}{\alpha_1}\left\Vert x\right\Vert\le   \left\Vert x\right\Vert_{\circ}\le& \alpha_2\left\Vert x\right\Vert.
			\end{align*}
			\item Both $\XB$ and $\XB^*$ are normalized with respect to $\left\Vert \cdot\right\Vert_{\circ}$. 
			\item If $\XB$ is $K$-QG, CQG or VPQG with respect to $\left\Vert \cdot\right\Vert$ the same holds for $\left\Vert \cdot\right\Vert_{\circ}$.
		\end{itemize}
	\end{remark}

\subsection{Indicator sums}
	 One object of interest in the context of greedy approximation is the indicator sum, which is used to define several properties that are useful for the study of greedy-like bases. 
	 
	 Given $\XB$ a basis of $\XX$, $A\subset \NN$, $B\subset A$, with $\left\vert B\right\vert <\infty$, and $\varepsilon\in \EE^{A}$, we define
	\begin{eqnarray*}
		\Ind_{\varepsilon, B}[\XB,\XX]=\Ind_{\varepsilon,B}:=&\sum_{n\in B}\varepsilon_n\xx_n,
	\end{eqnarray*}
	where the sum is zero if $B=\emptyset$. 
	
	\begin{defi}
		 A basis $\XB$ is  $C$-suppression unconditional ($C$ always denotes a positive real number) for constant coefficients (SUCC) if 
		$$
		\|\Ind_{\varepsilon,B}\|\le C\|\Ind_{\varepsilon,A}\|
		$$
		for all sets $A,B\in\mathbb N^{<\infty}$ such that $B\subset A$ and all $\varepsilon\in \EE^{A}$.
	\end{defi}
	
We will consider the following three weak forms of partial unconditionality.  
	
	\begin{defi}
		A basis $\XB$ is  $C$-unconditional for constant coefficients (UCC) if 
		$$
		\|\Ind_{\varepsilon,A}\|\le C\|\Ind_{\varepsilon',A}\|
		$$
		for all  $A\in \mathbb N^{<\infty}$ and all $\varepsilon, \varepsilon'\in \EE^{A}$.
	\end{defi}

	\begin{defi}
		 A basis $\XB$ is  $C$-quasi-greedy for largest coefficients (QGLC) if 
		$$
		\|\Ind_{\varepsilon,A}\|\le C\|\Ind_{\varepsilon,A}+x\|
		$$
		for all  $A\in\mathbb N^{<\infty}$, all $\varepsilon\in \EE^{A}$ and all $x\in \Cu$ with $\supp\left(x\right)\cap A=\emptyset$. 
	\end{defi}
	
	\begin{defi}
		 A basis is $C$-truncation quasi-greedy (TQG) if 
		\begin{align*}
			\min_{n\in A}\left\vert\xx_n^*\left(x\right)\right\vert \Vert \Ind_{\varepsilon\left(x\right), A}\Vert\le& C\left\Vert x\right\Vert
		\end{align*}
		for all $x\in \XX$, $m\in \NN$, $A\in \cG\left(x,m\right)$. 
	\end{defi}
	
Unconditionality for constant coefficients was first defined in \cite{Wo2000}, where it was proved that all quasi-greedy bases have this property. Suppression unconditionality for constant coefficients is an equivalent property introduced in \cite{AABW2021}. On the other hand, quasi-greediness for largest coefficients is a stronger property also introduced in \cite{AABW2021}, which turned out to be equivalent to thresholding boundedness and near unconditionality \cite{AAB2023a}. In Section~\ref{tech}, we will use the latter equivalence in prove that the greedy-like bases we introduce are thresholding bounded.

\section{Caracterization of CQG and VPQG bases}\label{characterization}

In this section, we prove characterizations of Cesaro-quasi-greedy and de la Vallée-Poussin quasi-greedy bases in terms of convergence and pointwise boundedness, along the lines of similar characterizations for quasi-greedy bases proved in \cite{AABW2021} and \cite{Wo2000}. In fact, we will obtain these characterizations from more general results, which may be of use for the study of other variants of greedy-like approximation. We begin with an auxiliary lemma that allows us to go from pointwise boundedness to uniform boundedness. 

\begin{lemma}\label{lemmapointwisebounded}Let $\XB$ be a basis for a Banach space $\XX$, let $A\subset \NN$ be a nonempty set, $K_1$ a positive constant, and $\left(t_n\right)_{n\in \NN_0}$ a sequence of positive numbers. For each $m\in A$, let $\left(b_{m,j}\right)_{1\le j\le m}$ be scalars such that
	\begin{align*}
		&\left\vert b_{m,j}- b_{m,i}\right\vert \le \frac{t_{j-i}}{m}&&\forall 1\le i \le j\le m;\\
		&\left\vert b_{m,j}\right\vert\le K_1 &&\forall 1\le j\le m.
	\end{align*}
	Suppose that for each $x\in \XX$ there is $c_x>0$ such that, for each $m\in A$ there is a greedy ordering $\cO\left(x,m\right)=\left(k_j\right)_{j\in \NN}$ for $x$ for which we have the following bound: 
	\begin{align}
		&\left\Vert \sum_{j=1}^{m}b_{m,j}\xx_{k_j}^*\left(x\right)\xx_{k_j}\right\Vert\le c_x. \label{pointwisebound}
	\end{align}
	Then there is $K_2>0$ such that for every $x\in \XX$, every greedy ordering $\cO\left(x\right)=\left(k_j\right)_{j\in \NN}$  and every $m\in A$, 
	\begin{align}
		&\left\Vert \sum_{j=1}^{m}b_{m,j}\xx_{k_j}^*\left(x\right)\xx_{k_j}\right\Vert\le K_2\left\Vert x\right\Vert. \label{cqgetc}
	\end{align}
\end{lemma}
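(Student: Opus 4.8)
The plan is to pass from the pointwise bound \eqref{pointwisebound} to a uniform one by a Baire category argument, to transfer the resulting local estimate into one of the form $\norm{\cdot}\le K\norm{x}$ by a disjoint--support localization that uses the greedy structure, and finally to upgrade from ``some greedy ordering'' to ``every greedy ordering'' by a perturbation. By Remark~\ref{remarknormalized} we may assume $\XB,\XB^*$ normalized, so $\alpha_1=\alpha_2=1$. Since $\XB$ is a fundamental $\alpha_2$-bounded system, $\xx_n^*(x)\to0$ for every $x\in\XX$; hence $\{n:\abs{\xx_n^*(x)}\ge t\}$ is finite for each $t>0$, and for fixed $m$ the quantity $\norm{\sum_{j=1}^m b_{m,j}\xx_{k_j}^*(x)\xx_{k_j}}$ takes only finitely many values over the greedy orderings of $x$ (it is always $\le mK_1\norm{x}$, which is useless once $m$ is unbounded).

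\smallskip
\noindent\textbf{Step 1 (Baire).}
For $N\in\NN$ let $\cU_N$ be the set of $x\in\XX$ such that for every $m\in A$ there is a greedy ordering $\cO=(k_j)$ for $x$ with $\norm{\sum_{j=1}^m b_{m,j}\xx_{k_j}^*(x)\xx_{k_j}}\le N$. Positive homogeneity of these sums and the coincidence of the greedy orderings of $x$ and $\lambda x$ show that each $\cU_N$ is balanced and symmetric; a routine limiting argument, using the uniform convergence of coefficients (valid since $\alpha_2=1$) and the fact that for fixed $m$ the witnessing leading blocks $(k_1,\dots,k_m)$ eventually take values in a fixed finite set, shows that each $\cU_N$ is closed. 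By \eqref{pointwisebound}, $\XX=\bigcup_N\cU_N$, so Baire's theorem gives some $\cU_{N_0}$ with nonempty interior. Approximating an interior point by a finitely supported vector, fix a finitely supported $v_0$ with $\supp(v_0)=I$, $\abs{I}=s$, all coefficients on $I$ nonzero, $\mu_0:=\min_{n\in I}\abs{\xx_n^*(v_0)}>0$, and $\{y:\norm{y-v_0}\le r_0\}\subseteq\cU_{N_0}$ for some $r_0>0$ (enlarging $I$ by a dummy index if needed so that $s$ fits the arithmetic of $A$).

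\smallskip
\noindent\textbf{Step 2 (localization/transfer).}
Let $u\in\XX$ have $\supp(u)\cap I=\emptyset$ and pairwise distinct nonzero coefficient magnitudes, and rescale it so that $\norm{u}\le r_0$ and $\norm{u}_{\ell_\infty}<\mu_0$. Then the $s$ largest coefficients of $v_0+u$ are exactly those on $I$, so every greedy ordering of $v_0+u$ has the form $(e_1,\dots,e_s,p_1,p_2,\dots)$ with $(p_i)_i$ a greedy ordering of $u$; by genericity this ordering is essentially unique and $v_0+u\in\cU_{N_0}$, hence $\norm{\sum_{j=1}^m b_{m,j}\xx_{k_j}^*(v_0+u)\xx_{k_j}}\le N_0$ for every $m\in A$. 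Choosing $m=s+m'\in A$ with $m'\le\abs{\supp(u)}$ and subtracting the first $s$ terms (supported on $I$, of norm $\le K_1\norm{v_0}_{\ell_1}$) gives $\norm{\sum_{i=1}^{m'}b_{s+m',\,s+i}\,\xx_{p_i}^*(u)\xx_{p_i}}\le N_0+K_1\norm{v_0}_{\ell_1}$. Now one passes from the shifted weights $b_{s+m',s+i}$ to $b_{m',i}$ using the oscillation hypothesis $\abs{b_{s+m',s+i}-b_{s+m',i}}\le t_s/(s+m')$ together with $\abs{b_{m,j}}\le K_1$ (this passage is transparent in the applications, where $b_{m,1}$ is independent of $m$), obtaining $\norm{\sum_{i=1}^{m'}b_{m',i}\xx_{p_i}^*(u)\xx_{p_i}}\le K_3\norm{u}$ for all such $m'$, with $K_3$ depending only on $N_0,K_1,\norm{v_0}_{\ell_1},r_0,\mu_0,s$ and $(t_n)$. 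The leftover cases are routine: $m'>\abs{\supp(u)}$ by padding $u$ with finitely many negligible high--index coefficients; and a general $x$ by splitting $x=P_I(x)+P_{I^c}(x)$ and bounding, again by the oscillation hypothesis, the difference between the $b_{m',\cdot}$-sum of $x$ and that of $P_{I^c}(x)$ along the greedy ordering of $P_{I^c}(x)$ obtained by deleting the at most $s$ indices of $\supp(x)\cap I$ (each remaining index moves by at most $s$ positions). Hence there is $K_4>0$ so that for every $x$ and $m\in A$ \emph{some} greedy ordering $\cO$ for $x$ satisfies $\norm{\sum_{j=1}^m b_{m,j}\xx_{k_j}^*(x)\xx_{k_j}}\le K_4\norm{x}$.

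\smallskip
\noindent\textbf{Step 3 (all orderings) and the main difficulty.}
Given $x$, $m\in A$ and an arbitrary greedy ordering $\widetilde{\cO}=(\widetilde k_j)$ for $x$, put $x^{(\gamma)}:=x+\gamma\sum_{j=1}^m(m+1-j)\sgn(\xx_{\widetilde k_j}^*(x))\xx_{\widetilde k_j}$. As $\widetilde{\cO}$ is greedy, $(\abs{\xx_{\widetilde k_j}^*(x)})_{j\le m}$ is nonincreasing and dominates $\max_{n\notin\{\widetilde k_1,\dots,\widetilde k_m\}}\abs{\xx_n^*(x)}$, so for small $\gamma>0$ the first $m$ coefficients of $x^{(\gamma)}$, in the order $\widetilde k_1,\dots,\widetilde k_m$, are strictly decreasing and strictly larger than all the others; hence $(\widetilde k_1,\dots,\widetilde k_m)$ is the leading block of \emph{every} greedy ordering of $x^{(\gamma)}$. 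Applying Step 2 to $x^{(\gamma)}$ and letting $\gamma\to0$ gives $\norm{\sum_{j=1}^m b_{m,j}\xx_{\widetilde k_j}^*(x)\xx_{\widetilde k_j}}\le K_4\norm{x}$, which is \eqref{cqgetc} with $K_2=K_4$. The delicate point is Step 2: a positively homogeneous map bounded on a ball need not satisfy $\norm{F(x)}\le K\norm{x}$, so turning the Baire ball into a homogeneous estimate genuinely requires the greedy mechanism (placing a rescaled high--index copy of $x$ beneath $v_0$), and the resulting shift of the weights by $s=\abs{\supp(v_0)}$ positions has to be absorbed through the hypothesis $\abs{b_{m,j}-b_{m,i}}\le t_{\abs{j-i}}/m$.
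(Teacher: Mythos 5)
Your proof follows the same architecture as the paper's: a Baire-category argument on the closed sets $\cU_N=\{x:\text{pointwise constant }\le N\}$, then a ``localization'' step that places a small vector with high-index support behind a fixed finitely supported $v_0$, uses the hypothesis at $m\in A$ on $v_0+u$, and absorbs the resulting index shift by $s=\abs{\supp(v_0)}$ via the oscillation condition, and finally a perturbation $x^{(\gamma)}$ to upgrade ``some ordering'' to ``every ordering''. (The paper folds this last step into the main argument by choosing $y_\epsilon$ so that its greedy ordering is forced, but the content is identical.) However, two places in your write-up contain genuine gaps as written.

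First, in Step 1 you justify closedness of $\cU_N$ by asserting that ``for fixed $m$ the witnessing leading blocks $(k_1,\dots,k_m)$ eventually take values in a fixed finite set.'' This is false in the case that the $m$-th greedy coefficient of the limit $x$ is zero: then $\supp(x)$ has fewer than $m$ elements, and the tail of the length-$m$ leading block of a nearby $x_n$ can wander arbitrarily. The paper handles exactly this (Case 2) by noting that such tail terms are $O(\norm{x-x_n})$ and hence vanish in the limit; your sketch needs the same case split.

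Second, and more seriously, in Step 2 you apply the hypothesis at $m=s+m'\in A$, obtain a bound on $\sum_{i=1}^{m'}b_{s+m',s+i}\xx_{p_i}^*(u)\xx_{p_i}$, and then claim to ``pass from $b_{s+m',s+i}$ to $b_{m',i}$'' using $\abs{b_{s+m',s+i}-b_{s+m',i}}\le t_s/(s+m')$ together with $\abs{b_{m,j}}\le K_1$. The first reduction (shifting the second index by $s$) is exactly what the oscillation hypothesis gives, but the second change — replacing $b_{s+m',i}$ by $b_{m',i}$ — compares coefficients with \emph{different first index}, which is not controlled by any hypothesis of the lemma; the only available term-by-term bound, $\abs{b_{s+m',i}-b_{m',i}}\le 2K_1$, gives a useless $O(m')$ after summation. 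You yourself flag this as ``transparent in the applications, where $b_{m,1}$ is independent of $m$,'' but the lemma is stated for general $(b_{m,j})$ and this step does not follow from its hypotheses. Fortunately the detour is unnecessary: the quantity you must bound is the $m$-sum with coefficients $b_{m,j}$ for the \emph{same} $m\in A$ you applied the hypothesis at; you only need to move the second index back by $s$, i.e. to go from $\sum_{i\le m-s}b_{m,s+i}\xx_{p_i}^*(u)\xx_{p_i}$ to $\sum_{i\le m}b_{m,i}\xx_{p_i}^*(u)\xx_{p_i}$, and this costs at most $t_s\alpha_3\norm{u}$ from the oscillation bound plus $sK_1\alpha_3\norm{u}$ for the final $s$ terms, exactly as the paper computes. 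With that correction your argument closes; as written there is a real gap.
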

\begin{proof}
First note that for each $x\in \XX$, we may take $c_x$ to be the minimum positive number for which \eqref{pointwisebound} holds. Now for each $l\in \NN$, let 
	\begin{align*}
		\cF_l:=&\left\lbrace x\in \XX: c_x\le l\right\rbrace. 
	\end{align*}
	Let us prove that $\cF_l$ is a closed set: Fix $x\in \overline{\cF_l}$, and let $\cO=\left(k_j\right)_{j\in \NN}$ be a greedy ordering for $x$. We may assume $\supp\left(x\right)\not=\emptyset$ (else, $c_x=0< l$). Given $j_0\in A$, we consider two cases: 
	\begin{enumerate}
		\item[Case 1:] If $\left\vert \xx_{k_{j_0}}^*\left(x\right)\right\vert>0$, pick $j_1>j_0$ and $\epsilon_1>0$ so that $\left\vert \xx_{k_{j_0}}^*\left(x\right)\right\vert>2\epsilon_1+\left\vert \xx_{k_{j_1}}^*\left(x\right)\right\vert$. Note that $\cG\left(x,j_0,1\right)\subset \cP\left(B\right)$, where $B:=\left\lbrace k_{j}: 1\le j\le j_1\right\rbrace$. Now let $\left(x_n\right)_{n\in \NN}\subset \cF_l$ be a sequence that converges to $x$, and choose $n_1\in \NN$ so that 
		\begin{eqnarray*}
			\left\Vert x-x_n\right\Vert\le \frac{\epsilon_1}{\left(1+\alpha_2\right)}\forall n\ge n_1. 
		\end{eqnarray*}
		Then $\left\vert \xx_j^*\left(x-x_n\right)\right\vert< \epsilon_1$ for all $n\ge n_1$ and all $j\in \NN$, so in particular, 
		\begin{align}
			&\cG\left(x_n,j_0,1\right)\subset \cP\left(B\right) &&\forall n\ge n_1. \label{boundedgreedysets}
		\end{align}
		For each $n\in \NN$, let $\cO\left(x_n,j_0\right)=\left(k_{n,j_0,j}\right)_{j\in \NN}$ be a greedy ordering for $x_n$ such that 
		\begin{align*}
			\left\Vert \sum_{j=1}^{j_0} b_{j_0,j}\xx_{k_{n,j_0,j}}^*\left(x_n\right)\xx_{k_{n,j_0,j}}\right\Vert\le l. 
		\end{align*}
		Then by \eqref{boundedgreedysets}
		\begin{align*}
			&\left\lbrace k_{n,j_0,j}: 1\le j\le j_0\right\rbrace\subset B&&\forall n\ge n_1.
		\end{align*}
		Since $B$ is a finite set, passing to a subsequence we may assume that 
		\begin{align*}
			&k_{n,j_0,j}=k_{l,j_0,j}&&\forall n,l\in \NN\forall 1\le j\le j_0. 
		\end{align*}
		Let us denote that unique value by $k_{j_0,j}$. It follows by the convergence of $\left(x_n\right)_{n\in \NN}$ to $x$ that
		\begin{align}
			\left\Vert \sum_{j=1}^{j_0} b_{j_0,j}\xx_{k_{j_0,j}}^*\left(x\right)\xx_{k_{j_0,j}}\right\Vert\le l, \nonumber%\label{closed1}
		\end{align}
		and that $\left\lbrace k_{j_0,j}: 1\le j\le i\right\rbrace\in \cG\left(x,i,1\right)$ for all $1\le i\le j_0$. Thus, we can complete $\left(k_{j_0,j}\right)_{1\le j\le j_0}$ to a greedy ordering for $x$, and  it follows that $x\in \cF_l$.\\
		\item[Case 2.] If $\left\vert \xx_{k_{j_0}}^*\left(x\right)\right\vert=0$, let $B=\supp\left(x\right)$. Since $B$ is finite and not empty, it follows that $b:=\min_{n\in B}\left\vert \xx_{n}^*\left(x\right)\right\vert>0$. Let $j_1:=\left\vert B\right\vert$, let $\left(x_n\right)_{n\in \NN}\subset \cF_l$ be a sequence that converges to $x$, and choose $n_1\in \NN$ so that
		\begin{align*}
			&\left\Vert x-x_n\right\Vert\le \frac{b}{2\left(1+\alpha_2\right)}&&\forall n\ge n_1. 
		\end{align*}
		Then $\left\vert \xx_j^*\left(x-x_n\right)\right\vert< \frac{b}{2} $ for all $n\ge n_1$ and all $j\in\NN$, so in particular, 
		\begin{align}
			&\cG\left(x_n,j_1,1\right)=\left\lbrace B\right\rbrace&&\forall n\ge n_1. \label{boundedgreedysets2}
		\end{align}
		For each $n\in \NN$, let $\cO\left(x_n,j_0\right)=\left(k_{n,j_0,j}\right)_{j\in \NN}$ be a greedy ordering for $x_n$ such that 
		\begin{align*}
			\left\Vert \sum_{j=1}^{j_0} b_{j_0,j}\xx_{k_{n,j_0,j}}^*\left(x\right)\xx_{k_{n,j_0,j}}\right\Vert\le l. 
		\end{align*}
		Then by \eqref{boundedgreedysets2}, 
		\begin{align*}
			&\left\lbrace k_{n,j_0,j}: 1\le j\le j_1\right\rbrace= B&&\forall n\ge n_1. 
		\end{align*}
		As before, since $B$ is a finite set we may assume that 
		\begin{align*}
			&k_{n,j_0,j}=k_{l,j_0,j}&&\forall n,l\in \NN\forall 1\le j\le j_1. 
		\end{align*}
		For each $1\le j\le j_1$, let $k_{j_0,j}$ be that unique value. It follows by the convergence of $\left(x_n\right)_{n\in\NN}$ to $x$ that $\left\lbrace k_{j_0,j}: 1\le j\le i\right\rbrace\in \cG\left(x,i,1\right)$ for all $1\le i\le j_1$. Since $B=\supp\left(x\right)$, we can complete $\left(k_{j_0,j}\right)_{1\le j\le j_1}$ to a reordering of $\NN$ in an arbitrary manner and we always obtain a greedy ordering for $x$. Thus, to complete the proof that $x\in \cF_l$, we only need to show that
		\begin{align*}
			\left\Vert \sum_{j=1}^{j_1} b_{j_0,j}\xx_{k_{j_0,j}}^*\left(x\right)\xx_{k_{j_0,j}}\right\Vert\le l. 
		\end{align*}
		This follows from the choice of the $x_n$'s, the triangle inequality and convergence: Considering only $n\ge n_1$ we obtain
		\begin{align*}
			&\left\Vert \sum_{j=1}^{j_1}b_{j_0,j}\xx_{k_{j_0,j}}^*\left(x\right)\xx_{k_{j_0,j}}\right\Vert\underset{j_0\ge j_1}{=}\left\Vert \sum_{j=1}^{j_0} b_{j_0,j}\xx_{k_{n,j_0,j}}^*\left(x\right)\xx_{k_{n,j_0,j}}\right\Vert\\
			&\le \left\Vert \sum_{j=1}^{j_0} b_{j_0,j}\xx_{k_{n,j_0,j}}^*\left(x_n\right)\xx_{k_{n,j_0,j}}\right\Vert+\left\Vert \sum_{j=1}^{j_0}b_{j_0,j}\xx_{k_{j_0,j}}^*\left(x-x_n\right)\xx_{k_{j_0,j}}\right\Vert\\
			&\le l+j_0\alpha_3K_1\left\Vert x-x_n \right\Vert \xrightarrow[\substack{n\to +\infty}]{}l. 
		\end{align*}
	\end{enumerate}
	Since $\cF_l$ is closed for each $l\in \NN$ and $\XX=\bigcup_{l\in \NN}\cF_l$, there is $l_0\in \NN$ with nonempty interior. Using the density of $\langle \XB\rangle$ in $\XX$, we can find $n_0\in \NN$, $x_0\in \langle \xx_n: 1\le n\le n_0\rangle$ and $\epsilon_0>0$ such that the open ball $B\left(x_0,\epsilon_0\right)$ is contained in $\cF_{l_0}$. We may assume by a small perturbation argument that $\supp\left(x_0\right)=\left\lbrace
	\xx_n: 1\le n\le n_0\right\rbrace$ and that 
	\begin{align}
		&\left\vert \xx_j^*\left(x_0\right)\right\vert\not= \left\vert \xx_i^*\left(x_0\right)\right\vert&&\forall 1\le j,i\le n_0: i\not=j. \label{notthesamemodulus}
	\end{align} 
	Let $\XX_0:=\overline{\langle \xx_n: n>n_0\rangle}$. We will first find $K_2>0$ so that \eqref{cqgetc} holds for $x\in \XX_0$.  To this end, first set 
	\begin{align*}
		a_0:=&\min_{1\le n\le n_0}\left\vert \xx_n^*\left(x_0\right)\right\vert;\\
		a_1:=&\left\Vert x_0\right\Vert.
	\end{align*}
	Choose $0<\epsilon_1<\frac{1}{4}\min\left(\epsilon_0, 1, \frac{a_0}{\left(1+\alpha_2\right)}\right)$, and choose $x\in \XX_0$ with $\left\Vert x\right\Vert\le \epsilon_1$. Fix $j_0\in A$ and $\cO=\left(k_j\right)_{j\in \NN}$ a greedy ordering for $x$. We claim that in order to estimate the norm of $\sum_{j=1}^{j_0} b_{j_0,j} \xx_{k_j}^*\left(x\right)\xx_{k_j}$,  we may assume that $k_j>n_0$ for all $1\le j\le j_0$: indeed, if $1\le k_{j_1}\le n_0$ for some $1\le j_1\le j_0$, since $\xx_{k_{j_1}}^*\left(x\right)=0$ it follows that $\left\vert \supp\left(x\right)\right\vert\le j_0$. Thus, there is $i>j_0$ such that $k_i>n_0$ and $k_i\not\in \supp\left(x\right)$. Hence, we define $k'_i=k_{j_1}$ and $k'_{j_1}=k_i$ leaving the rest of the ordering unchanged, and we still have a greedy ordering for $x$, say $\cO'$, such that $$\sum_{j=1}^{j_0} b_{j_0,j} \xx_{k_j}^*\left(x\right)\xx_{k_j}=\sum_{j=1}^{j_0} b_{j_0,j} \xx_{k_j'}^*\left(x\right)\xx_{k_j'}. $$
Then the proof of the claim is completed by iterating the above procedure if necessary. \\	
	Now, if $j_0\le n_0$, then 
	\begin{align}
		&\left\Vert \sum_{j=1}^{j_0} b_{j_0,j} \xx_{k_j}^*\left(x\right)\xx_{k_j}\right\Vert \le K_1n_0\alpha_3\left\Vert x\right\Vert.  \label{smallsupport10}
	\end{align}
	If $j_0>n_0$, 	pick $0<\epsilon<\epsilon_1$ and $y_{\epsilon}\in \XX_0$ so that 
	\begin{align}
		&\left\Vert x-y_{\epsilon}\right\Vert\le \epsilon;\nonumber\\
&\supp\left(x-y_{\epsilon}\right)=\left\lbrace k_1,\dots, k_{j_0}\right\rbrace;\\		
		&\left\vert \xx_{k_j}^*\left(y_{\epsilon}\right)\right\vert> \left\vert \xx_{k_j}^*\left(x\right)\right\vert &&\forall 1\le j\le j_0;\nonumber\\
		&\cG\left(y_{\epsilon},i,1\right)=\left\lbrace \left\lbrace k_{j}: 1\le j\le i\right\rbrace \right\rbrace &&\forall 1\le i\le j_0.\label{onlyoneset}
	\end{align}
	Note that $x_0+y_{\epsilon}\in \cF_{l_0}$, that $\cO$ is a greedy ordering for $y_{\epsilon}$, and the following hold: 
	\begin{align}
		&\left\vert \xx_n^*\left(x_0+y_{\epsilon}\right)\right\vert=\left\vert \xx_n^*\left(x_0\right)\right\vert>\frac{a_0}{2}\ge \left\vert \xx_i^*\left(y_{\epsilon}\right)\right\vert=\left\vert \xx_i^*\left(x_0+y_{\epsilon}\right)\right\vert &&\forall 1\le n\le n_0\forall i>n_0;\label{firstn0}\\
		&\left\vert \xx_{k_{j}}^*\left(x_0+y_{\epsilon}\right)\right\vert=\left\vert \xx_{k_{j}}^*\left(y_{\epsilon}\right)\right\vert>\left\vert \xx_{k_{j+1}}^*\left(y_{\epsilon}\right)\right\vert=\left\vert \xx_{k_{j+1}}^*\left(x_0+y_{\epsilon}\right)\right\vert &&\forall 1\le j\le j_0.\label{secondj0} 
	\end{align}
Since $k_j>n$ for all $1\le j\le j_0$, it follows from the above that 
\begin{align}
&\cG\left(x_0+y_{\epsilon}, i,1\right)\subset \cP\left( \left\lbrace 1,\dots, n_0\right\rbrace\right) &&\forall 1\le i\le n_0 \label{thirdn0}
\end{align}
and 
\begin{align}
\cG\left(x_0+y_{\epsilon}, n_0+i,1\right)=\left\lbrace \left\lbrace 1,\dots, n_0 \right\rbrace\cup \left\lbrace k_{j}: 1\le j\le i\right\rbrace \right\rbrace &&\forall 1\le i\le j_0.  \label{fourthn0}
\end{align}

	Let $\cO'=\left(k_j'\right)_{j\in \NN}$ be a greedy ordering for $x_0+y_{\epsilon}$ such that 
	\begin{align*}
		\left\Vert \sum_{j=1}^{j_0} b_{j_0,j} \xx_{k_{j}'}^*\left(x_0+y_{\epsilon}\right)\xx_{k_{j}'}\right\Vert\le l_0. 
	\end{align*}
	It follows from \eqref{onlyoneset} \eqref{firstn0} and \eqref{secondj0} that $\left(k_{j+n_0}'\right)_{j\in \NN}$ is a greedy ordering for $y_{\epsilon}$ and that 
	\begin{align}
		&k'_{j+n_0}=k_{j}&&\forall 1\le j\le j_0,
	\end{align}
whereas \eqref{notthesamemodulus} and \eqref{thirdn0} entail that for all $1\le j\le n_0$, 
\begin{align*}
& k_j'\le n_0 &&\text{and}&&&\cG\left(x_0+y_{\epsilon}, j, 1\right)=\left\lbrace k_1',\dots, k_j'\right\rbrace.   
\end{align*}	
	Since $\left\Vert x\right\Vert\le \epsilon_1$,  $\left\Vert y_{\epsilon}\right\Vert\le 2\epsilon_1$ and $\left\Vert x-y_{\epsilon}\right\Vert\le \epsilon$, we have 
	\begin{align*}
		&\left\Vert \sum_{j=1}^{j_0} b_{j_0,j} \xx_{k_{j}}^*\left(x\right)\xx_{k_{j}}\right\Vert\le  \epsilon K_1\alpha_3 j_0+\left\Vert \sum_{j=1}^{j_0} b_{j_0,j}\xx_{k_{j}}^*\left(y_{\epsilon}\right)\xx_{k_{j}}\right\Vert
\end{align*}	
and
\begin{align*}
&\left\Vert \sum_{j=1}^{j_0} b_{j_0,j}\xx_{k_{j}}^*\left(y_{\epsilon}\right)\xx_{k_{j}}\right\Vert \le  \left\Vert \sum_{j=1}^{j_0-n_0} b_{j_0,j}\xx_{k_{j+n_0}'}^*\left(y_{\epsilon}\right)\xx_{k_{j+n_0}'}\right\Vert+\left\Vert \sum_{j=j_0-n_0+1}^{j_0} b_{j_0,j}\xx_{k_{j+n_0}'}^*\left(y_{\epsilon}\right)\xx_{k_{j+n_0}'}\right\Vert\\
		&\le 2\epsilon_1\alpha_3K_1n_0+\left\Vert \sum_{j=1}^{j_0-n_0}b_{j_0,j}\xx_{k_{j+n_0}'}^*\left(x_0+y_{\epsilon}\right)\xx_{k_{j+n_0}'}\right\Vert\\
		&=2\alpha_3K_1\epsilon_1n_0+\left\Vert \sum_{j=n_0+1}^{j_0}b_{j_0,j-n_0}\xx_{k_{j}'}^*\left(x_0+y_{\epsilon}\right)\xx_{k_{j}'}\right\Vert\\
		&\le 2\alpha_3K_1 \epsilon_1n_0+\alpha_3K_1n_0a_1+\left\Vert \sum_{j=1}^{n_0} b_{j_0,j} \xx_{k_{j}'}^*\left(x_0+y_{\epsilon}\right)\xx_{k_{j}'}+\sum_{j=n_0+1}^{j_0}b_{j_0,j-n_0}\xx_{k_{j}'}^*\left(x_0+y_{\epsilon}\right)\xx_{k_{j}'}\right\Vert \\
		&\le 3\alpha_3K_1a_1 n_0+\left\Vert \sum_{j=1}^{j_0}b_{j_0,j}\xx_{k_{j}'}^*\left(x_0+y_{\epsilon}\right)\xx_{k_{j}'}\right\Vert+\left\Vert \sum_{j=n_0+1}^{j_0}\left(b_{j_0,j}-b_{j_0,j-n_0}\right)\xx_{k_{j}'}^*\left(x_0+y_{\epsilon}\right)\xx_{k_{j}'}\right\Vert\\
		\le& 3\alpha_3K_1a_1n_0+\alpha_3 t_{n_0}\left(a_1+2\epsilon_1\right) +l_0.
	\end{align*}
	Letting $\epsilon$ tend to $0$ we obtain 
	\begin{align*}
		&\left\Vert \sum_{j=1}^{j_0} b_{j_0,j}\xx_{k_{j}}^*\left(x\right)\xx_{k_{j}}\right\Vert\le 3\alpha_3K_1a_1n_0+\alpha_3 t_{n_0}\left(a_1+2\epsilon_1\right) +l_0.
	\end{align*}
	Since this holds for every $x\in \XX_0$ such that $\left\Vert x\right\Vert\le \epsilon_1$, considering \eqref{smallsupport10} we conclude that \eqref{cqgetc} holds for all $x\in \XX_0$ with $K_2=3\alpha_3K_1a_1n_0+\alpha_3 t_{n_0}\left(a_1+2\epsilon_1\right) +l_0$. 
	It remains to show that this holds for all $x\in \XX$, replacing $K_2$ for some $K_2'$. By an inductive argument, we may assume that $n_0=1$. So, fix $x\in \XX$ with $\xx_1^*\left(x\right)\not=0$ and $j_0\in A$, and let $\cO=\left(k_j\right)_{j\in \NN}$ be a greedy ordering for $x$. Let $y:=x-\xx_1^*\left(x\right)\xx_1$. If $1\not\in\left \lbrace k_j: 1\le j\le j_0\right\rbrace$, then we can complete $\left(k_j\right)_{1\le j\le j_0}$ to a greedy ordering for $y$, so 
	\begin{align*}
		\left\Vert \sum_{j=1}^{j_0} b_{j_0,j}\xx_{k_{j}}^*\left(x\right)\xx_{k_{j}}\right\Vert &=\left\Vert \sum_{j=1}^{j_0} b_{j_0,j}\xx_{k_{j}}^*\left(y\right)\xx_{k_{j}}\right\Vert \le K_2\left\Vert y\right\Vert \\
		&\le K_2\left(1+\alpha_3\right)\left\Vert x\right\Vert. 
	\end{align*}
	On the other hand, if there is $1\le j_1\le j_0$  such that $k_{j_1}=1$, let $\cO'=\left(k_j'\right)_{j\in \NN}$ be defined by 
	\begin{align*}
		k_j':=&
		\begin{cases}
			k_j & \text{ if  } 1\le j<j_1;\\
			k_{j+1}\text{ if }j\ge j_1. 
		\end{cases}
	\end{align*}
	Then $\cO'$ is a greedy ordering for $y$ and $\xx_{k_j'}^*\left(x\right)=\xx_{k_j'}^*\left(y\right)$ for every $j\in\NN$. Hence, 
	\begin{align*}
		&\left\Vert \sum_{j=1}^{j_0}b_{j_0,j}\xx_{k_j}^*\left(x\right)\xx_{k_j}\right\Vert=\left\Vert \sum_{j=1}^{j_1-1}b_{j_0,j}\xx_{k_j'}^*\left(y\right)\xx_{k_j'} +b_{j_0,j_1}\xx_1^*\left(x\right)\xx_1 + \sum_{j=j_1}^{j_0-1}b_{j_0,j+1}\xx_{k_{j}'}^*\left(y\right)\xx_{k_{j}'} \right\Vert\\
		&\le \left\Vert b_{j_0,j_1} \xx_1^*\left(x\right)\xx_1  \right\Vert+ \left\Vert b_{j_0,j_0} \xx_{k'_{j_0}}^*\left(x\right)\xx_{k'_{j_0}}  \right\Vert+ \left\Vert \sum_{j=1}^{j_0}b_{j_0,j}\xx_{k_j'}^*\left(y\right)\xx_{k_j'}\right\Vert +\left\Vert \sum_{j=j_1}^{j_0-1}\left(b_{j_0,j}-b_{j_0,j+1}\right)\xx_{k_j'}^*\left(x\right)\xx_{k_j'}\right\Vert\\
		&\le 2K_1\alpha_3\left\Vert x\right\Vert +K_2\left\Vert y \right\Vert+t_1\alpha_3\left\Vert x\right\Vert\le \left(2K_1\alpha_3+K_2\left(1+\alpha_3\right)+t_1\alpha_3\right)\left\Vert x\right\Vert. 
	\end{align*}
	Hence, \eqref{cqgetc} holds if we replace $K_2$ with $\left(2K_1\alpha_3+K_2\left(1+\alpha_3\right)+t_1\alpha_3\right)$. 
\end{proof}

Next, we give sufficient conditions under which uniform boundedness of the greedy-like sums entails convergence of the greedy-like sums. 

\begin{lemma}\label{lemmaconvergence=boundedness}
	Let $\XB$ be a basis for a Banach space $\XX$, $A\subset \NN$ an infinite set,  and $K_1$ and $K_2$ positive constants. Let $\left(b_{m,j}\right)_{\substack{m\in A\\1\le j\le m}}$ be scalars such that
	\begin{align}
	\begin{split}
		&\left\vert b_{m,j}\right\vert\le K_1 \qquad\forall m\in A, \;\forall 1\le j\le m;\\
		&\lim_{\substack{m\in A_{\ge j}\\m\to+\infty}}b_{m,j}=1 \qquad \forall j\in \NN. \label{convergencebmj}
		\end{split}
	\end{align}
	Suppose that for every $x\in \XX$, every greedy ordering $\cO\left(x\right)=\left(k_j\right)_{j\in \NN}$  and every $m\in A$, 
	\begin{align}
		&\left\Vert \sum_{j=1}^{m}b_{m,j}\xx_{k_j}^*\left(x\right)\xx_{k_j}\right\Vert\le K_2\left\Vert x\right\Vert. \label{cqgetc2}
	\end{align}
	Then $\XB$ is a strong Markushevich basis and, for every $x\in \XX$ and every greedy ordering $\cO\left(x\right)=\left(k_j\right)_{j\in \NN}$  we have
	\begin{align}
		\sum_{j=1}^{m} b_{m,j}\xx_{k_j}^*\left(x\right)\xx_{k_j}\xrightarrow[\substack{ m\to +\infty\\m\in A }]{}x. \label{convergenceequiv}
	\end{align}
\end{lemma}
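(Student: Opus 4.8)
The plan is to reduce to the normalized setting, establish \eqref{convergenceequiv} first on the dense subspace $\langle\XB\rangle$, then bootstrap to all of $\XX$ by an approximation argument, and finally read off the strong Markushevich property for free.

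\emph{Normalization and the dense case.} By Remark \ref{remarknormalized} we may pass to $\Vert\cdot\Vert_\circ$ and assume $\alpha_1=\alpha_2=\alpha_3=1$. If $x\in\langle\XB\rangle$, then $\supp(x)$ is finite, say $|\supp(x)|=s$; for every greedy ordering $\cO=(k_j)_j$ one has $\{k_1,\dots,k_s\}=\supp(x)$ and $\xx_{k_j}^*(x)=0$ for $j>s$, so for every $m\in A$ with $m\ge s$ the sum in \eqref{convergenceequiv} equals $\sum_{j=1}^s b_{m,j}\xx_{k_j}^*(x)\xx_{k_j}$, which tends to $\sum_{n\in\supp(x)}\xx_n^*(x)\xx_n=x$ as $m\to+\infty$ by \eqref{convergencebmj}, since only the finitely many indices $1,\dots,s$ are involved.

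\emph{The general case.} Fix $x\in\XX$, a greedy ordering $\cO=(k_j)_j$ and $\varepsilon>0$, and pick $u\in\langle\XB\rangle$ with $\Vert x-u\Vert$ small. Arranging that the (finitely many) large coordinates of $x$ sit inside $\supp(u)$, one finds an \emph{initial} greedy set $F=\{k_1,\dots,k_N\}$ of $x$ with $w:=x-G_N^\cO(x)=x-P_F(x)$ of small norm, and with $\xx_{k_{N+j}}^*(x)=\xx_{k_{N+j}}^*(w)$ for all $j\ge1$. Split
$$\sum_{j=1}^m b_{m,j}\xx_{k_j}^*(x)\xx_{k_j}=\sum_{j=1}^N b_{m,j}\xx_{k_j}^*(x)\xx_{k_j}+\sum_{j=N+1}^m b_{m,j}\xx_{k_j}^*(x)\xx_{k_j}.$$
The ``head'' converges to $P_F(x)=G_N^\cO(x)$ as $m\to+\infty$ by \eqref{convergencebmj} and finiteness of $F$. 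The ``tail'' is a truncated weighted greedy sum of $w$ along the greedy ordering $\cO_N(x)=(k_{N+j})_j$; combining \eqref{cqgetc2} applied to $w$ with the fact that $b_{m,j}\to1$ for each fixed $j$ (a summation-by-parts manipulation reducing the truncated weight family $(b_{m,j})_{j>N}$ to admissible weight families plus errors vanishing as $m\to+\infty$), one obtains $\limsup_{m\in A}\Vert\text{tail}\Vert\le K_2\Vert w\Vert$. Hence
$$\limsup_{m\in A}\Bigl\Vert x-\sum_{j=1}^m b_{m,j}\xx_{k_j}^*(x)\xx_{k_j}\Bigr\Vert\le\Vert x-P_F(x)\Vert+K_2\Vert w\Vert=(1+K_2)\Vert w\Vert<\varepsilon$$
for $u$ close enough to $x$; as $\varepsilon$ is arbitrary, \eqref{convergenceequiv} holds for every $x\in\XX$.

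\emph{Strong Markushevich.} For every $m$ and every greedy ordering $\cO=(k_j)_j$ the terms with $k_j\notin\supp(x)$ vanish, so $\sum_{j=1}^m b_{m,j}\xx_{k_j}^*(x)\xx_{k_j}\in\langle\xx_n:n\in\supp(x)\rangle$; by \eqref{convergenceequiv}, $x\in\overline{\langle\xx_n:n\in\supp(x)\rangle}$. Taking $\supp(x)=\emptyset$ forces $x=0$, so $\XB$ is a Markushevich basis; and for any infinite $A\subseteq\NN$, if $\xx_n^*(x)=0$ for all $n\notin A$ then $\supp(x)\subseteq A$, whence $x\in\overline{\langle\xx_n:n\in A\rangle}$, so $\XB$ is strong.

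\emph{Main obstacle.} The crux is the tail estimate in the general case. First, the truncated weight family $(b_{m,j})_{j>N}$ need not itself be admissible for \eqref{cqgetc2}, so one must recombine the truncated sums into admissible ones (Abel summation, or, for the Cesàro and de la Vallée–Poussin weights, the explicit scaling/shift identities, cf. Remark \ref{remarkVPQG}) while controlling the resulting error terms. Second — and more delicate — one must ensure the final bound is genuinely governed by $\Vert x-P_F(x)\Vert$ and does not secretly absorb an uncontrolled factor of $N=|\supp(u)|$ coming from the non‑unconditionality of $P_F$; this is the point where the uniform bound \eqref{cqgetc2} must be used essentially (in particular it is what makes the case $\supp(x)=\emptyset$ work and thus yields the Markushevich property), rather than a naïve density-plus-boundedness argument, and it may be convenient to first deduce that $\xx_{k_j}^*(x)\to0$ along every greedy ordering.
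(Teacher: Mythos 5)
There is a genuine gap at the heart of your ``general case.'' You claim that, after choosing $u\in\langle\XB\rangle$ close to $x$ and arranging that the large coordinates of $x$ lie in $\supp(u)$, the vector $w:=x-G_N^\cO(x)=x-P_F(x)$ has small norm. This is exactly what cannot be assumed: $P_F$ is not a bounded operator for a general basis, and $\Vert x-P_F(x)\Vert\to 0$ along greedy sets $F$ is precisely the quasi-greedy property, which the lemma is designed to work \emph{without}. Choosing $u$ close to $x$ with $\supp(u)\subseteq F$ only gives $\Vert x-P_F(x)\Vert\le\Vert x-u\Vert+\Vert P_F(x-u)\Vert$, and the second term can be of order $|F|\,\alpha_3\Vert x-u\Vert$, which is uncontrolled as $|F|$ grows with the accuracy of the approximation. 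You flag this concern in your ``main obstacle'' paragraph, but you do not resolve it, and the tail estimate $\limsup_m\Vert\text{tail}\Vert\lesssim K_2\Vert w\Vert$ is therefore vacuous. The circularity also infects your Markushevich/strong step: you invoke \eqref{convergenceequiv} for arbitrary $x$, but for $x$ with $\supp(x)=\emptyset$ your general-case argument collapses (there $w=x$ and $\Vert w\Vert$ need not be small).

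The paper's proof sidesteps all of this. Rather than bounding $\Vert x-P_F(x)\Vert$, it keeps the finitely supported approximant $y$ itself and bounds $\Vert x-\sum_j b_{m,j}\xx_{k_j}^*(x)\xx_{k_j}\Vert$ by $\Vert x-y\Vert$, plus the error in the finitely supported case for $y$, plus the difference of the two weighted greedy sums. The key observation is that, after a small perturbation, there is a greedy ordering $\cO'$ for $x-y$ that eventually coincides with $\cO$, so the difference term splits into a piece which is a weighted greedy sum of $x-y$ (controlled by \eqref{cqgetc2} applied to $x-y$, giving $K_2\Vert x-y\Vert$) and a finite tail that vanishes as $m\to\infty$ by \eqref{convergencebmj}. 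This is the step your proposal lacks: the uniform bound must be applied to $x-y$, not to the uncontrolled greedy tail $x-P_F(x)$. The paper also establishes Markushevich separately via the auxiliary element $\sum_n 2^{-n}\xx_n$ (comparing it with $x+\sum_n 2^{-n}\xx_n$, both of full support), so that \eqref{convergenceequiv} need only be proven first when $\supp(x)=\NN$, and the finite-support and proper-infinite-support cases follow afterwards. To salvage your write-up you would need to replace the $w=x-P_F(x)$ estimate with this $x-y$ argument and re-order the steps so that Markushevich is not invoked before it is proved.
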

\begin{proof}
	First, we prove \eqref{convergenceequiv} when $\supp\left(x\right)=\NN$. In that case, given $\cO=\left(k_j\right)_{j\in \NN}$ a greedy ordering for $x$ and $\epsilon>0$, by density we can find $n_1\in \NN$ and $y\in \langle \xx_n: 1\le n\le n_1 \rangle$ so that 
	$\left\Vert x-y\right\Vert<\epsilon$. By a small perturbation argument, we may assume that $\supp\left(y\right)=\left\lbrace 1,\dots,n_1\right\rbrace$ and that $\xx_j^*\left(y\right)\not=\xx_j^*\left(x\right)$ for all $1\le j\le n_1$. Let
	\begin{align*}
		&a:=\min_{1\le j\le n_1}\min\left\lbrace \left\vert \xx_j^*\left(x\right)\right\vert, \left\vert \xx_j^*\left(y\right)\right\vert, \left\vert \xx_j^*\left(y\right)-\xx_j^*\left(x\right)\right\vert\right\rbrace. 
	\end{align*}
	Since $a>0$ and $\supp\left(x\right)$ is infinite, there is $j_1\in \NN$ such that 
	\begin{align*}
		&a>\left\vert \xx_{k_{j_1}}^*\left(x\right)\right\vert >\left\vert \xx_{k_{j_1}+1}^*\left(x\right)\right\vert,&&& \left\lbrace 1,\dots,n_1\right\rbrace\subset \left\lbrace k_j\right\rbrace_{1\le j\le j_1-1}. 
	\end{align*}
	It follows that $\cG\left(x-y,j_1,1\right)=\left\lbrace \left\lbrace k_j: 1\le j\le j_1\right\rbrace\right\rbrace$. Thus, if $\cO'=\left(k_j'\right)_{j\in \NN}$ is a greedy ordering for $x-y$, then 
	\begin{align*}
		&\left\lbrace k_j'\right\rbrace_{1\le j\le j_1}=\left\lbrace k_j\right\rbrace_{1\le j\le j_1}\supset \supp\left(y\right),\end{align*}
	and we can pick one such ordering so that $k_j'=k_j$ for all $j>j_1$. Since $y=\sum_{j=1}^{j_1}\xx_{k_j'}^*\left(y\right)\xx_{k_j'}$, by \eqref{convergencebmj} there is $m_1\in A_{>j_1}$ such that 
	\begin{align*}
		&\left\Vert y-\sum_{j=1}^{m}b_{m,j}\xx_{k_j'}^*\left(y\right)\xx_{k_j'}\right\Vert<\epsilon  &&\forall m\in A_{\ge m_1}.
	\end{align*}
	Now fix $m\in A_{>m_1}$. We have
	\begin{align*}
		&\left\Vert x-\sum_{j=1}^{m}b_{m,j}\xx_{k_j}^*\left(x\right)\xx_{k_j}\right\Vert\\
		&\le \left\Vert x-y\right\Vert+\left\Vert y-\sum_{j=1}^{m}b_{m,j}\xx_{k_j'}^*\left(y\right)\xx_{k_j'}\right\Vert+\left\Vert \sum_{j=1}^{m}b_{m,j}\xx_{k_j'}^*\left(y\right)\xx_{k_j'}-\sum_{j=1}^{m}b_{m,j}\xx_{k_j}^*\left(x\right)\xx_{k_j}\right\Vert\\
		&\le 2\epsilon+\left\Vert \sum_{j=1}^{m}b_{m,j}\xx_{k_j'}^*\left(y-x\right)\xx_{k_j'}\right\Vert+\left\Vert\sum_{j=1}^{m}b_{m,j}\xx_{k_j'}^*\left(x\right)\xx_{k_j'}-\sum_{j=1}^{m}b_{m,j}\xx_{k_j}^*\left(x\right)\xx_{k_j}\right\Vert\\
		& \le 2\epsilon+K_2\epsilon+\left\Vert\sum_{j=1}^{k_1}b_{m,j}\xx_{k_j'}^*\left(x\right)\xx_{k_j'}-\sum_{j=1}^{k_1}b_{m,j}\xx_{k_j}^*\left(x\right)\xx_{k_j}\right\Vert\xrightarrow[\substack{m\to +\infty\\m\in A_{\ge m_1}}]{}2\epsilon+K_2\epsilon.
	\end{align*}
	Hence, we have proven \eqref{convergenceequiv} for $x\in \XX$ when $\supp\left(x\right)=\NN$. \\
	Second, we prove that $\XB$ is a Markushevich basis: Suppose $\xx_n^*\left(x\right)=0$ for all $n\in \NN$. Let $y:=\sum_{n=1}^{\infty}\frac{\xx_n}{2^n}$. Since $y$ and $x+y$ have $\NN$ as their support and both have the same unique greedy ordering, the case proven above gives 
	\begin{align*}
		&y=\lim_{\substack{m\to +\infty\\ m\in A}}\sum_{j=1}^{m}b_{m,j}\xx_j^*\left(y\right)\xx_j=\lim_{\substack{m\to +\infty\\ m\in A}}\sum_{j=1}^{m}b_{m,j}\xx_j^*\left(x+y\right)\xx_j=x+y,
	\end{align*}
	so $x=0$.\\
	Third, suppose that $x$ has finite support. Then by our second step,  $x$ lies in the linear span of $\XB$, so \eqref{convergenceequiv} follows at once from \eqref{convergencebmj}. \\
	Fourth, we prove that $\XB$ is strong. To this end, suppose that $B$ is a proper subset of $\NN$. Since $\XB$ is a Markushevich basis, to prove that 
	\begin{align*}
		\left\lbrace x\in \XX: \xx_n^*\left(x\right)=0\;\forall n\in \NN\setminus B\right\rbrace= \overline{\langle \xx_n: n\in B  \rangle},
	\end{align*}
	we may assume that $B$ is not finite. Fix $x\in \XX\setminus \left\lbrace 0\right\rbrace$, and suppose that $\xx_n^*\left(x\right)=0\;\forall n\in \NN\setminus B$. Let $y:=\sum_{n\not\in \supp\left(x\right)}\frac{\xx_n}{2^n}$. Then $\supp\left(x+y\right)=\NN$, so by the first case, there is a greedy ordering $\cO\left(x+y\right)$ such that
	\begin{align*}
		&x+y=\lim_{\substack{m\to +\infty\\m\in A }}\sum_{j=1}^{m} b_{m,j}\xx_{k_j}^*\left(x+y\right)\xx_{k_j}. 
	\end{align*}
	It follows from \eqref{convergencebmj} and the absolute convergence of the series $\sum_{n\in \NN}\xx_n^*\left(y\right)\xx_n$ that 
	\begin{align*}
		&y=\lim_{\substack{m\to +\infty\\m\in A }}\sum_{j=1}^{m} b_{m,j}\xx_{k_j}^*\left(y\right)\xx_{k_j}. 
	\end{align*}
	Hence, 
	\begin{align*}
		&x=\lim_{\substack{m\to +\infty\\m\in A }}\sum_{j=1}^{m} b_{m,j}\xx_{k_j}^*\left(x\right)\xx_{k_j}=\lim_{\substack{m\to +\infty\\m\in A }}\sum_{\substack{1\le j\le m\\ k_j\in B}} b_{m,j}\xx_{k_j}^*\left(x\right)\xx_{k_j}.
	\end{align*}
	This proves that
	\begin{align*}
		\left\lbrace x\in \XX: \xx_n^*\left(x\right)=0\;\forall n\in \NN\setminus B\right\rbrace\subset \overline{\langle \xx_n: n\in B  \rangle}
	\end{align*}
	Since the reverse inclusion holds for every basis, we have shown that $\XB$ is strong. \\
	We are left to prove \eqref{convergenceequiv} when $\supp\left(x\right)$ is a proper infinite subset $B$ of $\NN$. Since $\XB$ is strong, this follows by a straightforward modification of the argument for the case $B=\NN$, namely we replace $\left\lbrace 1,\dots,n_1\right\rbrace$ by $\left\lbrace 1,\dots,n_1\right\rbrace\cap B$  throughout the proof. 
\end{proof}

\begin{remark}\rm While that is note the focus of this work, we note that Lemmas~\ref{lemmapointwisebounded} and~\ref{lemmaconvergence=boundedness} can be proved for $p$-Banach spaces, with only straightforward modifications. 
\end{remark}
Now we can give the promised characterization of Cesàro quasi-greedy bases. 
\begin{proposition}\label{propositioncarac}Let $\XB$ be a basis for a Banach space $\XX$. The following are equivalent: 
	\begin{enumerate}[\rm (i)]
		\item \label{CQG} $\XB$ is Cesàro quasi-greedy. 
		\item  \label{ConvergenceCQG} For each $x\in \XX$, if $\cO=\left(k_j\right)_{j\in \NN}$ is a greedy ordering for $x$, 
		\begin{eqnarray*}
			x=\lim_{n\to +\infty}C^{g,\cO}_n\left(x\right).
		\end{eqnarray*}
		\item  \label{pointwiseboundedCQG} For each $x\in \XX$ there is $c_x>0$ and  $\cO=\left(k_j\right)_{j\in \NN}$ a greedy ordering for $x$ such that 
		\begin{eqnarray*}
			\left\Vert C^{g,\cO}_n\left(x\right)\right\Vert\le c_x\forall n\in \NN. 
		\end{eqnarray*}
	\end{enumerate}
	If these conditions are met, $\XB$ is a strong Markushevich basis.
\end{proposition}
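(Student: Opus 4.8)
The plan is to obtain all the implications as immediate consequences of the two general lemmas just proved, by specializing them to the Cesàro weights. Concretely, for $m\in\NN$ and $1\le j\le m$ set $b_{m,j}:=\frac{m+1-j}{m}$ and take $A:=\NN$; then for a greedy ordering $\cO=\left(k_j\right)_{j\in\NN}$ for $x$ and $m=n$ one has $\sum_{j=1}^{m}b_{m,j}\xx_{k_j}^*\left(x\right)\xx_{k_j}=C^{g,\cO}_n\left(x\right)$, so the abstract sums appearing in Lemmas~\ref{lemmapointwisebounded} and~\ref{lemmaconvergence=boundedness} become exactly the Cesàro greedy sums.

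First I would record the elementary facts about these weights. For $1\le i\le j\le m$ one computes $\left\vert b_{m,j}-b_{m,i}\right\vert=\frac{j-i}{m}$, so the first hypothesis of Lemma~\ref{lemmapointwisebounded} holds with $t_n:=n$ (and $t_0=0$); moreover $0<b_{m,j}\le 1$, so the uniform bound holds with $K_1:=1$; and for each fixed $j$, $b_{m,j}=\frac{m+1-j}{m}\to 1$ as $m\to+\infty$, which is the second line of \eqref{convergencebmj}. With these in hand, the implication \eqref{pointwiseboundedCQG}$\Rightarrow$\eqref{CQG} is obtained by applying Lemma~\ref{lemmapointwisebounded}: condition \eqref{pointwiseboundedCQG} gives, for each $x$, a single greedy ordering $\cO$ with $\left\Vert C^{g,\cO}_n\left(x\right)\right\Vert\le c_x$ for all $n$, so in particular for every $m\in\NN$ we may take $\cO\left(x,m\right)=\cO$, and hypothesis \eqref{pointwisebound} is met; the lemma then produces $K_2>0$ with $\left\Vert C^{g,\cO}_n\left(x\right)\right\Vert\le K_2\left\Vert x\right\Vert$ for every $x$, every greedy ordering and every $n$, i.e. $\XB$ is $K_2$-CQG.

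For \eqref{CQG}$\Rightarrow$\eqref{ConvergenceCQG} together with the final assertion, observe that the $C$-CQG property is precisely hypothesis \eqref{cqgetc2} of Lemma~\ref{lemmaconvergence=boundedness} (with $K_2=C$ and $A=\NN$), while \eqref{convergencebmj} was just verified; the lemma therefore gives that $\XB$ is a strong Markushevich basis and that $C^{g,\cO}_n\left(x\right)\to x$ for every $x$ and every greedy ordering, which is \eqref{ConvergenceCQG}. Finally \eqref{ConvergenceCQG}$\Rightarrow$\eqref{pointwiseboundedCQG} is trivial, since a norm-convergent sequence is bounded, so $c_x:=\sup_{n}\left\Vert C^{g,\cO}_n\left(x\right)\right\Vert<+\infty$ works for any greedy ordering $\cO$. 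There is essentially no obstacle beyond this bookkeeping: the substantive work is entirely contained in Lemmas~\ref{lemmapointwisebounded} and~\ref{lemmaconvergence=boundedness}. The only point meriting a moment's care is matching quantifiers in the first implication --- Lemma~\ref{lemmapointwisebounded} asks only that for each $m$ \emph{some} greedy ordering achieve the pointwise bound, whereas \eqref{pointwiseboundedCQG} hands us one ordering good for all $m$ simultaneously, which is more than enough.
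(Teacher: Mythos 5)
Your proof is correct and follows exactly the paper's route: specialize Lemma~\ref{lemmapointwisebounded} and Lemma~\ref{lemmaconvergence=boundedness} to the Cesàro weights $b_{m,j}=\frac{m+1-j}{m}$ with $A=\NN$, $t_n=n$, $K_1=1$, and close the cycle with the trivial implication \eqref{ConvergenceCQG}$\Rightarrow$\eqref{pointwiseboundedCQG}. Your added observation on the quantifier mismatch in the first implication is apt and, if anything, slightly more explicit than the paper.
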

\begin{proof}
	\ref{pointwiseboundedCQG} $\Rightarrow$ \ref{CQG}: Apply Lemma~\ref{lemmapointwisebounded} with $A=\NN$, $t_n=n$ for all $n\in \NN$, $K_1=1$ and, for every $m\in A$ and every $1\le j\le m$, $b_{m,j}=\frac{m+1-j}{m}$.\\
	\ref{CQG} $\Rightarrow$ \ref{ConvergenceCQG}. If $\XB$ is $K$-CQG, apply Lemma~\ref{lemmaconvergence=boundedness} 
	with $K_2=K$ and $A$, $K_1$ and  $\left\lbrace b_{m,j}\right\rbrace_{\substack{m\in A\\ 1\le j\le m}}$ as before. \\
	Lemma~\ref{lemmaconvergence=boundedness} also entails that $\XB$ is a strong Markushevich basis. 
\end{proof}
A similar characterization holds for de la Vallée-Poussin quasi-greedy bases. 

\begin{proposition}\label{propositioncaracVPQG}Let $\XB$ be a basis for a Banach space $\XX$. The following are equivalent: 
	\begin{enumerate}[\rm (i)]
		\item \label{VPQG3} $\XB$ is de la Vallée Poussin quasi-greedy. 
		\item \label{ConvergenceVPQG3} For each $x\in \XX$, if $\cO=\left(k_j\right)_{j\in \NN}$ is greedy ordering for $x$, 
		\begin{eqnarray*}
			x=\lim_{n\to +\infty}\left(2C^{g,\cO}_{2n}\left(x\right)-C^{g,\cO}_n\left(x\right)\right).
		\end{eqnarray*}
		\item \label{pointwiseboundedVPQG3} For each $x\in \XX$ there is $c_x>0$ and  $\cO=\left(k_j\right)_{j\in \NN}$ a greedy ordering for $x$ such that
		\begin{eqnarray*}
			\left\Vert 2C^{g,\cO}_{2n}\left(x\right)-C^{g,\cO}_n\left(x\right)\right\Vert\le c_x\forall n\in \NN. 
		\end{eqnarray*}
	\end{enumerate}
	If these conditions are met, $\XB$ is a strong Markushevich basis. 
\end{proposition}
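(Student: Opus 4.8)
The plan is to follow the proof of Proposition~\ref{propositioncarac} essentially verbatim, the only new ingredient being the observation that the de la Vallée-Poussin operator is a weighted greedy sum of exactly the type handled by Lemmas~\ref{lemmapointwisebounded} and~\ref{lemmaconvergence=boundedness}. Indeed, by~\eqref{VPQGb2} in Remark~\ref{remarkVPQG}, for $x\in\XX$, a greedy ordering $\cO=\left(k_j\right)_{j\in\NN}$ for $x$ and $n\in\NN$, setting $m:=2n$ one has
\[
2C^{g,\cO}_{2n}\left(x\right)-C^{g,\cO}_n\left(x\right)=\sum_{j=1}^{m}b_{m,j}\,\xx_{k_j}^*\left(x\right)\,\xx_{k_j},
\]
where $b_{m,j}:=1$ for $1\le j\le m/2$ and $b_{m,j}:=\frac{2\left(m+1-j\right)}{m}$ for $m/2<j\le m$. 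Accordingly I would apply both lemmas with $A:=2\NN$, $K_1:=1$, and $t_n:=2n$ for $n\ge 1$ (the value of $t_0$ being immaterial, since the $i=j$ instance of the Lipschitz-type condition is trivial).

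Before invoking the lemmas I would check their hypotheses on the weights. One has $\frac{2}{m}\le b_{m,j}\le 1$ for every $m\in 2\NN$ and $1\le j\le m$, so $\left\vert b_{m,j}\right\vert\le K_1=1$; and for each fixed $j$, $b_{m,j}=1$ as soon as $m\ge 2j$, so $b_{m,j}\to 1$ as $m\to+\infty$ through $\left(2\NN\right)_{\ge j}$, which is~\eqref{convergencebmj}. The only estimate requiring a moment's care is $\left\vert b_{m,j}-b_{m,i}\right\vert\le\frac{t_{j-i}}{m}=\frac{2\left(j-i\right)}{m}$ for $1\le i\le j\le m$, which I would verify by distinguishing three cases: $j\le m/2$ (both weights equal $1$, difference $0$); $i\le m/2<j$ (the difference equals $\frac{2j-m-2}{m}$, which is $\le\frac{2\left(j-i\right)}{m}$ because $i\le m/2$); and $m/2<i\le j$ (the difference equals $\frac{2\left(j-i\right)}{m}$ exactly). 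None of these presents any difficulty.

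With the setup in place, the three implications run exactly as in Proposition~\ref{propositioncarac}. For \ref{pointwiseboundedVPQG3}$\Rightarrow$\ref{VPQG3} I would apply Lemma~\ref{lemmapointwisebounded}, feeding it, for each $x$, the single greedy ordering supplied by~\ref{pointwiseboundedVPQG3} for every $m\in 2\NN$; its conclusion~\eqref{cqgetc} is precisely the inequality defining $K_2$-VPQG. For \ref{VPQG3}$\Rightarrow$\ref{ConvergenceVPQG3} I would apply Lemma~\ref{lemmaconvergence=boundedness}, noting that the $K$-VPQG property, restated for $m\in 2\NN$, is exactly hypothesis~\eqref{cqgetc2} with $K_2=K$; its conclusion yields both the convergence in~\ref{ConvergenceVPQG3} and the fact that $\XB$ is a strong Markushevich basis. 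Finally \ref{ConvergenceVPQG3}$\Rightarrow$\ref{pointwiseboundedVPQG3} is immediate, as a convergent sequence is bounded (and $c_x>0$ may be arranged by enlarging the bound). The closing assertion that $\XB$ is a strong Markushevich basis whenever the three conditions hold then follows, since they include~\ref{VPQG3}, to which Lemma~\ref{lemmaconvergence=boundedness} applies.

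There is no real obstacle here; the proposition is a direct analogue of Proposition~\ref{propositioncarac}. The one point not to overlook is that the relevant index set is $2\NN$ rather than $\NN$ --- because the weighted sum $2C^{g,\cO}_{2n}-C^{g,\cO}_n$ carries $2n$ terms --- so that the weights $b_{m,j}$ are prescribed, and need only satisfy the hypotheses of the two lemmas, for even $m$. Everything else is the same bookkeeping as in the Cesàro case.
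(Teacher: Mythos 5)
Your proposal is correct and follows essentially the same route as the paper: both apply Lemma~\ref{lemmapointwisebounded} and Lemma~\ref{lemmaconvergence=boundedness} with $A=2\NN$, $t_n=2n$, $K_1=1$, and the de la Vallée-Poussin weights $b_{2m,j}$ read off from Remark~\ref{remarkVPQG}. You fill in the Lipschitz-type verification $\left\vert b_{m,j}-b_{m,i}\right\vert\le t_{j-i}/m$ and the trivial implication \ref{ConvergenceVPQG3}$\Rightarrow$\ref{pointwiseboundedVPQG3}, which the paper leaves implicit, but the argument is the same.
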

\begin{proof}
	\ref{pointwiseboundedVPQG3} $\Rightarrow$ \ref{VPQG3}: Considering Remark~\ref{remarkVPQG}, this follows by an application of Lemma~\ref{lemmapointwisebounded} with $A=2\NN$, $t_n=2n$ for all $n\in \NN$, $K_1=1$ and, for every $m\in \NN$, 
	\begin{align*}
		b_{2m,j}=
		\begin{cases}
			1 & \text{ if }1\le j\le m;\\
			\frac{2m+1-j}{m} \text{ if } m+1\le j\le m. 
		\end{cases}
	\end{align*}
	\ref{VPQG3} $\Rightarrow$ \ref{ConvergenceVPQG3}. If $\XB$ is $K$-VPQG, apply Lemma~\ref{lemmaconvergence=boundedness} 
	with $K_2=K$ and $A$, $K_1$ and  $\left\lbrace b_{2m,j}\right\rbrace_{\substack{m\in \NN\\ 1\le j\le 2m}}$ as before. \\
	Finally, Lemma~\ref{lemmaconvergence=boundedness} also gives that $\XB$ is a strong Markushevich basis. 
\end{proof}

\begin{remark}\label{remarkqg}\rm Note that if apply Lemmas~\ref{lemmapointwisebounded} and~\ref{lemmaconvergence=boundedness} with $A=\NN$ and $b_{m_j}=1$ for all $m\in \NN$ and all $1\le j\le m$, we obtain the classical equivalences for quasi-greedy bases. 
\end{remark}

\section{Some technical results}\label{tech}
In this section, we show that VPQG bases are nearly unconditional, and establish further results that are used in Sections~\ref{subsequences} and~\ref{relation}.

First, we prove near unconditionality (or thresholding boundedness, which is equivalent by \cite[Proposition 4.5]{DKK2003}). To this end, we will use the following results from the literature.
\begin{lemma} \cite[Theorem 2.6]{AAB2023a}, \cite[Proposition 4.5]{DKK2003}\label{lemmaequivqglc} Let $\XB$ be a basis of a Banach space $\XX$. The following are equivalent: 
	\begin{itemize}
		\item $\XB$ is quasi-greedy for largest coefficients. 
		\item $\XB$ is nearly unconditional. 
		\item $\XB$ is thresholding bounded. 
	\end{itemize}
\end{lemma}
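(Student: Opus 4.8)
This lemma is quoted from \cite[Theorem 2.6]{AAB2023a} and \cite[Proposition 4.5]{DKK2003}, and in the body of the paper one simply invokes them; below is a plan for a self-contained proof. It suffices to prove the cycle: nearly unconditional $\Rightarrow$ thresholding bounded $\Rightarrow$ quasi-greedy for largest coefficients $\Rightarrow$ nearly unconditional. The first implication is immediate from \eqref{nearlyuncdef} on taking $A=A\left(x,t\right)$. For the second, given a finite set $A$, a sign vector $\varepsilon\in\EE^{A}$ and $x\in\Cu$ with $\supp\left(x\right)\cap A=\emptyset$, the vectors $z_{\rho}:=\Ind_{\varepsilon,A}+\rho x$ lie in $\Cu$ for $0<\rho<1$ and satisfy $A\left(z_{\rho},1\right)=A$, so \eqref{thresholdingboundeddef} at $t=1$ gives $\left\Vert\Ind_{\varepsilon,A}\right\Vert=\left\Vert P_{A\left(z_{\rho},1\right)}\left(z_{\rho}\right)\right\Vert\le\theta\left(1\right)\left\Vert z_{\rho}\right\Vert$, and letting $\rho\to1$ shows that $\XB$ is $\theta\left(1\right)$-QGLC. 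The content lies in the last implication.

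Assume $\XB$ is $C$-QGLC. It is then $C$-SUCC: for finite $B\subseteq A$ and $\varepsilon\in\EE^{A}$, the vector $\Ind_{\varepsilon,A\setminus B}\in\Cu$ is supported off $B$, so the QGLC inequality for $B$, $\varepsilon|_{B}$ and $\Ind_{\varepsilon,A\setminus B}$ reads $\left\Vert\Ind_{\varepsilon,B}\right\Vert\le C\left\Vert\Ind_{\varepsilon,B}+\Ind_{\varepsilon,A\setminus B}\right\Vert=C\left\Vert\Ind_{\varepsilon,A}\right\Vert$. Now fix $x\in\Cu$, $t\in(0,1]$ and a finite $A\subseteq A\left(x,t\right)$. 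Using density of $\langle\XB\rangle$, truncation at modulus $1$, and the unconditionality-free estimate $\left\Vert P_{A}\left(u\right)\right\Vert\le\aabs{A}\alpha_{3}\left\Vert u\right\Vert$ (for the fixed set $A$), we reduce to the case where $\supp\left(x\right)$ is finite and the moduli $\aabs{\xx_{n}^{*}\left(x\right)}$, $n\in\supp\left(x\right)$, are pairwise distinct (possibly after shrinking $t$). Enumerate $\supp\left(x\right)=\{n_{1},n_{2},\dots\}$ with $b_{1}>b_{2}>\cdots$, $b_{\ell}:=\aabs{\xx_{n_{\ell}}^{*}\left(x\right)}$, and set $G_{\ell}:=\{n_{1},\dots,n_{\ell}\}$, $g_{\ell}:=\left\Vert\Ind_{\varepsilon(x),G_{\ell}}\right\Vert$, $h_{\ell}:=b_{\ell}g_{\ell}$. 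Each $G_{\ell}$ is a greedy set for $x$, $b_{\ell}^{-1}P_{G_{\ell}^{c}}\left(x\right)\in\Cu$ is supported off $G_{\ell}$, and QGLC applied to $G_{\ell}$ together with $P_{G_{\ell}}\left(x\right)-b_{\ell}\Ind_{\varepsilon(x),G_{\ell}}=\sum_{p=1}^{\ell-1}\left(b_{p}-b_{p+1}\right)\Ind_{\varepsilon(x),G_{p}}$ (Abel summation) gives
\begin{align*}
	h_{\ell}&\le C\left\Vert b_{\ell}\Ind_{\varepsilon(x),G_{\ell}}+P_{G_{\ell}^{c}}\left(x\right)\right\Vert=C\left\Vert x-\sum_{p=1}^{\ell-1}\left(b_{p}-b_{p+1}\right)\Ind_{\varepsilon(x),G_{p}}\right\Vert\\
	&\le C\left\Vert x\right\Vert+C\sum_{p=1}^{\ell-1}\frac{b_{p}-b_{p+1}}{b_{p}}\,h_{p}.
\end{align*}
An induction on $\ell$, using $1+s\le e^{s}$ and $1-u\le\ln(1/u)$, then yields
\begin{align*}
	h_{\ell}\le C\left\Vert x\right\Vert\prod_{p=1}^{\ell-1}\Bigl(1+C\bigl(1-\tfrac{b_{p+1}}{b_{p}}\bigr)\Bigr)\le C\left\Vert x\right\Vert\exp\Bigl(C\sum_{p=1}^{\ell-1}\ln\tfrac{b_{p}}{b_{p+1}}\Bigr)=C\left\Vert x\right\Vert\bigl(b_{1}/b_{\ell}\bigr)^{C},
\end{align*}
so, since $b_{1}\le1$, $g_{\ell}\le C\,t^{-C}b_{\ell}^{-1}\left\Vert x\right\Vert$ whenever $b_{\ell}\ge t$. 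Finally, writing $A=\{n_{j_{1}},\dots,n_{j_{k}}\}$ with $j_{1}<\cdots<j_{k}$ and Abel-summing $P_{A}\left(x\right)=\sum_{i=1}^{k}\bigl(b_{j_{i}}-b_{j_{i+1}}\bigr)\Ind_{\varepsilon(x),\{n_{j_{1}},\dots,n_{j_{i}}\}}$ (with $b_{j_{k+1}}:=0$), and noting $\{n_{j_{1}},\dots,n_{j_{i}}\}\subseteq G_{j_{i}}$, SUCC and the bound just proved give
\begin{align*}
	\left\Vert P_{A}\left(x\right)\right\Vert\le C\sum_{i=1}^{k}\bigl(b_{j_{i}}-b_{j_{i+1}}\bigr)g_{j_{i}}\le C^{2}t^{-C}\left\Vert x\right\Vert\sum_{i=1}^{k}\frac{b_{j_{i}}-b_{j_{i+1}}}{b_{j_{i}}}\le\frac{C^{2}}{t^{C+1}}\left\Vert x\right\Vert,
\end{align*}
using $b_{j_{i}}\ge t$ and $\sum_{i}\bigl(b_{j_{i}}-b_{j_{i+1}}\bigr)=b_{j_{1}}\le1$; thus $\XB$ is nearly unconditional with $\phi\left(t\right)$ of order $t^{-\left(C+1\right)}$, which closes the cycle.

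The step I expect to be the main obstacle is the recursion for $h_{\ell}$: it is self-referential, and the decisive point is that after Abel summation it closes with the weights $1-b_{p+1}/b_{p}$, whose product telescopes---via $1-u\le\ln(1/u)$---to a fixed power of $b_{1}/b_{\ell}\le1/t$, so that the resulting constant is finite for each $t$ (it must blow up as $t\to0$, since near unconditionality genuinely allows $C_{t}\to\infty$). A secondary subtlety is that the reduction to finitely supported vectors with pairwise distinct moduli must be carried out without circularly invoking a bound on $P_{A}$, which is why only the trivial operator estimate $\left\Vert P_{A}\right\Vert\le\aabs{A}\alpha_{3}$ for the fixed finite set $A$ is used there.
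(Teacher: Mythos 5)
Your proof is correct. Note first that the paper itself does not prove this lemma: it is invoked by citation from \cite[Theorem 2.6]{AAB2023a} and \cite[Proposition 4.5]{DKK2003}, so there is no in-paper argument to compare against. What you have done is reconstruct a self-contained proof of the cited equivalence, and it holds up.

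The two easy implications (nearly unconditional $\Rightarrow$ thresholding bounded, and thresholding bounded $\Rightarrow$ QGLC via $z_\rho=\Ind_{\varepsilon,A}+\rho x$, $\rho\uparrow 1$) are fine. For the substantial implication QGLC $\Rightarrow$ nearly unconditional, your Abel-summation identity $P_{G_\ell}\left(x\right)-b_\ell\Ind_{\varepsilon\left(x\right),G_\ell}=\sum_{p=1}^{\ell-1}\left(b_p-b_{p+1}\right)\Ind_{\varepsilon\left(x\right),G_p}$ is correct, the recursion $h_\ell\le C\left\Vert x\right\Vert+C\sum_{p<\ell}\frac{b_p-b_{p+1}}{b_p}h_p$ really does close by induction to the product bound (the telescoping $\sum_{p<\ell}Ca_pP_{p-1}=P_{\ell-1}-1$ is exactly what makes it work), and $1+s\le e^s$ together with $1-u\le-\ln u$ on $\left(0,1\right]$ gives $h_\ell\le C\left\Vert x\right\Vert\left(b_1/b_\ell\right)^C$. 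The SUCC deduction from QGLC and the final Abel summation over $A$ are also correct, yielding $\phi\left(t\right)\lesssim t^{-\left(C+1\right)}$. One small point of presentation: what you call ``truncation at modulus $1$'' in the reduction step is slightly awkward, since truncating a perturbed vector coefficientwise does not obviously stay close in norm when the support is large; it is cleaner to take $y=P_F\left(x\right)$ for a large finite $F\supset A$ (which stays in $\Cu$ automatically and does not move $P_A$ at all) and then rescale by $\left(1+\eta\right)^{-1}$ before perturbing to distinct moduli. That aside, your concern about circularity in the reduction is well placed and correctly handled by using only the crude $\left\Vert P_A\right\Vert\le\aabs{A}\alpha_3$.

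As for comparison: the cited sources split the work, with \cite{DKK2003} establishing nearly unconditional $\Leftrightarrow$ thresholding bounded (essentially via the submultiplicativity $\phi\left(ab\right)\le 3\phi\left(a\right)\phi\left(b\right)$ recorded in Lemma~\ref{lemmaproduct} of the paper) and \cite{AAB2023a} adding the QGLC characterization. Your single telescoping recursion collapses the whole cycle into one argument and produces an explicit polynomial bound $\phi\left(t\right)\lesssim t^{-\left(C+1\right)}$ directly in terms of the QGLC constant $C$, which the citation route does not make visible. The price is that you do not recover the finer submultiplicative structure of $\phi$ that the paper later uses (Remark~\ref{remarkphiu}); that still has to come from Lemma~\ref{lemmaproduct}.
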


\begin{lemma} \cite[Lemma 2.2]{AAB2023a} \cite[Proposition 4.1]{DKK2003}\label{lemmaproduct}
	Let $\XB$ be a nearly unconditional basis of a Banach space $\XX$. Its unconditionality function $\phi$ satisfies the relation
	\begin{align*}
		&\phi\left(ab\right)\le \phi\left(a\right)+\phi\left(b\right)+ \phi\left(a\right)\phi\left(b\right)\le 3\phi\left(a\right)\phi\left(b\right) &&\forall 0<a,b\le 1.
	\end{align*}
\end{lemma}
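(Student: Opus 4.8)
The plan is to unwind the definition of the near unconditionality function $\phi$ and reduce to the following estimate: for every $x\in\Cu$ and every finite $A\subseteq A(x,ab)$,
\[
\norm{P_A(x)}\le\left(\phi(a)+\phi(b)+\phi(a)\phi(b)\right)\norm{x};
\]
taking the supremum over all such $x$ and $A$ then gives $\phi(ab)\le\phi(a)+\phi(b)+\phi(a)\phi(b)$, while the bound $\phi(a)+\phi(b)+\phi(a)\phi(b)\le 3\phi(a)\phi(b)$ is immediate from $\phi(a),\phi(b)\ge 1$. Before the main argument I would make two preliminary moves. First, I would record the homogeneous reformulation of near unconditionality: if $g\in\XX\setminus\{0\}$ has $\norm{g}_{\ell_\infty}<\infty$ and $D$ is a finite set with $|\xx_n^*(g)|\ge t\,\norm{g}_{\ell_\infty}$ for all $n\in D$, then $\norm{P_D(g)}\le\phi(t)\norm{g}$ — this follows by applying the definition to $g/\norm{g}_{\ell_\infty}\in\Cu$. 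Second, by a standard density-and-perturbation argument (of the kind carried out in the proof of Lemma~\ref{lemmapointwisebounded}) I would reduce to $x$ of finite support; this is the only place I need $A(x,t)$ to be finite.

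Now fix a finitely supported $x\in\Cu$ with $\norm{x}=1$ and a finite $A\subseteq A(x,ab)$ (the case $A=\emptyset$ being trivial), and split $A$ at the threshold $b$: set $B:=A(x,b)=\{n:\,|\xx_n^*(x)|\ge b\}$ and write $A=(A\cap B)\cup(A\setminus B)$. Since $x\in\Cu$ and $A\cap B\subseteq A(x,b)$, near unconditionality at level $b$ gives $\norm{P_{A\cap B}(x)}\le\phi(b)\norm{x}$, and likewise $\norm{P_B(x)}\le\phi(b)\norm{x}$. For the complementary part I would pass to $z:=x-P_B(x)=P_{B^c}(x)$: this is again finitely supported, satisfies $\norm{z}\le(1+\phi(b))\norm{x}$, and has all of its nonzero coefficients of modulus $<b$, so $\norm{z}_{\ell_\infty}<b$. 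Then for $n\in A\setminus B$ we have $\xx_n^*(z)=\xx_n^*(x)$, hence $|\xx_n^*(z)|\ge ab>a\,\norm{z}_{\ell_\infty}$; thus $A\setminus B$ satisfies the hypothesis of the homogeneous near unconditionality for $z$ at level $a$, so $\norm{P_{A\setminus B}(z)}\le\phi(a)\norm{z}$, and since $A\setminus B\subseteq B^c\cap\supp(x)$ we have $P_{A\setminus B}(z)=P_{A\setminus B}(x)$. Combining,
\begin{align*}
\norm{P_A(x)}&\le\norm{P_{A\cap B}(x)}+\norm{P_{A\setminus B}(x)}\\
&\le\phi(b)\norm{x}+\phi(a)\left(1+\phi(b)\right)\norm{x}=\left(\phi(a)+\phi(b)+\phi(a)\phi(b)\right)\norm{x},
\end{align*}
which is the required estimate.

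I expect the individual inequalities to be routine once this decomposition is set up; the step that really makes the argument work — and the one I would be most careful to get right — is splitting at the threshold $b$, so that the tail $z=P_{B^c}(x)$ automatically has $\norm{z}_{\ell_\infty}<b$. That strict inequality is exactly what upgrades the bound $|\xx_n^*(z)|\ge ab$ on $A\setminus B$ to $|\xx_n^*(z)|>a\,\norm{z}_{\ell_\infty}$, which is what a \emph{level-$a$} application of near unconditionality needs (a level-$ab$ application would only be circular). The reduction to finitely supported $x$, needed to make the projection $P_B(x)$ legitimate, is the only other point requiring a word of justification.
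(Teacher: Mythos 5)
The paper does not prove this lemma; it imports it from \cite{DKK2003} (Proposition 4.1) and \cite{AAB2023a} (Lemma 2.2), so there is no in-paper proof to compare against. Your argument is correct and is, to the best of my knowledge, exactly the standard argument from those references: split the projection set at the threshold $b$, control the large-coefficient part $A\cap A(x,b)$ by a direct level-$b$ application, pass to the tail $z=x-P_{A(x,b)}(x)$ (which has $\norm{z}_{\ell_\infty}<b$ and $\norm{z}\le(1+\phi(b))\norm{x}$), and hit the remaining indices with a level-$a$ application to the rescaled $z$. The two points you single out as the ones needing care are indeed the right ones: the strict drop $\norm{z}_{\ell_\infty}<b$ (which is what prevents the argument from being circular at level $ab$), and the reduction to finitely supported $x$ so that $A(x,b)$ is finite and $P_{A(x,b)}(x)$ is a legitimate projection. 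The latter density-and-perturbation step is of exactly the type the paper itself invokes (e.g.\ in the proof of Lemma~\ref{lemmasortoftqg3}, where it cites \cite{BBG2024}), and is routine; I would only add that when perturbing you should keep the coefficients on $A$ fixed so that $A\subseteq A(y,ab)$ is preserved, which is easy since $A$ is finite.
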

Now we use Lemma~\ref{lemmaequivqglc} to prove that VPQG bases are nearly unconditional. 

\begin{lemma}\label{lemma: VPQG->NU}Let $\XB$ be a basis of a Banach space $\XX$. If $\XB$ is de la Vallée-Poussin quasi-greedy, it is nearly unconditional. 
\end{lemma}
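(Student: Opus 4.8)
By Lemma~\ref{lemmaequivqglc} it suffices to prove that $\XB$ is thresholding bounded, i.e.\ that for each $t\in(0,1]$ there is $C_t$ with $\|P_{A(x,t)}(x)\|\le C_t\|x\|$ for every $x\in\Cu$. I would reach this in two stages: first upgrade VPQG to CQG, and then extract thresholding boundedness from CQG. Throughout, fix $K$ with $\XB$ being $K$-VPQG; by Remark~\ref{remarknormalized} we may also assume $\alpha_1=\alpha_2=\alpha_3=1$.

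\textbf{Step 1: VPQG $\Rightarrow$ CQG.} Fix $x\in\XX$, a greedy ordering $\cO=(k_j)$ for $x$, and $n\in\NN$. Summing over the greedy sums one has $nC_n^{g,\cO}(x)=\sum_{l=1}^{n}G_l^{\cO}(x)$, and this yields the two elementary identities
\[ C_{2m}^{g,\cO}(x)=\tfrac12 C_m^{g,\cO}(x)+\tfrac12\bigl(2C_{2m}^{g,\cO}(x)-C_m^{g,\cO}(x)\bigr),\qquad C_n^{g,\cO}(x)=\tfrac{n-1}{n}C_{n-1}^{g,\cO}(x)+\tfrac1n G_n^{\cO}(x). \]
The second term on the right of the first identity has norm $\le K\|x\|$ by VPQG, and $\|\tfrac1n G_n^{\cO}(x)\|\le\|x\|$ since $|\xx_{k_j}^*(x)|\le\|x\|_{\ell_\infty}\le\|x\|$. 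Combining the two identities (splitting according to the parity of $n$) gives, for every $n\ge1$, $\|C_n^{g,\cO}(x)\|\le\tfrac12\|C_{\lfloor n/2\rfloor}^{g,\cO}(x)\|+(\tfrac12 K+1)\|x\|$, with the convention $C_0^{g,\cO}=0$ and using $\|C_1^{g,\cO}(x)\|=\|G_1^{\cO}(x)\|\le\|x\|$. Iterating this recursion (a dyadic descent) yields $\|C_n^{g,\cO}(x)\|\le(K+2)\|x\|$ for every $n$ and every greedy ordering; hence $\XB$ is $(K+2)$-CQG.

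\textbf{Step 2: CQG $\Rightarrow$ thresholding bounded.} Fix $t$ and $x\in\Cu$; by a standard perturbation/density argument (using that $\XB$ is a strong Markushevich basis by Step~1 and Proposition~\ref{propositioncarac}) reduce to the case where $A:=A(x,t)$ is finite and the nonzero moduli $\{|\xx_i^*(x)|\}$ are pairwise distinct, so that $x$ has a unique greedy ordering $\cO$ whose first $n:=|A|$ terms enumerate $A$. The aim is to squeeze a genuine \emph{truncation} estimate out of CQG: one checks that $A$ is itself a greedy set of the Cesàro sum $C_{2n}^{g,\cO}(x)$ (with threshold $t/2$), that $\|C_{2n}^{g,\cO}(x)\|\le(K+2)\|x\|$, and---iterating this while peeling off coefficient levels, together with the representation of a flat coefficient vector $\mu\,\Ind_{\varepsilon(x),B}$ as a convex combination of the sign-changes of $P_B(x)$---that $\XB$ is truncation quasi-greedy, i.e.\ $\min_{i\in B}|\xx_i^*(x)|\,\|\Ind_{\varepsilon(x),B}\|\le C\|x\|$ for every greedy set $B$. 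Granting this, the layer-cake identity
\[ P_{A(x,t)}(x)=t\,\Ind_{\varepsilon(x),A(x,t)}+\int_t^1\Ind_{\varepsilon(x),A(x,s)}\,ds \]
together with $\|\Ind_{\varepsilon(x),A(x,s)}\|\le C\|x\|/s$ gives $\|P_{A(x,t)}(x)\|\le C\|x\|\bigl(1+\log(1/t)\bigr)$, so $C_t:=C\bigl(1+\log(1/t)\bigr)$ works, and $\XB$ is thresholding bounded---hence nearly unconditional by Lemma~\ref{lemmaequivqglc}.

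\textbf{Main obstacle.} Step~1 is clean; the real work is Step~2, namely turning the merely \emph{averaged} control supplied by CQG into the pointwise bound on the flat blocks $\Ind_{\varepsilon(x),A(x,s)}$ (truncation quasi-greediness). The naive estimates are circular: $\|P_{A(x,t)^c}(x)\|=\|x-P_{A(x,t)}(x)\|$ reintroduces exactly the quantity to be bounded, and a bare greedy projection $P_{A(x,t)}(x)=G_n^{\cO}(x)$ is only controlled by CQG up to a factor of $n$. Breaking the circularity should be done by working with the capped element $\sum_{i\in B}\mu\varepsilon_i\xx_i+P_{B^c}(x)$ (as in the usual derivation of truncation quasi-greediness from quasi-greediness) and carefully tracking the $O(1/n)$-type discrepancy between its Cesàro sums and $\mu\,\Ind_{\varepsilon(x),B}$, so that only norms of the original element $x$ (and not of its projections) appear in the final estimate.
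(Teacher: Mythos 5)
Your reduction to quasi-greediness for largest coefficients via Lemma~\ref{lemmaequivqglc} matches the paper's framing, and Step~1 is a correct and tidy observation: the identity $nC_n^{g,\cO}(x)=\sum_{l=1}^n G_l^{\cO}(x)$ does give $C_n=\frac{n-1}{n}C_{n-1}+\frac1n G_n$, the dyadic recursion $\|C_n\|\le\frac12\|C_{\lfloor n/2\rfloor}\|+(\frac{K}{2}+1)\|x\|$ follows, and it iterates to $\|C_n\|\le (K+2)\|x\|$. So VPQG $\Rightarrow$ CQG is proved. However, the paper does not use this intermediate step, and the real content of the lemma lies in your Step~2, which you leave unresolved: you yourself flag the ``main obstacle'' of turning averaged Ces\`aro control into a flat estimate on $\Ind_{\varepsilon(x),A(x,t)}$, and the sketch that follows (``$A$ is a greedy set of $C_{2n}^{g,\cO}(x)$'', ``iterating while peeling off coefficient levels'', ``capped element'') does not actually close the circularity you describe. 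In particular, iterating the Ces\`aro operator makes the weights on $A$ drift \emph{further} from constant, not closer, so it is not clear that the iteration you propose converges to a truncation-type bound.

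The paper's proof sidesteps this obstacle entirely by a symmetrization argument carried out directly from VPQG. Fix $A$ with $|A|=2n$, $\varepsilon\in\EE^A$, and $x\in\Cu$ supported off $A$. For each permutation $\pi$ of $A$ one chooses a greedy ordering for $\Ind_{\varepsilon,A}+x$ that lists $A$ in the order prescribed by $\pi$; the VPQG bound then controls $\|\Ind_{\varepsilon,A_\pi}+\sum_{i=1}^n\frac{n+1-i}{n}\varepsilon_{k_{n+i}^\pi}\xx_{k_{n+i}^\pi}\|$ for every $\pi$, where $A_\pi$ is the first half of $A$ under $\pi$. Summing over all $(2n)!$ permutations, each coordinate $l\in A$ picks up the same total weight $\frac{(2n-1)!(3n+1)}{2}$, so the sum is exactly a scalar multiple of $\Ind_{\varepsilon,A}$; the triangle inequality then gives $\|\Ind_{\varepsilon,A}\|\le\frac{4C}{3}\|\Ind_{\varepsilon,A}+x\|$, and odd $|A|$ is handled by dropping one point. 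This averaging over the symmetric group is precisely the mechanism that flattens the Ces\`aro weights --- the missing idea in your Step~2 --- and it is applied to $\Ind_{\varepsilon,A}+x$ rather than to a general $x$, which is what avoids the circular projections you were worried about.
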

\begin{proof}
	Pick $C>0$ so that $\XB$ is $C$-VPQG. By Lemma~\ref{lemmaproduct}, we only need to show that $\XB$ is quasi-greedy for largest coefficients. To this end, fix a nonempty set $A\in\mathbb N^{<\infty}$, $\varepsilon\in \EE^{A}$ and $x\in \Cu$ so that $A\cap \supp(x)=\emptyset$. Suppose first that there is $n\in\NN$ such that $2n=\left\vert A\right\vert $, and let $\cS_{A}$ be the set of biyections on $A$. For each $\pi\in \cS_A$, let $\preceq_{\pi}$ be the only total ordering on $A$ such that for $k,l\in A$, $$k\preceq_{\pi} l \Longleftrightarrow \pi(k)\le \pi(l).$$
	For each $1\le j\le 2n$, each $l\in A$ and each $\pi\in \cS_A$, we say that $o(\pi,l)=j$ if and only if 
	$$
	\left\vert k\in A: k\preceq_{\pi} l\right\vert=j
	$$
	(in other words, if $l$ is the $j$-th. element of $A$ under the ordering $\preceq_{\pi}$).\\
	For each $\pi\in \cS_A$, pick a greedy ordering $\mathcal{O}_{\pi}=\left(k_{j}^{\pi}\right)_{j\in \NN}$ for $\Ind_{\varepsilon, A}+x$ so that $\left\lbrace k_j^{\pi}\right\rbrace_{1\le j\le 2n}=A$ and, for each $1\le j\le 2n$ and each $l\in A$, 
	$$k_{j}^{\pi}=l\Longleftrightarrow o(\pi,l)=j.$$
	For each $\pi\in \cS_{A}$, let $A_{\pi}:=\left\lbrace k_j^{\pi}\right\rbrace_{1\le j\le n}$. Since 
	\begin{align*}
		G_{n}^{\cO_{\pi}}\left(\Ind_{\varepsilon, A}+x\right)=&\Ind_{\varepsilon, A_{\pi}},
	\end{align*}
	by Remark~\ref{remarkVPQG} and hypothesis we have
	\begin{align}
		&\left\Vert \Ind_{\varepsilon, A_{\pi}}+\sum_{j=1}^{n}\left(\frac{n+1-j}{n}\right)\varepsilon_{k_{n+j}^{\pi}}\xx_{k_{n+j}^{\pi}}\right\Vert\nonumber\\
		=&\left\Vert G_{n}^{\cO_{\pi}}\left(\Ind_{\varepsilon, A}+x\right)+\sum_{j=1}^{n}\left(\frac{n+1-j}{n}\right)\varepsilon_{k_{n+j}^{\pi}}\xx_{k_{n+j}^{\pi}}\right\Vert\le C\left\Vert \Ind_{\varepsilon, A}+x\right\Vert.  \label{bound1VPQG}
	\end{align}
	For each $l\in A$, we have
	\begin{align*}
		&\xx_l^*\left(\sum_{\pi\in \cS_A}\left(\Ind_{\varepsilon, A_{\pi}}+\sum_{i=1}^{n}\left(\frac{n+1-i}{n}\right)\varepsilon_{k_{n+i}^{\pi}}\xx_{k_{n+i}^{\pi}}\right)\right)\\
		&\left\vert\left\lbrace \pi\in  \cS_A: o\left(\pi,l\right)\le n \right\rbrace \right\vert \varepsilon_l+ \xx_l^*\left(\sum_{j=n+1}^{2n}\sum_{\substack{\pi\in \cS_A\\o\left(\pi,l\right)=j }}\left(\sum_{i=1}^{n}\left(\frac{n+1-i}{n}\right)\varepsilon_{k_{n+i}^{\pi}}\xx_{k_{n+i}^{\pi}}\right)\right)\\
		&=n\left(2n-1\right)!\varepsilon_l+\sum_{j=1}^{n}\sum_{\substack{\pi\in \cS_A\\o\left(\pi,l\right)=n+j }}\left(\sum_{i=1}^{n}\left(\frac{n+1-i}{n}\right)\varepsilon_{k_{n+i}^{\pi}}\xx_l^*\left(\xx_{k_{n+i}^{\pi}}\right)\right)\\
		&=n\left(2n-1\right)!\varepsilon_l+\sum_{j=1}^{n}\sum_{\substack{\pi\in \cS_A\\o\left(\pi,l\right)=n+j }}\left(\frac{n+1-j}{n}\right)\varepsilon_l\\
		&=n\left(2n-1\right)!\varepsilon_l+\left(2n-1\right)!\varepsilon_l\sum_{j=1}^{n}\left(\frac{n+1-j}{n}\right)=\varepsilon_l\left(2n-1\right)!\left(n+\frac{n+1}{2}\right).
	\end{align*}
	Hence, 
	\begin{align*}
		\sum_{\pi\in \cS_A}\left(\Ind_{\varepsilon, A_{\pi}}+\sum_{i=1}^{n}\left(\frac{n+1-i}{n}\right)\varepsilon_{k_{n+i}^{\pi}}\xx_{k_{n+i}^{\pi}}\right)=&\left(\frac{\left(2n-1\right)!\left(3n+1\right)}{2}\right)\Ind_{\varepsilon,A}. 
	\end{align*}
	From this and \eqref{bound1VPQG}, by the triangle inequality we obtain
	\begin{align*}
		\left\Vert\left(\frac{\left(2n-1\right)!\left(3n+1\right)}{2}\right)\Ind_{\varepsilon,A}\right\Vert\le& \sum_{\pi\in \cS_A}\left\Vert \Ind_{\varepsilon, A_{\pi}}+\sum_{i=1}^{n}\left(\frac{n+1-i}{n}\right)\varepsilon_{k_{n+i}^{\pi}}\xx_{k_{n+i}^{\pi}}\right\Vert\\
		\le& \left(2n\right)!C\left\Vert \Ind_{\varepsilon,A}+x\right\Vert. 
	\end{align*}
	Hence, 
	\begin{align*}
		\left\Vert\Ind_{\varepsilon, A}\right\Vert\le& \frac{4nC}{3n+1}\left\Vert \Ind_{\varepsilon,A}+x\right\Vert\le \frac{4C}{3} \left\Vert \Ind_{\varepsilon,A}+x\right\Vert.
	\end{align*}
	Now suppose that $\left\vert A\right\vert =2n+1$ for some $n\in \NN$, pick $A_0\subset A$ with $\left\vert A_0\right\vert =2n$ and $j_0\in A\setminus A_0$, and let $y:=x+\varepsilon_{j_0}\xx_{j_0}$. By the previous case, 
	\begin{align*}
		\left\Vert \Ind_{\varepsilon, A}\right\Vert\le& \left\Vert \Ind_{\varepsilon,A_0}\right\Vert+\left\Vert \xx_{j_0}\right\Vert \le    \frac{4C}{3} \left\Vert \Ind_{\varepsilon,A_0}+y\right\Vert+\alpha_1\left\vert \xx_{j_0}^*\left(\Ind_{\varepsilon, A}+x\right)\right\vert\\
		\le&\left( \frac{4C}{3}+\alpha_1\alpha_2\right) \left\Vert \Ind_{\varepsilon,A}+x\right\Vert.
	\end{align*}
	Finally, if $\left\vert A\right\vert=1$, we have
	\begin{align*}
		\left\Vert \Ind_{\varepsilon, A}\right\Vert\le& \alpha_1\alpha_2\left\Vert \Ind_{\varepsilon, A}+x\right\Vert. 
	\end{align*}
	Comparing the above upper bounds, we conclude that $\XB$ is $\left( \frac{4C}{3}+\alpha_1\alpha_2\right)$-QGLC. 
\end{proof}
\begin{remark}\label{remarkUCC}\rm Note that Lemma~\ref{lemma: VPQG->NU} entails (by convexity) that if $\XB$ is $K$-VPQG, it is $K_2$-UCC, with 
	$$
	K_2= 2\kappa \left(\alpha_1\alpha_2+  \frac{4K}{3}\right). 
	$$
\end{remark}
For our study of VPQG and CQG bases, it is convenient to define a function that is equivalent to $\phi$, but in which we maximize over signs instead of projections. More precisely, if $\XB$ is a nearly unconditional basis, we define $\phi_u\left(t\right)$ as the minimum $C\ge 1$ such that
\begin{align*}
	\left\Vert \sum_{n\in A}a_n\xx_n^*\left(x\right)\xx_n \right\Vert\le&C \left\Vert x\right\Vert, 
\end{align*}
where $0<t\le 1$ and the following hold: 
\begin{align*}
	&A\in \NN^{<\infty};&& A\subset \supp\left(x\right);\\
	&\left\vert a_n\right\vert\le 1\;\forall n\in A;&&\min_{n\in A}\left\vert \xx_n^*\left(x\right)\right\vert\ge  t\left\Vert x\right\Vert_{\ell_{\infty}}. 
\end{align*}
\begin{remark}\label{remarkphiu}\rm By convexity and definitions, it is easy to check that 
	\begin{align*}
		\phi\left(t\right)\le&\phi_u\left(t\right)\le 2\kappa\phi\left(t\right) \qquad\forall 0<t\le 1. 
	\end{align*}Combining this with Lemma~\ref{lemmaproduct}, we obtain
	\begin{align*}
		\phi_u\left(ab\right)\le& 6\kappa\phi\left(a\right)\phi\left(b\right)\qquad \forall  0<a,b\le 1.
	\end{align*}
\end{remark}

The following elementary lemma will be useful in the sequel.

\begin{lemma}\label{lemmasortofqg}Let $\XB$ be a basis of $\XX$. For all $x\in \XX$, $m\in \NN$, and $\cO=\left(k_j\right)_{j\in \NN}$ a greedy ordering for $x$, we have
	\begin{align}
		&2C_{2m}^{g,\cO}\left(x\right)-C_{m}^{g,\cO}\left(x\right)=G_{m}^{\cO}\left(x\right)+ C_{m}^{g,\cO_m}\left(x-G_m^{\cO}\left(x\right)\right). \label{remark12v}
	\end{align}
	Hence, If $\XB$ is $K$-VPQG, we have
	\begin{align*}
		\left\Vert G_m^{\cO}\left(x\right)+C_{m}^{g,\cO_m}\left(x-G_m^{\cO}\left(x\right)\right)\right\Vert\le& K\left\Vert x\right\Vert
	\end{align*}
	and then 
	\begin{align*}
		\left\Vert x-G_m^{\cO}\left(x\right)-C_{m}^{g,\cO_m}\left(x-G_m^{\cO}\left(x\right)\right)\right\Vert\le& \left(K+1\right)\left\Vert x\right\Vert.
	\end{align*}
\end{lemma}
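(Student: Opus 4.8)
The plan is to obtain the identity \eqref{remark12v} directly from equation \eqref{VPQGb2} in Remark~\ref{remarkVPQG}, after recognizing the tail sum appearing there as a greedy Cesàro sum of the remainder $x-G_m^{\cO}\left(x\right)$ taken with respect to the shifted ordering $\cO_m$. Concretely, \eqref{VPQGb2} (with $n=m$) gives
\[
2C_{2m}^{g,\cO}\left(x\right)-C_{m}^{g,\cO}\left(x\right)=G_{m}^{\cO}\left(x\right)+\sum_{j=1}^{m}\left(\frac{m+1-j}{m}\right)\xx_{k_{m+j}}^*\left(x\right)\xx_{k_{m+j}},
\]
so it suffices to show that the last sum equals $C_{m}^{g,\cO_m}\left(x-G_m^{\cO}\left(x\right)\right)$.

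For this, I would unwind the definitions. Writing $z:=x-G_m^{\cO}\left(x\right)$, by definition $\cO_m=\left(k_{j+m}\right)_{j\in \NN}$ is a greedy ordering for $z$, so
\[
C_{m}^{g,\cO_m}\left(z\right)=\sum_{j=1}^{m}\left(\frac{m+1-j}{m}\right)\xx_{k_{j+m}}^*\left(z\right)\xx_{k_{j+m}}.
\]
The only observation needed is that $\xx_{k_{j+m}}^*\left(z\right)=\xx_{k_{j+m}}^*\left(x\right)$ for $1\le j\le m$: since the terms of a greedy ordering are pairwise distinct (because $\left\{k_1,\dots,k_m\right\}$ has cardinality $m$), we have $k_{j+m}\notin \left\{k_1,\dots,k_m\right\}\supseteq \supp\left(G_m^{\cO}\left(x\right)\right)$, hence $\xx_{k_{j+m}}^*\left(G_m^{\cO}\left(x\right)\right)=0$ and $\xx_{k_{j+m}}^*\left(z\right)=\xx_{k_{j+m}}^*\left(x\right)$. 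Substituting this into the previous display and comparing with the tail sum in \eqref{VPQGb2} yields \eqref{remark12v}.

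Finally, the two norm estimates follow with no further work: if $\XB$ is $K$-VPQG, then by definition the left-hand side of \eqref{remark12v} has norm at most $K\left\Vert x\right\Vert$, which is exactly the first displayed inequality; and the second follows from the first together with the triangle inequality $\left\Vert x-w\right\Vert\le \left\Vert x\right\Vert+\left\Vert w\right\Vert$, applied with $w=G_m^{\cO}\left(x\right)+C_{m}^{g,\cO_m}\left(x-G_m^{\cO}\left(x\right)\right)$. I do not anticipate any genuine obstacle here: the argument is purely a bookkeeping exercise with the definitions and \eqref{VPQGb2}, the only mild care being the verification that passing from $x$ to the remainder $x-G_m^{\cO}\left(x\right)$ does not change the values of the functionals $\xx_{k_{j+m}}^*$ indexed by the tail of the ordering.
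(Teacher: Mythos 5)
Your proof is correct and takes essentially the same route as the paper: both start from the expansion \eqref{VPQGb2} in Remark~\ref{remarkVPQG} and identify the tail sum with $C_{m}^{g,\cO_m}\bigl(x-G_m^{\cO}(x)\bigr)$, then use the VPQG bound and the triangle inequality. The only difference is that you spell out the observation $\xx_{k_{m+j}}^*(x)=\xx_{k_{m+j}}^*\bigl(x-G_m^{\cO}(x)\bigr)$, which the paper uses silently.
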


\begin{proof}
	By Remark~\ref{remarkVPQG}, 
	\begin{align*}
		&2C_{2m}^{g,\cO}\left(x\right)-C_{m}^{g,\cO}\left(x\right)\\
		&=G_{m}^{\cO}\left(x\right)+\sum_{j=1}^{m}\left(\frac{m+1-j}{m}\right)\xx_{k_{m+j}}^*\left(x-G_m^{\cO}\left(x\right)\right)\xx_{k_{m+j}}.
	\end{align*}
	Since $\cO_m$ is a greedy ordering for $x-G_m^{\cO}\left(x\right)$, we obtain 
	\begin{align}
		&2C_{2m}^{g,\cO}\left(x\right)-C_{m}^{g,\cO}\left(x\right)=G_{m}^{\cO}\left(x\right)+ C_{m}^{g,\cO_m}\left(x-G_m^{\cO}\left(x\right)\right), \label{simple4}
	\end{align}
	so 
	\begin{align*}
		\left\Vert G_m^{\cO}\left(x\right)+C_{m}^{g,\cO_m}\left(x-G_m^{\cO}\left(x\right)\right)\right\Vert\le& K\left\Vert x\right\Vert, 
	\end{align*}
	and then
	\begin{align*}
		\left\Vert x-G_m^{\cO}\left(x\right)-C_{m}^{g,\cO_m}\left(x-G_m^{\cO}\left(x\right)\right)\right\Vert\le& \left(K+1\right)\left\Vert x\right\Vert.
	\end{align*}
\end{proof}
\color{black}
The next lemma allows us  to associate to each VPQG basis a threshold function that is more general than $\phi$ or $\phi_u$; this function is used in several of the proofs in this paper.

\begin{lemma}\label{lemmasortoftqg3}Let $\XB$ be a VPQG basis of a Banach space $\XX$, and let $\preceq$ be the order defined on $\left[0,\infty\right)\times \left[0,1\right]$ by 
	\begin{align*}
		\left(t,s\right)\preceq \left(t',s'\right)\Longleftrightarrow t\le t' \text{ and } s\ge s'. 
	\end{align*}
	Define a function $\Psi:\left[0,\infty\right)\times \left(0,1\right]\rightarrow \left[1,\infty \right]$ as follows: $\Psi\left(t,s\right)$ is the minimum $1\le C\le \infty$ for which 
	\begin{align}
		\left\Vert \sum_{j\in \supp(y)} a_j \xx_j^*(y)\xx_j\right\Vert\le& C\left\Vert  x+y+z\right\Vert \label{sumtobound3}
	\end{align}
	whenever the following conditions hold: 
	\begin{enumerate}[\rm 1)]
		\item \label{iL3a} $x$, $y$ and $z$ have pairwise disjoint support.
		\item \label{iiL3a} $x\rhd y\rhd z$.
		\item \label{iiiL3a} $\left\vert \supp\left(x\right)\right\vert\le t\left\vert \supp(y)\right\vert$. 
		\item $y$ has finite support, and $\osc\left(y,\supp\left(y\right)\right) \le s^{-1}$. 
		\item \label{iiiiL3a} $\left\vert a_j\right\vert\le 1$ for all $j\in \supp(y)$. 
	\end{enumerate}
	Then $\Psi$ is non-decreasing for the ordering $\preceq$ and finite. Moreover, if $\XB$ is $K$-VPQG, then 
	\begin{align}
		&\Psi\left(t,s\right)\le 2\alpha_3+ \Psi\left(1,s\right) \left(K+1\right)^{2+\log_2t} &&\forall 0<s\le 1\;\forall t>1,\nonumber %\label{longbound4}
	\end{align} 
	and there is an absolute constant $C$ such that 
	\begin{align*}
		& \Psi\left(1,s\right)\le C \alpha_1\alpha_2\alpha_3 \frac{1}{s^3}\left(K\phi\left(\frac{1}{4}\right)\phi\left(s\right)\right)^4 &&\forall 0<s\le 1.
	\end{align*}
\end{lemma}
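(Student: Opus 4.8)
The plan is to establish the three assertions in order: monotonicity of $\Psi$ with respect to $\preceq$, finiteness, and the two quantitative bounds. Monotonicity is essentially a bookkeeping observation: if $(t,s)\preceq(t',s')$, then every configuration $(x,y,z)$ admissible for $(t,s)$ is also admissible for $(t',s')$ — the support-size constraint $\abs{\supp(x)}\le t\abs{\supp(y)}\le t'\abs{\supp(y)}$ is preserved, and the oscillation constraint $\osc(y,\supp(y))\le s^{-1}\le (s')^{-1}$ is preserved — so the infimum defining $\Psi(t',s')$ is taken over a larger family, giving $\Psi(t,s)\le\Psi(t',s')$. Hence it suffices to bound $\Psi(1,s)$ for all $0<s\le 1$ and to derive the doubling-type estimate relating $\Psi(t,s)$ for $t>1$ to $\Psi(1,s)$.

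For the estimate $\Psi(t,s)\le 2\alpha_3+\Psi(1,s)(K+1)^{2+\log_2 t}$ when $t>1$, I would argue by a dyadic splitting of $\supp(y)$ according to the greedy ordering. Given an admissible configuration for $(t,s)$ with $\abs{\supp(x)}\le t\abs{\supp(y)}$, write $\abs{\supp(y)}=N$ and $\abs{\supp(x)}=M\le tN$. The idea is to peel off $M$ coordinates of $y$ at a time (the largest remaining ones, in the greedy order): at each stage one has a term of the form $G_M^{\cO}(\,\cdot\,)+C_M^{g,\cO_M}(\,\cdot\,)$ living on the ``head'' $x$ plus a block of $y$ of size $M\ge\abs{\supp(x)}$, so Lemma~\ref{lemmasortofqg} applies with constant $K$ (after absorbing the coefficients $a_j$, which are bounded by $1$, via the $\phi_u$/UCC machinery — this is where Remark~\ref{remarkUCC} and the definition of $\udf$ enter, since the Cesàro-type coefficients $\frac{m+1-j}{m}$ are in $[0,1]$). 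Iterating $\lceil N/M\rceil\le\lceil 1/\,\rceil$ — more precisely $\lceil\log_2(N/M)\rceil+O(1)\le 2+\log_2 t$ — times and using $x\rhd y\rhd z$ to control each tail by $\norm{x+y+z}$, the constant compounds multiplicatively as $(K+1)$ per halving, giving the exponent $2+\log_2 t$; the additive $2\alpha_3$ absorbs the final one or two leftover coordinates handled trivially by $\norm{\xx_n}\norm{\xx_n^*}\le\alpha_3$.

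For the bound on $\Psi(1,s)$, where now $\abs{\supp(x)}\le\abs{\supp(y)}$ and $\osc(y,\supp(y))\le s^{-1}$, the plan is to reduce to near-unconditionality. By Lemma~\ref{lemma: VPQG->NU} the basis is nearly unconditional with function $\phi$, and by Remark~\ref{remarkphiu} it has the sign-version $\phi_u$ with $\phi_u(ab)\le 6\kappa\phi(a)\phi(b)$. First I would normalize so that $\min_{j\in\supp(y)}\abs{\xx_j^*(y)}$ is comparable to $\norm{y}_{\ell_\infty}$ up to the factor $s$; then $y$ has all its coordinates at level $\ge s\norm{y}_{\ell_\infty}\ge (s/\alpha_3)\,\text{(something)}\cdot\norm{\cdot}$, so the sum $\sum a_j\xx_j^*(y)\xx_j$ is controlled by $\phi_u(s\cdot\text{const})\norm{y}$ — and then one needs $\norm{y}\lesssim\norm{x+y+z}$. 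This last comparison is the crux: since $x\rhd y\rhd z$, the coordinates of $x$ dominate those of $y$, and one uses a truncation/QGLC-type argument together with the TQG-flavoured estimates and the $\frac14$-threshold (hence the factor $\phi(\frac14)$) to peel $x$ off, showing $\norm{y+z}$ and then $\norm{y}$ are each $\lesssim\phi(\tfrac14)\cdot\norm{x+y+z}$ up to $\alpha$-constants; the extra powers of $1/s$ and $\phi(s)$ come from iterating the oscillation-splitting of $y$ into $O(\log(1/s))$ blocks of comparable-size coefficients and applying UCC on each. Collecting the constants — a fixed number of applications of Lemma~\ref{lemmaproduct}, Remark~\ref{remarkphiu}, Remark~\ref{remarkUCC} and $\alpha$-bounds — yields $\Psi(1,s)\le C\alpha_1\alpha_2\alpha_3\, s^{-3}\bigl(K\phi(\tfrac14)\phi(s)\bigr)^4$ for an absolute $C$.

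The main obstacle I expect is the $\Psi(1,s)$ bound, specifically getting the comparison $\norm{y}\lesssim\norm{x+y+z}$ with the claimed dependence on $K$, $\phi(\tfrac14)$ and $\phi(s)$: one must carefully combine the VPQG hypothesis (via Lemma~\ref{lemmasortofqg}, which is the only place the Cesàro structure is genuinely used) with near-unconditionality to strip the dominating block $x$ without losing more than a bounded number of factors, and track how the oscillation bound $s^{-1}$ on $y$ forces the $s^{-3}\phi(s)^4$ growth through the dyadic decomposition of $y$'s coefficient levels. The $t>1$ estimate, by contrast, is a clean induction once Lemma~\ref{lemmasortofqg} is in hand.
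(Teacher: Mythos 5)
Your monotonicity observation is correct and matches the paper, and your sketch for $t>1$ — iterating Lemma~\ref{lemmasortofqg} to halve $|\supp(x)|$ roughly $\log_2 t$ times, each iteration costing a factor $(K+1)$, with a trivial $\alpha_3$-cost for the boundary cases — is essentially the paper's argument for that part. But the plan for $\Psi(1,s)$ contains a genuine gap, and it is precisely the step you yourself flag as ``the crux.''

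You propose to first prove $\left\Vert y\right\Vert\lesssim\phi\!\left(\tfrac14\right)\left\Vert x+y+z\right\Vert$ (peeling $x$ off via a QGLC/TQG/near-unconditionality argument) and then apply $\phi_u(s)$ to $y$ alone. This cannot work with the claimed constants. To remove the ``head'' $x$ from $u_0=x+y+z$ by near-unconditionality you must take $P_B$ (or equivalently $I-P_{\supp(x)}$) of $u_0$, and the applicable threshold in the near-unconditionality bound is $\min_{j\in B}\left\vert\xx_j^*(y)\right\vert / \left\Vert u_0\right\Vert_{\ell_\infty} = \min_{j\in B}\left\vert\xx_j^*(y)\right\vert / \left\Vert x\right\Vert_{\ell_\infty}$. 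None of the hypotheses of the lemma control the ratio $\left\Vert y\right\Vert_{\ell_\infty}/\left\Vert x\right\Vert_{\ell_\infty}$ — it can be arbitrarily small — so $\phi$ of that threshold is unbounded, whereas the bound to prove involves only $\phi(1/4)$ and $\phi(s)$. QGLC and TQG have the same limitation: both apply only to greedy sets, not to the ``middle band'' $B$ sitting below $x$ and above $z$. In short, $\left\Vert y\right\Vert\lesssim\left\Vert x+y+z\right\Vert$ with a constant depending only on $K$, $\phi(1/4)$, $\phi(s)$, $\alpha_i$ is exactly the nontrivial content of the lemma (the case $a_j\equiv 1$), and it cannot be front-loaded by an independent near-unconditionality argument.

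The paper's proof avoids this by using the VPQG hypothesis \emph{before} touching $y$ alone: applying Lemma~\ref{lemmasortofqg} with $m=m_1$, where $m_1\ge|\supp(x)|$, gives $u_1:=\sum_{j=1}^{m_1}\frac{j-1}{m_1}\,\xx_{k_{m_1+j}}^*(y)\xx_{k_{m_1+j}}+z$ with $\left\Vert u_1\right\Vert\le(K+1)\left\Vert u_0\right\Vert$, in which $x$ no longer appears. Now $\left\Vert u_1\right\Vert_{\ell_\infty}\le\left\Vert y\right\Vert_{\ell_\infty}$, so near-unconditionality applied to $u_1$ works at a threshold $\ge s/4$ (the $1/4$ comes from restricting to the indices $m_1/3<j\le m_1$ where the Cesàro weight $\frac{j-1}{m_1}\ge\frac14$), producing the $\phi(1/4)\phi(s)$ factors honestly. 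The price is that this only controls one block $B_1$ of roughly $\frac23 m_1$ coordinates of $y$; the remaining bulk of the proof (constructing $u_2$ by levelling $y$ on $B_1$, constructing $u_3$ with $B_3$, etc.) is devoted to bootstrapping from this one controlled block to all of $\supp(y)$, and it is here that the remaining powers of $K$, $\phi$, and $1/s$ accumulate. Your plan treats Lemma~\ref{lemmasortofqg} as an afterthought (``strip the dominating block $x$ without losing more than a bounded number of factors''), but in fact it is the opening move, and nothing else in your toolkit can replace it.
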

(note that $\Psi\left(0,s\right)=\phi_u\left(s\right)<\infty$ by Lemma~\ref{lemma: VPQG->NU}). 
\begin{proof}
	Clearly, $\Psi$ is clearly non-decreasing for the ordering $\preceq$, so to prove the lemma we only need to prove that $\Psi$ is finite and that the upper bounds in the statement hold. 
	To this end, first we prove the result for $t=1$, which will also prove that $\Psi$ is finite when $t\le 1$. Note that: 
		\begin{itemize}
		\item If $x=0$, we can use $\phi_u\left(s\right)$ as an upper bound in \eqref{sumtobound3}, so we may assume from now on that $x\not=0$. 
		\item The case $y=0$ is immediate, so we may also assume that $y\not=0$. 
		\item $\XB$ is a Markushevich basis by Proposition~\ref{propositioncaracVPQG}, so $\supp(x)\not=\emptyset$ and $\supp(y)\not=\emptyset$. From \ref{iiL3a} we also obtain that $x$ has finite support.
		\item By a standard density argument (or see \cite[Lemma 3.7]{BBG2024}), we may assume that $z$ has finite support.
	\end{itemize}
	Fix $0<s\le 1$,  $x, y, z\in \XX$, and scalars $\left(a_j\right)_{j\in\supp\left(y\right)}$ so that \ref{iL3a}-\ref{iiiiL3a} hold. Let $A:=\supp\left(x\right)$,  $B=\supp\left(y\right)$, $m:=\left\vert A\right\vert$ and $n:=\left\vert B\right\vert$. Since $t=1$, $m\le n$. Suppose first $n+m=2m_1$ with $m_1\in\NN$, $m\le m_1\le n$. If $n\le 25$, then 
	\begin{align}
		\left\Vert \sum_{j\in B}a_j\xx_j^*\left(y\right)\xx_j\right\Vert\le& 25\alpha_3\left\Vert x+y+z\right\Vert. \label{firstboundhere}
	\end{align}
	If $m_1\le 12$, then $m\le 12$ so 
	\begin{align}
		\left\Vert \sum_{j\in B}a_j\xx_j^*\left(y\right)\xx_j\right\Vert\le&\phi_u\left(s\right)\left\Vert y+z\right\Vert\le \left(12\alpha_3+1\right)\phi_u\left(s\right)\left\Vert x+y+z\right\Vert. \label{secondboundhere}
	\end{align}
	If $m_1>12$ and $n>25$, let $u_0:=x+y+z$, and pick $\cO=\left(k_j\right)_{j\in \NN}$ a greedy ordering for $u_0$ so that $A=\{k_1,\dots, k_m\}$ and $B=\left\lbrace k_{m+1}, \dots, k_{m+n}\right\rbrace$. Let $A_0:=\{k_1,\dots, k_{m_1}\}$ and $B_0:=\{k_{m_1+1}, \dots, k_{2m_1}\}$.  Then $B_0\subset B$ because $m_1\ge m$ and $2m_1=n+m$. Given that $\cO_{m_1}$ is a greedy ordering for $u_0-G_{m_1}^{\cO}\left(u_0\right)$, applying Lemma~\ref{lemmasortofqg} to $u_0$ we obtain 
	\begin{align}
		&\left\Vert u_1:=\sum_{j=1}^{m_1}\left(\frac{j-1}{m_1}\right)\xx_{k_{m_1+j}}^*\left(y\right)\xx_{k_{m_1+j}}+z\right\Vert \nonumber\\
		&=\left\Vert u_0-G_{m_1}^{\cO}\left(u_0\right)-C_{m_1}^{\cO_{m_1}}\left(u_0-G_{m_1}^{\cO}\left(u_0\right)\right)\right\Vert\le \left(K+1\right)\left\Vert u_0\right\Vert\nonumber\\
		=&\left(K+1\right)\left\Vert x+y+z\right\Vert. \label{newbound1}
	\end{align}
	Let $B_1:=\left\lbrace k_{m_1+j}\right\rbrace_{\frac{m_1}{3}<j\le m_1}$. Then $\emptyset \subsetneq B_1\subset B_0\subset B$. For each $\frac{m_1}{3}< j\le m_1$, we have $\frac{j-1}{m_1}\ge \frac{1}{4}$ (because $m_1> 12$). Since $\left\Vert u_1\right\Vert_{\ell_{\infty}}\le \left\Vert y\right\Vert_{\ell_{\infty}}$, it follows that 
	\begin{align*}
		\frac{\left\Vert u_1\right\Vert_{\ell_{\infty}}}{\min_{l\in B_1}\left\vert \xx_l^*\left(u_1\right)\right\vert}\le& \frac{\left\Vert y\right\Vert_{\ell_{\infty}}}{\frac{1}{4}\min_{l\in B_1}\left\vert \xx_l^*\left(y\right)\right\vert} \le \frac{4}{s}.
	\end{align*}
	Hence, 
	\begin{align}
		&\sup_{\left\vert b_l\right\vert\le 1\;\forall l\in B_1} \left\Vert \sum_{l\in B_1}b_l\xx_l^{*}\left(y\right)\xx_l\right\Vert \le 4 \sup_{\left\vert b_l\right\vert\le 1\;\forall l\in B_0} \left\Vert \sum_{l\in B_0}\frac{1}{4} b_l\xx_l^{*}\left(y\right)\xx_l\right\Vert\\
		&\le 4 \sup_{\substack{\left\vert b_j\right\vert\le 1\;\forall 1\le j \le m_1\\ \left\vert \varepsilon\right\vert =1}} \left\Vert \sum_{j=1}^{m_1}\left(\frac{j-1}{m_1}\right)b_j\xx_{k_{m_1+j}}^*\left(y\right)\xx_{k_{m_1+j}}+\varepsilon z \right\Vert \nonumber\\
		&= 4 \sup_{\left\vert b_j\right\vert\le 1\;\forall 1\le j \le m_1 } \left\Vert \sum_{j=1}^{m_1}\left(\frac{j-1}{m_1}\right)b_j\xx_{k_{m_1+j}}^*\left(y\right)\xx_{k_{m_1+j}}+z \right\Vert\nonumber\\
		&\le 4\phi_u\left(\frac{s}{4}\right)\left\Vert u_1\right\Vert\le 4\phi_u\left(\frac{s}{4}\right)\left(K+1\right)\left\Vert x+y+z\right\Vert\nonumber\\
		&\le 48 \phi\left(\frac{1}{4}\right)\left(K+1\right)\phi\left(s\right)\left\Vert x+y+z\right\Vert,\label{secondbound}
	\end{align}
	where we used Remark~\ref{remarkphiu} for the last inequality.
	
	Now let 
	\begin{align*}
		u_2:=&x+y+z-P_{B_1}\left(y\right)+\left\vert \xx_{k_{m_1+1}}^*\left(y\right)\right\vert\Ind_{B_1}. 
	\end{align*}
	Given that for each $l\in B_1$, 
	\begin{align*}
		\left\vert \xx_{l}^*\left(y\right)\right\vert \ge& s \max_{j\in B_0}\left\vert \xx_{j}^*\left(y\right)\right\vert=s \left\vert \xx_{k_{m_1+1}}^*\left(y\right)\right\vert, 
	\end{align*}
	it follows from \eqref{secondbound} and the triangle inequality that 
	\begin{align}
		\left\Vert u_2\right\Vert\le& \left(1+48\left(1+\frac{1}{s}\right)\phi\left(\frac{1}{4}\right)\phi\left(s\right)\left(K+1\right)\right)\left\Vert x+y+z\right\Vert.\label{thirdbound2}
	\end{align}
	Let $n_1:=\left\vert B_1\right\vert$, and define $\cO':=\left(d_j\right)_{j\in \NN}$ by 
	\begin{align*}
		d_j:=&
		\begin{cases}
			k_j & \text{if } 1\le j\le m_1;\\
			k_j & \text{if } 2m_1+1\le j;\\
			k_{j+m_1-n_1} &\text{if } m_1+1\le j\le m_1+n_1;\\
			k_{j-n_1} &\text{if } m_1+n_1+1\le j\le 2m_1. 
		\end{cases}
	\end{align*}
	One can check that $\cO'$ is a greedy ordering for $u_2$ for which each element of $B_1$ comes before each element of $B_0\setminus B_1$, and the ordering outside $B_0$ is the same as it is for $\cO$.  \\
	Let $y_2:=P_{B}\left(u_2\right)$. Note that 
	$y_2=y-P_{B_1}\left(y\right)+\left\vert \xx_{k_{m_1+1}}^*\left(y\right)\right\vert\Ind_{B_1}$, so 
	\begin{align*}
		\left\Vert y_2\right\Vert_{\ell_{\infty}}= &\left\Vert y\right\Vert_{\ell_{\infty}};\\
		\min_{l\in B}\left\vert \xx_l^*\left(y_2\right)\right\vert \ge &\min_{l\in B}\left\vert \xx_l^*\left(y\right)\right\vert,
	\end{align*}
	so conditions \ref{iL3a}-\ref{iiiiL3a} still hold if we replace $y$ by $y_2$, and we still have $\left\vert \supp\left(y_2\right)\right\vert=n$. Hence, if we set 
	\begin{align*}
		B_2:=&\left\lbrace d_{m_1+j}\right\rbrace_{\frac{m_1}{3}<j\le m_1},
	\end{align*}
	the same argument used to obtain
	\eqref{secondbound}  gives
	\begin{align}
		\sup_{\left\vert b_l\right\vert\le 1\;\forall l\in B_2} \left\Vert \sum_{l\in B_2}b_l\xx_l^{*}\left(y_2\right)\xx_l\right\Vert\le& 48\phi\left(\frac{1}{4}\right)\phi\left(s\right)\left(K+1\right)\left\Vert u_2\right\Vert \nonumber. 
	\end{align}
	Combining this upper bound with \eqref{thirdbound2} we get 
	\begin{align}
		&\sup_{\left\vert b_l\right\vert\le 1\;\forall l\in B_2} \left\Vert \sum_{l\in B_2}b_l\xx_l^{*}\left(y_2\right)\xx_l\right\Vert
		\le  K_1\left\Vert x+y+z\right\Vert, \label{longbound1}
	\end{align}
	where 
	\begin{align*}
		K_1=48\phi\left(\frac{1}{4}\right)\phi\left(s\right)\left(K+1\right)\left(1+48\left(1+\frac{1}{s}\right)\phi\left(\frac{1}{4}\right)\phi\left(s\right)\left(K+1\right)\right).
	\end{align*}
	Since 
	\begin{align*}
		&m_1>n_1=|B_1|=|B_2|>\frac{m_1}{2};\\
		&B_0=\left\lbrace d_{m_1+j} \right\rbrace_{1\le j\le m_1};\\
		&B_1=\left\lbrace d_{m_1+j} \right\rbrace_{1\le j\le n_1};\\
		&B_2=\left\lbrace  d_{m_1+j} \right\rbrace_{m_1+1-n_1 \le j\le m_1},
	\end{align*}
	it follows that $B_0\setminus B_1\subset B_2$. Thus, from \eqref{secondbound},  \eqref{longbound1} by the triangle inequality we get 
	\begin{align}
		&\sup_{\left\vert b_l\right\vert\le 1\;\forall l\in B_0} \left\Vert \sum_{l\in B_0}b_l\xx_l^{*}\left(y\right)\xx_l\right\Vert\le 2K_1\left\Vert x+y+z\right\Vert.\label{inB0}
	\end{align}
	It remains to find an upper bound for the norm of $\sum_{l\in B\setminus B_0}b_l\xx_l^*\left(y\right)\xx_l$, with $\left\vert b_l\right\vert\le 1$. To this end, assume that $B\not=B_0$, and notice that $\left\vert B\setminus B_0\right\vert=n-m_1=m_1-m< m_1=\left\vert B_0\right\vert$, so there is $B_3\subset B_0$ with $\left\vert B_3\right\vert=\left\vert B\setminus B_0\right\vert$. Set 
	\begin{align*}
		y_3:=&y-P_{B_3}\left(y\right)+\left\vert \xx_{k_{m+1}}^*\left(y\right)\right\vert \Ind_{B_3}
	\end{align*}
	Since $\osc\left(y,B\right)\le s^{-1}$ and 
	\begin{align*}
		\left\vert \xx_{k_{m+1}}^*\left(y\right)\right\vert=&\max_{l\in B}\left\vert \xx_{{l}}^*\left(y\right)\right\vert,
	\end{align*}
	it follows that conditions~\ref{iL3a}-\ref{iiiiL3a} still hold when we replace $y$ with $y_3$, leaving $x$ and $z$ as before.  Now pick $\cO''=\left(k_j''\right)_{j\in \NN}$ a greedy ordering for $u_3:=x+y_3+z$ so that $A=\left\lbrace k_1'',\dots, k_m''\right\rbrace$, $B=\left\lbrace k_{m+1}'', \dots, k_{m+n}''\right\rbrace$ and $B_3=\left\lbrace k_{m+1}'', \dots, k_{m+n-m_1}''\right\rbrace$ (which is possible because $B_3$ is a greedy set for $y_3$ of cardinality $n-m_1$). Let $A_0'':=\left\lbrace k_{j}'': 1\le j\le m_1\right\rbrace$ and $B_0'':=\left\lbrace k_{m_1+j}'': 1\le j\le m_1\right\rbrace$. Given that $n+m=2m_1$ and $B_3$ precedes $B\setminus B_3$ in the ordering $\cO''$, we have $B_0''=B\setminus B_3$. Since $\xx_l^*\left(y_3\right)=\xx_l^*\left(y\right)$ for all $l\in B_0''$, the same argument used to obtain \eqref{inB0} gives
	\begin{align}
		&\sup_{\left\vert b_l\right\vert\le 1\;\forall l\in B_0''} \left\Vert \sum_{l\in B_0''}b_l\xx_l^{*}\left(y\right)\xx_l\right\Vert\le 2K_1\left\Vert x+y_3+z\right\Vert.\label{inB0''}
	\end{align}
	As
	\begin{align*}
		&\left\Vert x+y_3+z\right\Vert\le \left\Vert x+y+z\right\Vert+\left\Vert P_{B_3}\left(y\right)\right\Vert +\left\vert \xx_{k_{m+1}}^*\left(y\right)\right\vert \left\Vert \Ind_{B_3}\right\Vert\\
		&\underset{\substack{\eqref{inB0}\\ B_3\subset B_0}}{\le}\left(1+2K_1\right) \left\Vert x+y+z\right\Vert+\frac{1}{s}\left\vert\xx_{k_{m+n}}^*\left(y\right)\right\vert \left\Vert \Ind_{B_3}\right\Vert\\
		&\le \left(1+2K_1\right) \left\Vert x+y+z\right\Vert+\frac{1}{s}\sup_{\left\vert b_l\right\vert\le 1\;\forall l\in B_3} \left\Vert \sum_{l\in B_3}b_l\xx_l^{*}\left(y\right)\xx_l\right\Vert\\
		&\le \left(1+2K_1+\frac{2K_1}{s}\right)\left\Vert x+y+z\right\Vert,
	\end{align*}
	from \eqref{inB0}, \eqref{inB0''}, the triangle inequality and the fact that $B=B_0\cup B_0''$ we obtain 
	\begin{align*}
		\left\Vert \sum_{l\in B}a_l\xx_l^{*}\left(y\right)\xx_l\right\Vert\le&\sup_{\left\vert b_l\right\vert\le 1\;\forall l\in B} \left\Vert \sum_{l\in B}b_l\xx_l^{*}\left(y\right)\xx_l\right\Vert\\
		\le& \sup_{\left\vert b_l\right\vert\le 1\;\forall l\in B_0} \left\Vert \sum_{l\in B_0}b_l\xx_l^{*}\left(y\right)\xx_l\right\Vert+\sup_{\left\vert b_l\right\vert\le 1\;\forall l\in B_0''} \left\Vert \sum_{l\in B_0''}b_l\xx_l^{*}\left(y\right)\xx_l\right\Vert \\
		\le& 4K_1\left(1+K_1+\frac{K_1}{s}\right) \left\Vert x+y+z\right\Vert\le \frac{12K_1^2}{s} \left\Vert x+y+z\right\Vert.
	\end{align*}
	Combining the above inequality with the estimates for the cases $m_1\le 12$ and $n\le 25$, we have obtained 
	\begin{align*}
		\left\Vert \sum_{l\in B}a_l\xx_l^{*}\left(y\right)\xx_l\right\Vert\le& \frac{C\alpha_3}{s^3}\left(K\phi\left(\frac{1}{4}\right)\phi\left(s\right)\right)^4\left\Vert x+y+z\right\Vert,
	\end{align*}
	where $C>1$ is an absolute constant. \\
	Now suppose $n+m=2m_1-1$ with $m_1\in \NN$, $m\le m_1\le n$, and choose $j_1\not\in \supp\left(x+y+z\right)$. Let 
	\begin{align*}
		&y_4:=y+\max_{l\in B}\left\vert\xx_l^*\left(y\right)\right\vert \xx_{j_1};&&u_4:=x+y_4+z;
		&&&B_4:=B\cup\left\lbrace j_4\right\rbrace.
	\end{align*} 
	Then applying the result obtained in the previous case, we deduce that 
	\begin{align*}
		\left\Vert \sum_{l\in B}a_l\xx_l^{*}\left(y\right)\xx_l\right\Vert\le& \sup_{\left\vert b_l\right\vert\le 1\;\forall l\in B_4} \left\Vert \sum_{l\in B_4}b_l\xx_l^{*}\left(y_4\right)\xx_l\right\Vert\\
		\le& \frac{C\alpha_3}{s^3}\left(K\phi\left(\frac{1}{4}\right)\phi\left(s\right)\right)^4\left\Vert x+y_4+z\right\Vert\\
		\le& \left(\alpha_1\alpha_2+1\right)\frac{C\alpha_3}{s^3}\left(K\phi\left(\frac{1}{4}\right)\phi\left(s\right)\right)^4\left\Vert x+y+z\right\Vert\\
		\le& \frac{2C\alpha_1\alpha_2\alpha_3}{s^3}K\left(\phi\left(\frac{1}{4}\right)\phi\left(s\right)\right)^4\left\Vert x+y+z\right\Vert.
	\end{align*}
	Since $x, y, z$ are arbitrary, this completes the proof of the case $t=1$ (and then, $t\le 1$), with
	\begin{align*}
		&\Psi\left(r,s\right)\le \Psi\left(1,s\right)\le \frac{2C\alpha_1\alpha_2\alpha_3}{s^3}\left(K\phi\left(\frac{1}{4}\right)\phi\left(s\right)\right)^4&&\forall 0\le r\le 1\;\forall 0<s\le 1. 
	\end{align*}
	
	%Now fix $0<s\le 1$, $x, y, z\in \XX$, and scalars $\left(a_j\right)_{j\in\supp\left(y\right)}$ as before, but suppose $m\not=n$. Since $t=1$, we have $m<n$. We may assume $m\ge 12$ (else, \eqref{longbound6} holds in this case too). Write $m+n=2m_1+l_1$ for some $m_1\ge 5$ and $l_1\in\left\lbrace 0,1\right\rbrace$, and let $\cO$ be a greedy ordering for $x+y+z$ as before. Set 
	%\begin{align*}
	%&A_1:=\left\lbrace k_j\right\rbrace_{1\le j\le m_1};&&&B_1:=\left\lbrace k_j\right\rbrace_{m_1+1\le j\le 2m_1};\\
	%&x_1:=P_{A_1}\left(x+y+z\right);&&y_1:=&P_{B_1}\left(x+y+z\right);&&z_1:=x+y+z-x_1-y_1.
	%\end{align*}
	To prove the result for $t>1$, fix $0<s\le 1$, $x, y, z\in \XX$, and scalars $\left(a_j\right)_{j\in\supp\left(y\right)}$, and let $A:=\supp\left(x\right)$,  $B=\supp\left(y\right)$, $m:=\left\vert A\right\vert$ and $n:=\left\vert B\right\vert$. If $n\le 2$, then
	\begin{align*}
		\left\Vert \sum_{j\in \supp(y)} a_j \xx_j^*(y)\xx_j\right\Vert\le&\phi_u\left(s\right)\left\Vert y+z\right\Vert\le \left(2\alpha_3+1\right)\phi_u\left(s\right) \left\Vert x+y+z\right\Vert.
	\end{align*}
	If $m\le n$, then we apply the result for $t=1$, so 
	\begin{align*}
		\left\Vert \sum_{j\in \supp(y)} a_j \xx_j^*(y)\xx_j\right\Vert\le&\Psi\left(1,s\right)\left\Vert x+y+z\right\Vert.
	\end{align*}
	If $m>n>2$, let $\cO=\left(k_j\right)_{j\in \NN}$ be a greedy ordering for $u_0:=x+y+z$ so that $A=\left\lbrace k_j\right\rbrace_{1\le j\le m}$ and $B=\left\lbrace k_j\right\rbrace_{m+1\le j\le m+n}$, let $d_1:=\floor{\frac{m}{2}}$, and set 
	\begin{align*}
		u_1:=u_0-G_{d_1}^{\cO}\left(u_0\right)-C_{d_1}^{g,\cO_{d_1}}\left(u_0-G_{d_1}^{\cO}\left(u_0\right)\right). 
	\end{align*}
	Then 
	\begin{align*}
		&\left\Vert u_1\right\Vert_{\ell_{\infty}}\le \left\Vert u_0\right\Vert_{\ell_{\infty}}; &&\left\Vert u_1\right\Vert \underset{\text{Lemma}~\ref{lemmasortofqg}}{\le}\left(K+1\right) \left\Vert u_0\right\Vert.
	\end{align*}
	Also, since
	\begin{align*}
		&\xx_l^*\left(u_1\right)=\xx_l^*\left(u_0\right)&& \forall l\not\in A
	\end{align*}
	and
	\begin{align*}
		&m_1:=\left\vert A_1:=\left\lbrace l\in \NN: \left\vert\xx_l^*\left(u_1\right)\right\vert> \left\vert \xx_{k_{m+1}}^*\left(u_1\right)\right\vert=\left\Vert y\right\Vert_{\ell_{\infty}}\right\rbrace\right\vert\le \left\vert A\setminus \left\lbrace k_j\right\rbrace_{1\le j\le d_1}\right\vert\\
		&=m-d_1\le \frac{m+1}{2},
	\end{align*}
	we can write $u_1=x_1+y_1+z_1$, where conditions \ref{iL3a}-\ref{iiiiL3a} hold when substituting $x_1$, $y_1$ and $z_1$ and $t_1$ for $x$, $y$, $z$ and $t$ respectively, where 
	\begin{align*}
		&\supp\left(x_1\right)=A_1,&&B_1:=\supp\left(y_1\right)\supset B=\supp\left(y\right),&&&t_1=\left(\frac{m+1}{2m}\right)t.
	\end{align*}	
	Let $n_1:=\left\vert B_1\right\vert$ (note that $n_1\ge n$ and $m_1<m$, and that $\xx_l^*\left(u_1\right)=\xx_l^*\left(u_0\right)$ for all $l\in B$). If $m_1\le n_1$, we stop and apply the result for $t=1$, obtaining 
	\begin{align*}
		\left\Vert \sum_{j\in \supp\left(y\right)} a_j \xx_j^*(y)\xx_j\right\Vert\le& \sup_{\left\vert b_l\right\vert\le 1\;\forall l\in B_1} \left\Vert \sum_{l\in B_1}b_l\xx_l^{*}\left(y_1\right)\xx_l\right\Vert\le \Psi\left(1,s\right)\left\Vert u_1\right\Vert\\
		\le& \Psi\left(1,s\right)\left(K+1\right)\left\Vert x+y+z\right\Vert. 
	\end{align*}
	Otherwise, we repeat the procedure  defining $u_2$ in terms of $u_1$, and setting $x_2$,  $y_2$, $z_2$, $A_2$, $B_2$,  $m_2$ and $n_2$ as before. Then 
	\begin{align*}
		m_2\le& \frac{m_1+1}{2}\le \frac{m}{4}+\frac{1}{4}+\frac{1}{2}.
	\end{align*}
	We continue in this manner so that if we have obtained $u_l$ with $m_l\le n_l$, we stop; otherwise, we construct $m_{l+1}$. Given that $m_{l+1}<m_l$ while $n_l\ge n$, we stop after a finite number of steps. Let $l_0$ be the step at which we stop, so that $m_{l_0}\le n_{l_0}$. Given that
	\begin{align*}
		&m_{l_0}\le n_{l_0};&& \supp\left(y\right)\subset \supp\left(y_{l_0}\right);&&\xx_j^*\left(y\right)=\xx_j^*\left(y_{l_0}\right)\;\forall j\in B,
	\end{align*}
	an iterative application of Lemma~\ref{lemmasortofqg} gives
	\begin{align}
		\left\Vert \sum_{j\in \supp\left(y\right)} a_j \xx_j^*\left(y\right)\xx_j\right\Vert\le& \sup_{\left\vert b_j\right\vert\le 1\;\forall j\in B_{l_0}} \left\Vert \sum_{l\in B_{l_0}}b_j\xx_j^{*}\left(y_{l_0}\right)\xx_l\right\Vert\le \Psi\left(1,s\right)\left\Vert u_{l_0}\right\Vert\nonumber\\
		\le& \Psi\left(1,s\right)\left(K+1\right)^{l_0}\left\Vert x+y+z\right\Vert. \label{almost5}
	\end{align}
	It remains to find an upper bound for $l_0$ so that the statement holds.  If $l_0=1$ there is nothing to do; else, note that for each $1\le l\le l_0$, by construction we have 
	\begin{align*}
		&\left\vert \supp\left(x_l\right)\right\vert=\left\vert A_l\right\vert=m_l\le \frac{m}{2^l}+\sum_{j=1}^{l}\frac{1}{2^j}\le \frac{m}{2^l}+1;&&n_{l}\ge n. 
	\end{align*}
	Thus, 
	\begin{align*}
		\frac{m}{2^{l}}+1\le& n\Longrightarrow m_l\le n_l. 
	\end{align*}
	Hence, the antecedent of the above implication does not hold for any $l\le l_0-1$ (else, the procedure would have stopped before $l_0$). Therefore, 
	\begin{align*}
		\frac{m}{2^{l_0-1}}+1> n, 
	\end{align*}
	so 
	\begin{align*}
		l_0-1\le& \log_2\left(\frac{m}{n-1}\right)\le \log_2\left(\frac{m}{n}\right)+1. 
	\end{align*}
	Given that $m\le tn$, it follows that $l_0\le 2+\log_2t$.
\end{proof}
In the rest of the paper, $\Psi$ will always denote the function of Lemma~\ref{lemmasortoftqg3}.
\begin{remark}\rm \label{remarktqg}
	If $\XB$ is truncation quasi-greedy, by \cite[Theorem 1.5, Corollary 2.5]{AAB2023b} we have
	\begin{align*}
		\Psi\left(t,s\right)& \lesssim 1+\log\left(\frac{1}{s}\right)\qquad\forall 0\le t<+\infty\;\forall 0<s\le 1, 
	\end{align*}
	and we could simplify the proofs that use the function $\Psi$ by removing the variable $t$. 
\end{remark}

\section{VPQG bases and convergence through subsequences}\label{subsequences}

In \cite{O2018}, a weaker variant of quasi-greediness was introduced, where the definition is relaxed so that \eqref{QGconstant} only needs to hold for $A\in \cG\left(x,n_k,1\right)$ for a fixed sequence of natural numbers $\bn=\left(n_k\right)_{k\in\NN}$. Bases with this property are called $\bn$-quasi-greedy, and this condition is equivalent to the convergence of the greedy sums of cardinality in $\bn$ \cite[Theorem 2.1] {O2018}. While the study of convergence through subsequences has focused in the case where sequence $\bn$ is the same for each $x\in \XX$ (see also \cite{BB2020} or \cite{BB2024}, for example), a variant where there is convergence through a variable sequence $\bn\left(x\right)=\left(n_k\left(x\right)\right)_{k\in\NN}$ was also addressed (see \cite[Question 6.5]{O2018}, \cite[Example 4.1]{BB2020}). In this section, we study the boundedness and convergence of the greedy sums through variable sequences in the case of VPQG bases. To obtain our results, Lemma~\ref{lemmasortoftqg3} will play a key role.
\begin{lemma}\label{lemmagaps1}Let $\XB$ be a $K$-VPQG basis of $\XX$, $A\subset \NN$ a nonempty set, $\bn=\left(n_l\right)_{l\in A}\subset\NN$ a strictly increasing sequence, and 
	\begin{align*}
		E=&\left\lbrace n_l+j \right\rbrace_{\substack{l\in A\\0\le j\le n_l}}. 
	\end{align*}
	If $x\in \XX$ and $\cO=\left(k_j\right)_{j\in \NN}$ is a greedy ordering for $x$ such that 
	\begin{align*}
		M:=&\sup_{l\in A}\osc\left(x, \left\lbrace k_j\right\rbrace_{n_l+1\le j\le 2n_l}\right)<+\infty,
	\end{align*}
	then for each $n\in E$ we have 
	\begin{align}
		&\left\Vert G_{n}^{\cO}\left(x\right)\right\Vert\le \left(K+2\Psi\left(1,\frac{1}{M}\right)\right)\left\Vert x\right\Vert.\label{bound17}
	\end{align}
	Also, if $A$ is infinite, we have
	\begin{align}
		\lim_{\substack{n \to +\infty\\ n\in E}}G_{n}^{\cO}\left(x\right)=x. \label{conv17}
	\end{align}
\end{lemma}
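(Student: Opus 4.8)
The plan is to establish the uniform bound \eqref{bound17} first, and then to deduce the convergence \eqref{conv17} from it together with Proposition~\ref{propositioncaracVPQG}.

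\medskip
\emph{The bound \eqref{bound17}.} Fix $n\in E$, choose $l\in A$ and $0\le j\le n_l$ with $n=n_l+j$, and put $m:=n_l$. I would first note that $M<+\infty$ forces $|\supp(x)|\ge 2m$: if $|\supp(x)|\le m$ then $G_n^{\cO}(x)=x$ (using that $\XB$ is a Markushevich basis, Proposition~\ref{propositioncaracVPQG}) and \eqref{bound17} is trivial, while $m<|\supp(x)|<2m$ would make $\osc\bigl(x,\{k_{m+1},\dots,k_{2m}\}\bigr)=+\infty$ (its denominator vanishes, numerator does not), against $M<+\infty$; and if $|\supp(x)|=n$ then again $G_n^{\cO}(x)=x$, so we may assume $|\supp(x)|>n$. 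Then $k_1,\dots,k_{2m}\in\supp(x)$ and, $\cO$ being greedy, $(|\xx_{k_i}^*(x)|)_i$ is non-increasing. By Lemma~\ref{lemmasortofqg},
\begin{align*}
 2C_{2m}^{g,\cO}(x)-C_m^{g,\cO}(x)=G_m^{\cO}(x)+C_m^{g,\cO_m}\!\bigl(x-G_m^{\cO}(x)\bigr),
\end{align*}
a vector of norm $\le K\|x\|$ (again Lemma~\ref{lemmasortofqg}). Writing $G_n^{\cO}(x)=\bigl(2C_{2m}^{g,\cO}(x)-C_m^{g,\cO}(x)\bigr)+R_n$ and computing,
\begin{align*}
 R_n=\sum_{i=1}^{j}\frac{i-1}{m}\,\xx_{k_{m+i}}^*(x)\,\xx_{k_{m+i}}\;-\;\sum_{i=j+1}^{m}\frac{m+1-i}{m}\,\xx_{k_{m+i}}^*(x)\,\xx_{k_{m+i}},
\end{align*}
so $R_n$ is supported in $\{k_{m+1},\dots,k_{2m}\}$ with all coefficients of modulus $\le1$. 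Now I would apply Lemma~\ref{lemmasortoftqg3} with $t=1$, $s=1/M$ and the splitting $x=x_{\Psi}+y+z$, $x_{\Psi}:=G_m^{\cO}(x)$, $y:=P_{\{k_{m+1},\dots,k_{2m}\}}(x)$, $z:=x-x_{\Psi}-y$: the three supports are pairwise disjoint, $x_{\Psi}\rhd y\rhd z$ by the monotonicity just noted, $|\supp(x_{\Psi})|=|\supp(y)|=m$, and $\osc(y,\supp(y))=\osc\bigl(x,\{k_{m+1},\dots,k_{2m}\}\bigr)\le M$. Hence $\|R_n\|\le\Psi(1,1/M)\|x\|$ and $\|G_n^{\cO}(x)\|\le\bigl(K+\Psi(1,1/M)\bigr)\|x\|\le\bigl(K+2\Psi(1,1/M)\bigr)\|x\|$.

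\medskip
\emph{The convergence \eqref{conv17}.} Suppose $A$ is infinite; after reindexing take $A=\NN$, so $n_l\to+\infty$ and $E$ is unbounded. First a reduction: for $n\in[n_l,2n_l]$ one has $G_n^{\cO}(x)-G_{n_l}^{\cO}(x)=P_{\{k_{n_l+1},\dots,k_n\}}\bigl(x-G_{n_l}^{\cO}(x)\bigr)$, a projection onto coordinates where $x-G_{n_l}^{\cO}(x)$ has modulus at least $\tfrac1M\|x-G_{n_l}^{\cO}(x)\|_{\ell_\infty}$ (by the oscillation hypothesis), so Lemma~\ref{lemma: VPQG->NU} gives $\|G_n^{\cO}(x)-G_{n_l}^{\cO}(x)\|\le\phi(1/M)\|x-G_{n_l}^{\cO}(x)\|$ uniformly for $n\in[n_l,2n_l]$; hence it suffices to prove $\|x-G_{n_l}^{\cO}(x)\|\to0$. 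For this, the plan is to combine the convergence of the de la Vallée--Poussin sums with the control of Lemma~\ref{lemmasortoftqg3}: by Proposition~\ref{propositioncaracVPQG} and Lemma~\ref{lemmasortofqg}, $\bigl\|x-G_{n_l}^{\cO}(x)-C_{n_l}^{g,\cO_{n_l}}(x-G_{n_l}^{\cO}(x))\bigr\|\to0$, while the ``window'' vector $C_{n_l}^{g,\cO_{n_l}}(x-G_{n_l}^{\cO}(x))$ is supported in $\{k_{n_l+1},\dots,k_{2n_l}\}$ (so each of its coordinates eventually vanishes) and, living on a set controlled by the oscillation hypothesis, is uniformly bounded in norm by Lemma~\ref{lemmasortoftqg3} (framed, via the greedy ordering, against tails $x-G_{2n_{l'}}^{\cO}(x)$ with $2n_{l'}\le n_l$). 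Since $\XB$ is a Markushevich basis, any norm--convergent subsequence of $(x-G_{n_l}^{\cO}(x))_l$ must have limit $0$; combined with these uniform estimates this forces $\|x-G_{n_l}^{\cO}(x)\|\to0$. The reduction then gives $\|x-G_n^{\cO}(x)\|\le(1+\phi(1/M))\|x-G_{n_l}^{\cO}(x)\|\to0$ for $n\in[n_l,2n_l]$, which is \eqref{conv17}.

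\medskip
The estimate \eqref{bound17} is routine once the splitting $G_n^{\cO}(x)=(2C_{2m}^{g,\cO}(x)-C_m^{g,\cO}(x))+R_n$ is in place. The main obstacle is the last step of \eqref{conv17}: promoting the uniform boundedness of $(x-G_{n_l}^{\cO}(x))_l$ to norm convergence to $0$. The crude estimate only yields $\|R_n\|\le\Psi(1,1/M)\|x\|$, i.e.\ boundedness; extracting decay requires using the oscillation hypothesis and Lemma~\ref{lemmasortoftqg3} quantitatively to pin down the smoothed tails $C_{n_l}^{g,\cO_{n_l}}(x-G_{n_l}^{\cO}(x))$ — and, through the reduction, the increments $G_n^{\cO}(x)-G_{n_l}^{\cO}(x)$ — uniformly in the relevant indices, and that is where the work concentrates.
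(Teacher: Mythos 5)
Your proof of the uniform bound \eqref{bound17} is correct and takes a mildly different route from the paper: rather than first bounding $\left\Vert G_{n_l}^{\cO}\left(x\right)\right\Vert$ and then adding the increment for $n_l<n\le 2n_l$, you subtract the de la Vall\'ee--Poussin sum $2C_{2m}^{g,\cO}(x)-C_m^{g,\cO}(x)$ from $G_n^{\cO}(x)$ directly and observe that the remainder $R_n$ is a sum over $\{k_{m+1},\dots,k_{2m}\}$ with coefficients of modulus at most $1$, to which Lemma~\ref{lemmasortoftqg3} applies with $t=1$, $s=1/M$. This is clean and in fact yields the slightly sharper constant $K+\Psi(1,1/M)$; the verification of the hypotheses of Lemma~\ref{lemmasortoftqg3} (disjoint supports, $\rhd$, cardinality, oscillation) is correct, and the initial reduction to $|\supp(x)|\ge 2m$ is a sensible way to handle degenerate cases.

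The convergence \eqref{conv17}, however, has a real gap. Your reduction to showing $\left\Vert x-G_{n_l}^{\cO}(x)\right\Vert\to 0$ via near unconditionality is fine, and you correctly identify that
$\left\Vert u_{n_l}\right\Vert:=\left\Vert x-G_{n_l}^{\cO}(x)-C_{n_l}^{g,\cO_{n_l}}\bigl(x-G_{n_l}^{\cO}(x)\bigr)\right\Vert\to 0$
by Proposition~\ref{propositioncaracVPQG} and Lemma~\ref{lemmasortofqg}, so that everything hinges on showing $\left\Vert C_{n_l}^{g,\cO_{n_l}}\bigl(x-G_{n_l}^{\cO}(x)\bigr)\right\Vert\to 0$. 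But you only establish that this window vector is \emph{bounded}, and the proposed way to promote boundedness to decay does not work: ``any norm-convergent subsequence must have limit $0$'' is true, but in an infinite-dimensional Banach space a bounded sequence need have no norm-convergent subsequence, so no conclusion follows. Moreover, ``framing'' Lemma~\ref{lemmasortoftqg3} against the tails $x-G_{2n_{l'}}^{\cO}(x)$ is circular, since the smallness of those tails is exactly what you are trying to prove. The paper's resolution is to fix $\varepsilon>0$, choose $m_0$ large enough that $\left\Vert u_{m_0}\right\Vert$ is small, and then for all sufficiently large $l$ (so that the window $\{k_{n_l+1},\dots,k_{2n_l}\}$ lies past the coordinates $\{k_1,\dots,k_{2m_0}\}$ modified in forming $u_{m_0}$) apply Lemma~\ref{lemmasortoftqg3} \emph{to $u_{m_0}$ rather than to $x$}: since $\xx_{k_j}^*(u_{m_0})=\xx_{k_j}^*(x)$ on the window, one decomposes $u_{m_0}$ as a sum of a ``top'' piece, the window piece, and a tail (with the same oscillation bound and a favorable cardinality comparison), obtaining $\left\Vert C_{n_l}^{g,\cO_{n_l}}\bigl(x-G_{n_l}^{\cO}(x)\bigr)\right\Vert\le\Psi(1,1/M)\left\Vert u_{m_0}\right\Vert<\varepsilon$. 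Replacing the appeal to $\left\Vert x\right\Vert$ in your $\Psi$-estimate by $\left\Vert u_{m_0}\right\Vert$ is the missing quantitative step.
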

\begin{proof}
	Fix $l\in \NN$. By Lemma~\ref{lemmasortoftqg3} and hypothesis, 
	\begin{align}
		\sup_{\left\vert b_j\right\vert\le 1\;\forall 1\le j\le n_l} \left\Vert \sum_{j=1}^{n_l}b_j\xx_{k_{n_l+j}}^{*}\left(x \right)\xx_{k_{n_l+j}}\right\Vert\le& \Psi\left(1,\frac{1}{M}\right)\left\Vert x\right\Vert. \label{uncondbound6}
	\end{align}
	Thus, by Lemma~\ref{lemmasortofqg}, 
	\begin{align*}
		\left\Vert G_{n_l}^{\cO}\left(x\right)\right\Vert\le& \left\Vert G_{n_l}^{\cO}\left(x\right)+C_{n_l}^{g,\cO_{n_l}}\left(x- G_{n_l}^{\cO}\left(x\right)\right) \right\Vert+\left\Vert C_{n_l}^{g,\cO_{n_l}}\left(x- G_{n_l}^{\cO}\left(x\right)\right)\right\Vert\\
		\le& \left(K+\Psi\left(1,\frac{1}{M}\right)\right)\left\Vert x\right\Vert. 
	\end{align*}
	Now fix $n_l< n\le 2n_l$. We have
	\begin{align*}
		\left\Vert G_{n}^{\cO}\left(x\right)\right\Vert\le& \left\Vert G_{n_l}^{\cO}\left(x\right)\right\Vert +\left\Vert\sum_{1\le j\le n-n_l}\xx_{k_{n_l+j}}^*\left(x\right)\xx_{k_{n_l+j}}\right\Vert\\ \le&\left(K+2\Psi\left(1,\frac{1}{M}\right)\right)\left\Vert x\right\Vert.
	\end{align*}
	This completes the proof of \eqref{bound17}. To prove \eqref{conv17}, we may assume $A=\NN$. If $x$ has finite support, then $x\in \langle \XB\rangle$ because $\XB$ is a Markushevich basis, so there is nothing to prove. If $x$ has infinite support, fix $\epsilon>0$, and choose $m_0\in \NN$ so that, for each $m\ge m_0$, 
	\begin{align*}
		\left\Vert u_m:=x-G_{m}^{\cO}\left(x\right)-C_{m}^{g,\cO_{m}}\left(x- G_{m}^{\cO}\left(x\right)\right) \right\Vert\le &\frac{\epsilon}{ 4\left(1+\Psi\left(1,\frac{1}{M}\right)\right)}
	\end{align*}
	(which is possible by Proposition~\ref{propositioncaracVPQG} and Lemma~\ref{lemmasortofqg}). Choose $l_0\in \NN_{>m_0}$ so that $k_{n_{l}}>2 m_0$ for all $l\ge l_0$. Given $l\ge l_0$, set
	\begin{align*}
		A_l:=&\left\lbrace k_{j}\right\rbrace_{1\le j\le n_l};\\
		B_l:=&\left\lbrace k_{n_l+j}\right\rbrace_{1\le j\le n_l};\\
		A_l':=&\left\lbrace i\in \NN: \left\vert \xx_i^*\left(u_{m_0}\right)\right\vert>\left\vert \xx_{k_{n_l+1}}^*\left(x\right)\right\vert\right\rbrace;\\
		B_l':=&\left\lbrace i\in \NN: \left\vert \xx_{k_{2n_l}}^*\left(x\right)\right\vert\le  \left\vert \xx_i^*\left(u_{m_0}\right)\right\vert\le \left\vert \xx_{k_{n_l+1}}^*\left(x\right)\right\vert\right\rbrace.
	\end{align*}
	Then $A_l'\subset A_l$ because $\cO$ is a greedy ordering for $x$ and 
	\begin{align*}
		&\left\vert \xx_i^*\left(u_{m_0}\right)\right\vert\le  \left\vert \xx_i^*\left(x\right)\right\vert&&\forall i\in \NN, 
	\end{align*}
	whereas $B_l\subset B_l'$ because
	\begin{align}
		&\xx_i^*\left(u_{m_0}\right)=\xx_i^*\left(x\right)&&\forall i>2m_0. \label{equal17}
	\end{align}
	Hence, $\left\vert A_l'\right\vert \le \left\vert B_l'\right\vert$. From \eqref{equal17},  Lemma~\ref{lemmasortoftqg3} and hypothesis we deduce that
	\begin{align}
		\sup_{\left\vert b_j\right\vert\le 1\;\forall 1\le j\le n_l} \left\Vert \sum_{j=1}^{n_l}b_j\xx_{k_{n_l+j}}^{*}\left(x \right)\xx_{k_{n_l+j}}\right\Vert\le& \sup_{\left\vert b_i\right\vert\le 1\;\forall i\in B_l'} \left\Vert \sum_{i\in B_l' }b_i\xx_{i}^{*}\left(u_{m_0} \right)\xx_{i}\right\Vert\nonumber\\
		\le& \Psi\left(1,\frac{1}{M}\right)\left\Vert u_{m_0}\right\Vert.\label{bound17b}
	\end{align}
	Thus, 
	\begin{align}
		\left\Vert x-G_{n_l}^{\cO}\left(x\right)\right\Vert\le& \left\Vert u_{n_l}\right\Vert+\left\Vert C_{n_l}^{g,\cO_{n_l}}\left(x-G_{n_l}^{\cO}\left(x\right)\right)\right\Vert\nonumber\\
		\le&  \left\Vert u_{n_l}\right\Vert+\sup_{\left\vert b_j\right\vert\le 1\;\forall 1\le j\le n_l} \left\Vert \sum_{j=1}^{n_l}b_j\xx_{k_{n_l+j}}^{*}\left(x \right)\xx_{k_{n_l+j}}\right\Vert\nonumber\\
		\le& \left\Vert u_{n_l}\right\Vert+\Psi\left(1,\frac{1}{M}\right)\left\Vert u_{m_0}\right\Vert<\epsilon.\label{bound17c}
	\end{align}
	Now if $n_l< n\le 2n_l$, then 
	\begin{align*}
		\left\Vert x-G_{n}^{\cO}\left(x\right)\right\Vert\le& \left\Vert x-G_{n_l}^{\cO}\left(x\right)\right\Vert+\left\Vert \sum_{j=1}^{n-n_l}\xx_{k_{n_l+j}}^*\left(x\right)\xx_{k_{n_l+j}}\right\Vert\\
		\underset{\eqref{bound17b},\eqref{bound17c}}{\le}& \left\Vert u_{n_l}\right\Vert+2\Psi\left(1,\frac{1}{M}\right)\left\Vert u_{m_0}\right\Vert<\epsilon.
	\end{align*}
\end{proof}

For VPQG bases, we can also give an estimate for how often we can find uniformly bounded greedy sums for a given $x$. More precisely, we have the following result. 
\begin{lemma}\label{lemmaeachlog}Let $\XB$ be a $K$-VPQG basis of $\XX$. Given $x\in \XX$, $\cO$ a greedy ordering for $x$ and $n\in \NN$, there is an integer $0\le i\le \log_2n+3$ such that 
	\begin{align}
		\left\Vert G_{2^in}^{\cO}\left(x\right)\right\Vert\le& \left(\alpha_1\alpha_2+K+2\Psi\left(1,\frac{1}{4}\right)\right)\left\Vert x\right\Vert. \nonumber
	\end{align}
\end{lemma}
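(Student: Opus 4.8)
The plan is to use the freedom in the choice of $i$ through a dichotomy on the oscillation of $x$ over the dyadic blocks of its greedy expansion. Fix a greedy ordering $\cO=\left(k_j\right)_{j\in\NN}$ for $x$, write $a_j:=\left\vert\xx_{k_j}^*\left(x\right)\right\vert$ (a non-increasing sequence) and $L:=\left\vert\supp\left(x\right)\right\vert$; we may assume $x\neq 0$. Put $I:=\floor{\log_2 n}+3$, so that $2^{I}>4n$ (the largest power of two not exceeding $n$ is $>n/2$), and for $0\le i\le I-1$ set $W_i:=\left\lbrace k_j:2^{i}n<j\le 2^{i+1}n\right\rbrace$, a block of $2^in$ consecutive indices of the greedy ordering with $\osc\left(x,W_i\right)=a_{2^in+1}/a_{2^{i+1}n}$. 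A preliminary reduction disposes of the case $L\le 2^{I}n$: choosing the least $i\le I$ with $2^{i}n\ge L$ one has $G_{2^{i}n}^{\cO}\left(x\right)=x$, and the desired estimate holds because the constant on the right is $\ge 1$. So from now on assume $L>2^{I}n$; then every $W_i$ with $0\le i\le I-1$ lies in $\supp\left(x\right)$ and the oscillations above are finite.

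\emph{Case A: $\osc\left(x,W_{i_0}\right)\le 4$ for some $0\le i_0\le I-1$.} I would apply Lemma~\ref{lemmagaps1} to the one-term sequence $\bn=\left(2^{i_0}n\right)$ (for which $M\le 4$ and $2^{i_0}n$ belongs to the associated set $E$), obtaining
\[
\left\Vert G_{2^{i_0}n}^{\cO}\left(x\right)\right\Vert\le\left(K+2\Psi\left(1,\tfrac1M\right)\right)\left\Vert x\right\Vert\le\left(K+2\Psi\left(1,\tfrac14\right)\right)\left\Vert x\right\Vert,
\]
the second inequality because $\Psi$ is non-decreasing for $\preceq$. (One can also inline this: Lemma~\ref{lemmasortoftqg3}, applied with $x$-part $G_{2^{i_0}n}^{\cO}\left(x\right)$, $y$-part $P_{W_{i_0}}\left(x\right)$ and $z$-part the remainder, bounds $\left\Vert C_{2^{i_0}n}^{g,\cO_{2^{i_0}n}}\left(x-G_{2^{i_0}n}^{\cO}\left(x\right)\right)\right\Vert$ by $\Psi\left(1,\tfrac14\right)\left\Vert x\right\Vert$, and Lemma~\ref{lemmasortofqg} passes from there to $G_{2^{i_0}n}^{\cO}\left(x\right)$.) Taking $i=i_0$ settles this case.

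\emph{Case B: $\osc\left(x,W_i\right)>4$ for every $0\le i\le I-1$.} Then $a_{2^{i+1}n}<a_{2^in+1}/4\le a_{2^in}/4$ for each such $i$, and telescoping yields $a_{2^{I}n}<a_n/4^{I}$. Now I would invoke Lemma~\ref{lemmasortofqg} with $m=2^{I}n$, which gives $\left\Vert x-G_{2^{I}n}^{\cO}\left(x\right)-C_{2^{I}n}^{g,\cO_{2^{I}n}}\left(x-G_{2^{I}n}^{\cO}\left(x\right)\right)\right\Vert\le\left(K+1\right)\left\Vert x\right\Vert$, while the Cesàro correction term $C_{2^{I}n}^{g,\cO_{2^{I}n}}\left(x-G_{2^{I}n}^{\cO}\left(x\right)\right)=\sum_{j=1}^{2^{I}n}\frac{2^{I}n+1-j}{2^{I}n}\xx_{k_{2^{I}n+j}}^*\left(x\right)\xx_{k_{2^{I}n+j}}$ has norm at most
\[
2^{I}n\,a_{2^{I}n+1}\,\alpha_1\le 2^{I}n\,a_{2^{I}n}\,\alpha_1<2^{I}n\cdot\frac{a_n}{4^{I}}\cdot\alpha_1=\frac{n\,a_n\,\alpha_1}{2^{I}}<\frac{a_n\,\alpha_1}{4}\le\frac{\alpha_1\alpha_2}{4}\left\Vert x\right\Vert,
\]
using $2^{I}>4n$ and $a_n\le\alpha_2\left\Vert x\right\Vert$. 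Hence $\left\Vert x-G_{2^{I}n}^{\cO}\left(x\right)\right\Vert\le\left(K+1+\tfrac{\alpha_1\alpha_2}{4}\right)\left\Vert x\right\Vert$, so $\left\Vert G_{2^{I}n}^{\cO}\left(x\right)\right\Vert\le\left(K+2+\tfrac{\alpha_1\alpha_2}{4}\right)\left\Vert x\right\Vert\le\left(\alpha_1\alpha_2+K+2\Psi\left(1,\tfrac14\right)\right)\left\Vert x\right\Vert$, the last step because $2\Psi\left(1,\tfrac14\right)\ge 2$. Taking $i=I$ settles this case.

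The single idea here is the dichotomy: when a dyadic block of the greedy expansion has oscillation at most $4$, the constant-coefficient–type control afforded by $\Psi$, combined with the Vallée–Poussin identity of Lemma~\ref{lemmasortofqg}, bounds the corresponding greedy sum; and when no block does, the coefficients decay geometrically across the whole range $\left[n,2^{I}n\right]$, so after the Vallée–Poussin correction the tail $x-G_{2^{I}n}^{\cO}\left(x\right)$ is no larger than a single basis vector. The point that needs care is keeping the range of blocks inside $\supp\left(x\right)$, which is exactly why the case $L\le 2^{I}n$ (handled by the trivial observation $G_{2^{i}n}^{\cO}\left(x\right)=x$ for a suitable $i\le I$) is peeled off first; everything else is routine bookkeeping with $2^{I}>4n$ and the monotonicity of $\Psi$.
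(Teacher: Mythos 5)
Your proof is correct, reaches the same constant and the same range of $i$, but by a genuinely different route. The paper argues by contradiction: assuming the bound fails for every admissible $i$, it introduces $D=\{m:\osc(x,\{k_{m+1},\dots,k_{2m}\})\le 4\}$ and $J=\{i\in\NN_0:2^ln\notin D\ \forall\,0\le l\le i\}$, shows that the contradiction hypothesis forces $\beta:=\sup J>i_1:=\lceil\log_2 n\rceil+1$, and then splits on whether $\beta$ is finite or infinite. When $\beta<\infty$ it invokes Lemma~\ref{lemmagaps1} at $2^{\beta+1}n$ (possibly far beyond the admissible range) and telescopes the \emph{difference} $G_{2^{\beta+1}n}^{\cO}(x)-G_{2^{i_1}n}^{\cO}(x)$ back to the admissible index $i_1$ using the geometric decay; when $\beta=\infty$ it telescopes the whole tail of $x-G_{2^{i_1}n}^{\cO}(x)$. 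You instead argue directly and only ever inspect the first $I=\floor{\log_2 n}+3$ dyadic blocks. If one has oscillation $\le 4$ you apply Lemma~\ref{lemmagaps1} there, exactly as the paper does in spirit; if none does, you exploit that the Cesàro correction $C_{2^In}^{g,\cO_{2^In}}(x-G_{2^In}^{\cO}(x))$ is a sum of only $2^In$ terms each of size at most $a_{2^In}<a_n/4^I$, so since $2^I>4n$ the whole correction has norm $<\tfrac{\alpha_1\alpha_2}{4}\|x\|$, and Lemma~\ref{lemmasortofqg} then bounds $G_{2^In}^{\cO}(x)$. This buys you a cleaner structure: no contradiction hypothesis, no $\beta<\infty$/$\beta=\infty$ case split, and no telescoping across a potentially unbounded range of blocks; you also make explicit (via the preliminary $|\supp(x)|\le 2^In$ reduction) something the paper absorbs silently into its contradiction hypothesis. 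All the arithmetic — the oscillation identity $\osc(x,W_i)=a_{2^in+1}/a_{2^{i+1}n}$, the telescoped bound $a_{2^In}<a_n/4^I$, the Cesàro estimate $2^In\,a_{2^In+1}\alpha_1<\tfrac14\alpha_1\alpha_2\|x\|$, the monotonicity of $\Psi$ for $\preceq$, and the final comparison of constants using $\Psi\ge 1$ — checks out.
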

\begin{proof}
	Suppose the result does not hold for some $x\in \XX$ and $n\in \NN$, and let
	\begin{align*}
		D:=&\left\lbrace m\in \NN: \osc\left(x, \left\lbrace k_{m+1}, \dots, k_{2m}\right\rbrace\right)\le 4\right\rbrace;\\
		J:=&\left\lbrace i \in\NN_0:  2^ln\not \in D\quad\forall 0\le l\le i\right\rbrace;\\
		\beta:=&\sup J;\\
		i_1:=&\ceil{\log_2n}+1.
	\end{align*}
	For each $m\in D$ and each $m\le l\le 2m$, by Lemma~\ref{lemmagaps1} we have
	\begin{align*}
		\left\Vert G_{l}^{\cO}\left(x\right)\right\Vert\le& \left(K+2\Psi\left(1,\frac{1}{4}\right)\right)\left\Vert x\right\Vert. 
	\end{align*} 
	Hence, our assumption entails that $\left\lbrace 0,1,\dots, i_1+1\right\rbrace\subset J$, so $i_1<\beta\le +\infty$. Note that $J$ is either $\NN_0$ or a finite interval of $\NN_0$, and that for each $i\in J$, 
	\begin{align}
		&\left\vert \xx_{k_{2^in}}^*\left(x\right)\right\vert \le \frac{1}{4^i}\left\vert \xx_{k_{n}}^*\left(x\right)\right\vert.\label{telescopic1}
	\end{align}
	We consider two cases: 
	\begin{enumerate}
		\item[1)] \label{betafin}If $\beta\in \NN$, then $J=\left\lbrace 0,1,\dots, \beta\right\rbrace$, and by \eqref{telescopic1} we have
		\begin{align*}
			\left\Vert G_{2^{\beta+1}n}^{\cO}\left(x\right)- G_{2^{i_1}n}^{\cO}\left(x\right) \right\Vert\le& \sum_{i=i_1}^{\beta-1}\sum_{j=2^{i}n+1}^{2^{i+1}n}\left\Vert \xx_{k_j}^*\left(x\right)\xx_{k_j}\right\Vert+\sum_{j=2^{\beta}n+1}^{2^{\beta+1}n}\left\Vert \xx_{k_j}^*\left(x\right)\xx_{k_j}\right\Vert\\
			\le& \sum_{i=i_1}^{\beta-1}\sum_{j=2^{i}n+1}^{2^{i+1}n}\frac{1}{4^{i}}\left\Vert \xx_{k_n}^*\left(x\right)\xx_{k_j}\right\Vert+\sum_{j=2^{\beta}n+1}^{2^{\beta+1}n}\left\Vert \xx_{k_{2^{\beta}n}}^*\left(x\right)\xx_{k_j}\right\Vert \\
			\le& \sum_{i=i_1}^{\beta-1}\sum_{j=2^{i}n+1}^{2^{i+1}n}\frac{\alpha_1\alpha_2\left\Vert x\right\Vert}{4^{i}}+\sum_{j=2^{\beta}n+1}^{2^{\beta+1}n}\frac{1}{4^{\beta}}\left\Vert \xx_{k_{n}}^*\left(x\right)\xx_{k_j}\right\Vert \\
			\le& n\alpha_1\alpha_2\left\Vert x\right\Vert\left(\left(\sum_{i=i_1}^{\beta-1}\sum_{j=2^{i}n+1}^{2^{i+1}n}\frac{1}{2^{i}}\right)+ \frac{2^{\beta}n}{4^{\beta}}\right)\underset{\beta>i_1\ge 1+\log_2 n}{\le}\alpha_1\alpha_2\left\Vert x\right\Vert.
		\end{align*}
		Given that $2^{\beta+1}n\in D$, by Lemma~\ref{lemmagaps1} we have
		\begin{align*}
			\left\Vert G_{2^{\beta+1}n}^{\cO}\left(x\right) \right\Vert\le&\left(K+2\Psi\left(1,\frac{1}{4}\right)\right)\left\Vert x\right\Vert.
		\end{align*}
		Hence, 
		\begin{align*}
			\left\Vert G_{2^{i_1}n}^{\cO}\left(x\right)\right\Vert\le& \left(\alpha_1\alpha_2+K+2\Psi\left(1,\frac{1}{4}\right)\right)\left\Vert x\right\Vert, 
		\end{align*}
		which contradicts the assumption. 
		\item[2)] \label{betainf}If $\beta=+\infty$ then $J=\NN_0$, and as before it follows  from \eqref{telescopic1} that 
		\begin{align*}
			\left\Vert x-G_{2^{i_1}n}^{\cO}\left(x\right)\right\Vert\le& \sum_{i=i_1}^{\infty}\sum_{j=2^{i}n+1}^{2^{i+1}n}\left\Vert \xx_{k_j}^*\left(x\right)\xx_{k_j}\right\Vert\le \sum_{i=i_1}^{\infty}\frac{2^in}{4^i}\alpha_1\alpha_2\left\Vert x\right\Vert\\
			=& n\alpha_1\alpha_2\left\Vert x\right\Vert\sum_{i=i_1}^{\infty}\frac{1}{2^i}\le \alpha_1\alpha_2\left\Vert x\right\Vert,
		\end{align*}
		so 
		\begin{align*}
			\left\Vert G_{2^{i_1}n}^{\cO}\left(x\right)\right\Vert\le& \left(1+\alpha_1\alpha_2\right)\left\Vert x\right\Vert,
		\end{align*}
		a contradiction. 
	\end{enumerate}
\end{proof}
\color{black}

For some of our results below, we will use the following definitions (see \cite{O2018} and  \cite{BB2020}). 
\begin{defi}\label{definitiondifferentgaps}
	Let $\bn=\left(n_i\right)_{i\in \NN}$ be a strictly increasing sequence of natural numbers. The \emph{gaps} of the sequence are the quotients $\frac{n_{i+1}}{n_i}$, and we say that $\bn$ has arbitrarily large gaps if
	$$\limsup_{i\rightarrow +\infty}\frac{n_{i+1}}{n_i}=+\infty.$$
	\noindent On the other hand, for $l\in\mathbb N_{\ge 2}$, we say that $\bn$ has $l$-bounded quotient gaps if 
	$$\frac{n_{i+1}}{n_i}\leq l,$$
	for all $i\in\mathbb N$, and we say that it has bounded gaps if it has $l$-bounded gaps for some $l\in\mathbb N_{\ge 2}$.\\
	From now on, $\bn$ and $\bd$ will always denote strictly increasing sequences of positive integers $\left(n_i\right)_{i\in \NN}$ and $\left(d_i\right)_{i\in \NN}$ respectively.
\end{defi}
For our next result, we need the following elementary lemma. 
\begin{lemma}\label{lemmaell1}Let $\XB$ be a basis of $\XX$, $x\in \XX$, $\cO=\left(k_j\right)_{j\in \NN}$ a greedy ordering for $\XX$, $\epsilon>0$, $l\in \NN_{\ge 2}$ and $\bn$ a sequence with $l$-bounded gaps.  Then  $\left\Vert x\right\Vert_{\ell_1}<+\infty$ if at least one of the following conditions hold: 
	\begin{enumerate}[\rm i)]
		\item \label{ni+1}There is $i_0\in \NN$ such that  
		\begin{align*}
			&\osc\left(x,\left\lbrace k_{{n_i}+1},\dots, k_{n_{i+1}}\right\rbrace\right)\ge  l+\epsilon &&\forall i\ge i_0.
		\end{align*}
		\item \label{2ni}There is $i_0\in \NN$ such that 
		\begin{align*}
			&\osc\left(x,\left\lbrace k_{{n_i}+1},\dots, k_{2n_{i}}\right\rbrace\right)\ge  2l+\epsilon &&\forall i\ge i_0.
		\end{align*}
	\end{enumerate}
\end{lemma}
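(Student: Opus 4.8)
The plan is to reduce the statement to an elementary estimate on the non-increasing sequence $a_j:=\abs{\xx_{k_j}^*(x)}$, $j\in\NN$; the de la Vallée-Poussin quasi-greediness plays no role here, only the greedy ordering together with the hypotheses on the gaps and the oscillation. If $\supp(x)$ is finite there is nothing to prove, so I may assume it is infinite, whence $a_j>0$ for every $j$. Since $\cO$ is a greedy ordering, on any block of consecutive positions $\{k_{p+1},\dots,k_q\}$ with $p<q$ the maximum of $\abs{\xx_j^*(x)}$ equals $a_{p+1}$ and the minimum equals $a_q$; thus hypothesis~\ref{ni+1} reads $a_{n_i+1}\ge(l+\epsilon)\,a_{n_{i+1}}$ for all $i\ge i_0$, and hypothesis~\ref{2ni} reads $a_{n_i+1}\ge(2l+\epsilon)\,a_{2n_i}$ for all $i\ge i_0$.

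Under~\ref{ni+1}, monotonicity of $a$ gives $a_{n_{i+1}+1}\le a_{n_{i+1}}\le a_{n_i+1}/(l+\epsilon)$ for $i\ge i_0$, so $a_{n_{i_0+k}+1}\le a_{n_{i_0}+1}(l+\epsilon)^{-k}$ for all $k\ge0$. Writing $\sum_{j>n_{i_0}}a_j$ as a sum over the blocks $(n_{i_0+k},n_{i_0+k+1}]$, each block has at most $n_{i_0+k+1}-n_{i_0+k}\le(l-1)n_{i_0+k}\le(l-1)l^{k}n_{i_0}$ terms, each $\le a_{n_{i_0+k}+1}$, so it contributes at most $(l-1)\,n_{i_0}\,a_{n_{i_0}+1}\bigl(l/(l+\epsilon)\bigr)^{k}$; since $l/(l+\epsilon)<1$, summing over $k$ and adding the finitely many terms of index $\le n_{i_0}$ shows $\norm{x}_{\ell_1}<\infty$. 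Under~\ref{2ni}, the one extra step is to select indices along which the positions essentially double: put $j_0:=i_0$ and, recursively, $j_{k+1}:=\min\{i:n_i\ge2n_{j_k}\}$ (which exists because $n_i\to\infty$). Then $j_{k+1}>j_k$, and minimality together with the $l$-bounded gaps gives $n_{j_{k+1}-1}<2n_{j_k}$, hence $2n_{j_k}\le n_{j_{k+1}}\le l\,n_{j_{k+1}-1}<2l\,n_{j_k}$. Consequently $a_{n_{j_{k+1}}+1}\le a_{n_{j_{k+1}}}\le a_{2n_{j_k}}\le a_{n_{j_k}+1}/(2l+\epsilon)$, so $a_{n_{j_k}+1}\le a_{n_{j_0}+1}(2l+\epsilon)^{-k}$ while $n_{j_k}\le(2l)^{k}n_{j_0}$; summing $\sum_{j>n_{j_0}}a_j$ over the blocks $(n_{j_k},n_{j_{k+1}}]$, each with fewer than $(2l-1)n_{j_k}$ terms bounded by $a_{n_{j_k}+1}$, the $k$-th block contributes at most $(2l-1)\,n_{j_0}\,a_{n_{j_0}+1}\bigl(2l/(2l+\epsilon)\bigr)^{k}$, a convergent geometric series, so $\norm{x}_{\ell_1}<\infty$ again.

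The only point requiring care is the construction of $(j_k)$ in case~\ref{2ni} and the verification of the two-sided bound $2n_{j_k}\le n_{j_{k+1}}<2l\,n_{j_k}$; everything else is a direct geometric-series computation. Note that $\epsilon>0$ is used decisively, since with $\epsilon=0$ the ratios $l/(l+\epsilon)$ and $2l/(2l+\epsilon)$ would both equal $1$ and the argument---correctly---would break down.
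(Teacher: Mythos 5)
Your proof is correct and follows essentially the same strategy as the paper's: translate the oscillation hypothesis into the decay estimate $a_{n_i+1}\ge(l+\epsilon)a_{n_{i+1}}$ (resp. $\ge(2l+\epsilon)a_{2n_i}$), use the $l$-bounded gaps to control block lengths, and sum the resulting geometric series, with case~(ii) handled by extracting the same doubling subsequence the paper builds with its function $f$. The only cosmetic difference is that the paper first renormalizes the basis and reduces case~(ii) to case~(i) applied to the subsequence with $2l$-bounded gaps, whereas you run the geometric estimate directly in both cases.
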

\begin{proof}
	First, note that the oscillation is not affected by renormings, so by Remark~\ref{remarknormalized} we may assume that both $\XB$ and $\XB^*$ are normalized. \\
	Second, by considering only the sequence $\left(n_{i}\right)_{i\ge i_0}$ and renumbering if necessary, we may assume $i_0=1$ in both \ref{ni+1} and \ref{2ni}. \\
	Third, suppose that \ref{ni+1} holds, and note that for each $i\in \NN_{\ge 2}$, 
	\begin{align*}
		\sum_{m=n_{1+i}+1}^{n_{i+2}}\left\Vert \xx_{k_m}^*\left(x\right)\xx_{k_m}\right\Vert\le&  \left\vert \xx_{k_{n_{1+i}+1}}^*\left(x\right)\right\vert n_{1+i+1}\\
		\le& \left(\frac{l}{l+\epsilon}\right)\left\vert \xx_{k_{n_{1+i-1}+1}}^*\left(x\right)\right\vert n_{1+i}, 
	\end{align*}
	and 
	\begin{align*}
		\left\vert \xx_{k_{n_{1+i-1}+1}}^*\left(x\right)\right\vert n_{1+i}\le&  \left(\frac{l}{l+\epsilon}\right)\left\vert \xx_{k_{n_{1+i-2}+1}}^*\left(x\right)\right\vert n_{i}.
	\end{align*}
	Thus, iterating we get 
	\begin{align*}
		&\sum_{m=n_{1+i}+1}^{{n_{i+2}}}\left\Vert \xx_{k_m}^*\left(x\right)\xx_{k_m}\right\Vert\le \left(\frac{l}{l+\epsilon}\right)^i \left\vert \xx_{k_{n_{1}}+1}^*\left(x\right)\right\vert n_{2}&&\forall i\ge 2,
	\end{align*}
	so the sum over all $i\in \NN_{\ge 2}$ is finite. Since $\XB$ is normalized, it follows that $\left\Vert x\right\Vert_{\ell_1}<+\infty$ . \\
	Finally, suppose \ref{2ni} holds and, for every $i\in \NN$, choose $f\left(i\right)>i$ so that 
	\begin{align*}
		2n_i\le n_{f\left(i\right)}< 2l n_i. 
	\end{align*}
	Let us consider the subsequence $\bd=\left(d_j=n_{i_j}\right)_{j\in \NN}$ of $\bn$ defined as follows: $i_1=1$, $i_2=f\left(i_1\right)$ and generally, $i_{j+1}=f\left(i_j\right)$. Then for all $j\in \NN$, we have
	\begin{align*}
		2d_j\le d_{j+1}<2l d_j. 
	\end{align*}
	It follows that $\bd$ has $2l$-bounded gaps and that for each $j\in \NN$, 
	\begin{align*}
		\osc\left(x,\left\lbrace k_{d_j+1}, \dots, k_{d_{j+1}}\right\rbrace\right)\ge \osc\left(x,\left\lbrace k_{d_j+1}, \dots, k_{2d_{j}}\right\rbrace\right)\ge 2l+\epsilon. 
	\end{align*}
	Hence, \ref{ni+1} holds for $\bd$ and $2l$ instead of $\bn$ and $l$ respectively, and the proof is complete. 
\end{proof}

\begin{cor}\label{corollarydaptivegaps}Let $\XB$ be a VPQG basis of $\XX$. Given $x\in \NN$, $\cO$ a greedy ordering for $x$, and $\bn$ a sequence with bounded gaps, there is a subsequence $\bd$ of $\bn$ such that 
	\begin{align*}
		x=&\lim_{k\to +\infty}G_{d_k}^{\cO}\left(x\right). 
	\end{align*}
\end{cor}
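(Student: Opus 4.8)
The plan is to reduce the statement to Lemmas~\ref{lemmaell1} and~\ref{lemmagaps1} after splitting on the size of $\left\Vert x\right\Vert_{\ell_1}$. First I would fix $l\in\NN_{\ge2}$ so that $\bn$ has $l$-bounded gaps and $K$ with $\XB$ being $K$-VPQG, and recall that $\XB$ is a Markushevich basis by Proposition~\ref{propositioncaracVPQG} (here and below I read the statement with $x\in\XX$).

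If $\left\Vert x\right\Vert_{\ell_1}<+\infty$, I would argue directly: since $\sum_{n}\left\vert\xx_n^*(x)\right\vert\left\Vert\xx_n\right\Vert\le\alpha_1\left\Vert x\right\Vert_{\ell_1}<+\infty$, the series $\sum_{n}\xx_n^*(x)\xx_n$ converges absolutely, and applying each $\xx_m^*$ termwise together with the Markushevich property identifies its sum as $x$. Along any greedy ordering $\cO=(k_j)_j$ for $x$ the moduli $\left\vert\xx_{k_j}^*(x)\right\vert$ are non-increasing in $j$ and summable, hence tend to $0$, which forces every element of $\supp(x)$ to appear among $k_1,\dots,k_n$ for $n$ large; therefore $G_n^{\cO}(x)$ runs through partial sums of a rearrangement of the absolutely convergent series $\sum_{n\in\supp(x)}\xx_n^*(x)\xx_n$ and so $G_n^{\cO}(x)\to x$. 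In this case $\bd=\bn$ works.

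If $\left\Vert x\right\Vert_{\ell_1}=+\infty$, I would invoke Lemma~\ref{lemmaell1}(\ref{2ni}) in contrapositive form with $\epsilon=1$: it cannot be that $\osc\left(x,\left\lbrace k_{n_i+1},\dots,k_{2n_i}\right\rbrace\right)\ge 2l+1$ for all large $i$, so there is a subsequence $\bd=(d_k)_k$ of $\bn$ with $\osc\left(x,\left\lbrace k_{d_k+1},\dots,k_{2d_k}\right\rbrace\right)<2l+1$ for every $k$. Then $M:=\sup_k\osc\left(x,\left\lbrace k_j\right\rbrace_{d_k+1\le j\le 2d_k}\right)\le 2l+1<+\infty$, so Lemma~\ref{lemmagaps1} applied to $\bd$ (with index set $A=\NN$) yields $\lim_{n\to+\infty,\,n\in E}G_n^{\cO}(x)=x$, where $E=\left\lbrace d_k+j\right\rbrace_{k\in\NN,\,0\le j\le d_k}$ contains $\left\lbrace d_k:k\in\NN\right\rbrace$. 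Since $d_k\to+\infty$ and $d_k\in E$, this gives $\lim_{k\to+\infty}G_{d_k}^{\cO}(x)=x$.

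I expect no genuine obstacle: the two lemmas carry the analytic content, and the only things needing a careful (but routine) check are the absolutely-summable case — which uses nothing beyond $\XB$ being Markushevich, itself provided by VPQG — and the bookkeeping that $\{d_k:k\in\NN\}\subseteq E$. The one conceptual point is recognizing that Lemma~\ref{lemmaell1}(\ref{2ni}) is exactly the dichotomy that lets us extract a subsequence along which the oscillations $\osc(x,\{k_j\}_{d_k+1\le j\le 2d_k})$ stay bounded, which is precisely the hypothesis Lemma~\ref{lemmagaps1} requires.
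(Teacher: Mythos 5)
Your proof is correct and follows essentially the same route as the paper: the same dichotomy (oscillation bounded infinitely often along $\bn$ versus not), the same two lemmas (Lemma~\ref{lemmaell1}(\ref{2ni}) to handle the bad case by forcing $\left\Vert x\right\Vert_{\ell_1}<\infty$, Lemma~\ref{lemmagaps1} to pass to the limit along the subsequence with bounded oscillation). The only difference is presentational: the paper branches on whether the set $D$ of good indices is finite or infinite, while you branch on whether $\left\Vert x\right\Vert_{\ell_1}$ is finite or infinite, and then recover the same subsequence via the contrapositive of Lemma~\ref{lemmaell1}; you also spell out in full the (routine) argument that absolute summability of the coefficients plus the Markushevich property forces $G_n^{\cO}(x)\to x$, which the paper leaves implicit.
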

\begin{proof}
	Pick $l\in \NN_{\ge 2}$ so that $\bn$ has $l$-bounded gaps, and $\epsilon>0$. Let
	\begin{align*}
		D:=&\left\lbrace i\in \NN: \osc\left(x,\left\lbrace k_{n_i+1}, \dots, k_{2n_i}\right\rbrace \right)\le 2l+\epsilon \right\rbrace.
	\end{align*}
	If $D\in \NN^{<\infty}$,  Lemma~\ref{lemmaell1} entails that $\left\Vert x\right\Vert_{\ell_1}<+\infty$, so in particular we have 
	\begin{align*}
		x=\lim_{m \to +\infty}G_m^{\cO}\left(x\right).
	\end{align*}
	On the other hand, if $D\not\in \NN^{<\infty}$, an application of Lemma~\ref{lemmagaps1} to the sequence $\left(n_i\right)_{i\in D}$ gives 
	%\begin{align*}
	%&\left\Vert G_{n_{i}}^{\cO}\left(x\right)\right\Vert\le \left(K+2 \Psi\left(1,\frac{1}{2l+\epsilon}\right)\right)\left\Vert x\right\Vert&&\forall i\in E, 
	%\end{align*}
	%and
	\begin{align*}
		\lim_{\substack{i\to +\infty\\ i\in D}}G_{n_i}^{\cO}\left(x\right)=x. 
	\end{align*}
\end{proof}
In the next section, we will use a   variant of the argument of Lemma~\ref{lemmaeachlog} to prove that if a VPQG basis is democratic, it is quasi-greedy. 
\section{Cesàro quasi-greedy bases, de la Valée Poussin quasi-greedy bases, and democracy. }\label{relation}

In the previous section, we have studied the convergence of the greedy algorigthm through subsequences, for VPQG bases. Some of the arguments we gave thereof will be of use in this section, where we study the connection between VPQG bases and democracy, concretely, we study the relation with the almost greediness. We will also study subsequences of VPQG bases and further conditions for equivalence between VPQG and QG bases. 

In \cite{DKKT2003}, the authors introduced the concept of almost greedy bases, which is fair to say is one of the three most studied kinds of greedy-type bases - the two others are greedy and quasi-greedy bases. 

\begin{defi}\label{definitionalmostgreedy}Let $\XB$ be a basis of a Banach space $\XX$. We say that $\XB$ is $C$-almost greedy if
\begin{align*}
&\left\Vert x-P_A\left(x\right)\right\Vert\le C \left\Vert x-P_B\left(x\right)\right\Vert&&\forall x\in \XX, m\in \NN, A\in \cG\left(x,m,1\right), B\in \NN^{\le m}.
\end{align*}
\end{defi}
A classical result of greedy approximation states that a basis is almost greedy if and only if it is quasi-greedy and democratic \cite[Theorem 3.6]{DKKT2003}. Below, we extend this result to the context of CQG and VPQG bases. 

\begin{proposition}\label{propositionvpqgdemphilog}Let $\XB$ be a de la Vallée Poussin quasi-greedy basis. If $\XB$ is democratic, it is quasi-greedy. 
\end{proposition}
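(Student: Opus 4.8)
The plan is to prove that $\XB$ is quasi-greedy by bounding $\|G_m^{\cO}(x)\|$ by a fixed multiple of $\|x\|$ for every $x\in\XX$, every greedy ordering $\cO=(k_j)_{j\in\NN}$ of $x$ and every $m\in\NN$. The first step is the reduction provided by Lemma~\ref{lemmasortofqg}: with $\tilde x:=x-G_m^{\cO}(x)$, for which $\cO_m=(k_{m+j})_{j\in\NN}$ is a greedy ordering, we have $\|G_m^{\cO}(x)\|\le K\|x\|+\|C_m^{g,\cO_m}(\tilde x)\|$, where $K$ is the VPQG constant and
\[
C_m^{g,\cO_m}(\tilde x)=\sum_{j=1}^{m}\frac{m+1-j}{m}\,\xx_{k_{m+j}}^*(x)\,\xx_{k_{m+j}}.
\]
Thus it is enough to bound this last sum, which is supported on the block $B_0:=\{k_{m+1},\dots,k_{2m}\}$ of cardinality $m$, all of whose coefficients have modulus at most $|\xx_{k_{m+1}}^*(x)|\le|\xx_{k_m}^*(x)|$, and which carries weights in $(0,1]$.

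The second step brings in democracy. Since $\XB$ is VPQG it is nearly unconditional (Lemma~\ref{lemma: VPQG->NU}) and UCC, hence SUCC (Remark~\ref{remarkUCC}); write $\varphi(k):=\sup_{|A|=k}\|\Ind_{1,A}\|$. By SUCC, $\varphi$ is essentially nondecreasing, and by democracy together with UCC, $\varphi(|A|)\lesssim\|\Ind_{\varepsilon,A}\|$ for every finite $A$ and every $\varepsilon\in\EE^A$. Split $B_0$ into consecutive blocks $E^{(1)},E^{(2)},\dots$, each $E^{(n)}$ being a maximal initial segment of the unused part of $B_0$ of oscillation at most $4$; since the coefficients are nonincreasing along $\cO$, the moduli $\mu_n:=\max_{l\in E^{(n)}}|\xx_l^*(x)|$ satisfy $\mu_{n+1}<\tfrac14\mu_n$, so $\sum_n\mu_n\le\tfrac43|\xx_{k_{m+1}}^*(x)|$. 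On each $E^{(n)}$, factoring out $\mu_n$, using UCC, and using $|E^{(n)}|\le m$ gives, for scalars of modulus $\le1$,
\[
\Bigl\|\sum_{l\in E^{(n)}}b_l\,\xx_l^*(x)\,\xx_l\Bigr\|\ \lesssim\ \mu_n\,\varphi(|E^{(n)}|)\ \lesssim\ \mu_n\,\varphi(m),
\]
and summing over $n$ yields $\|C_m^{g,\cO_m}(\tilde x)\|\lesssim|\xx_{k_{m+1}}^*(x)|\,\varphi(m)$. Finally I would invoke the truncation-type estimate $\min_{l\in A}|\xx_l^*(x)|\,\varphi(|A|)\lesssim\|x\|$ for $A\in\cG(x,m)$, applied to $A=\{k_1,\dots,k_m\}$, whose least coefficient modulus is $\ge|\xx_{k_{m+1}}^*(x)|$; this gives $|\xx_{k_{m+1}}^*(x)|\,\varphi(m)\lesssim\|x\|$, and combining the three steps produces $\|G_m^{\cO}(x)\|\lesssim\|x\|$ with an absolute‑type constant, so $\XB$ is quasi-greedy.

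The one nonelementary ingredient is the truncation estimate, and this is where I would use ``a variant of the argument of Lemma~\ref{lemmaeachlog}''. It should follow from near unconditionality (equivalently, from being quasi-greedy for largest coefficients) together with democracy, and can also be obtained directly in the spirit of that lemma by a doubling/contradiction scheme: assuming the estimate fails for some $x,\cO,m$ and a large constant, split the scales $2^{l}m$ ($0\le l\le\log_2 m+3$) into those with $\osc(x,\{k_{l+1},\dots,k_{2l}\})$ bounded — controlled directly by Lemma~\ref{lemmagaps1} — and those without, on which the coefficients decay geometrically, and telescope exactly as in the proof of Lemma~\ref{lemmaeachlog} to reach a contradiction with the bound on $\|G_{2^{i_0}m}^{\cO}(x)\|$ furnished there, democracy being what allows one to compare the $\|\Ind\|$'s of greedy sets of the same cardinality. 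The crux — and the reason both hypotheses are indispensable — is that without democracy the quantities $\|\Ind_{1,E^{(n)}}\|$ are not comparable to the single value $\varphi(m)$, while without de la Vallée–Poussin quasi-greediness the reduction of the first step is unavailable; there are democratic non–quasi-greedy bases and quasi-greedy-for-largest-coefficients non–quasi-greedy bases, so the proof must genuinely interlace the two, the main technical point being that the geometrically shrinking magnitudes $\mu_n$ absorb the otherwise uncontrolled number of dyadic blocks inside $B_0$.
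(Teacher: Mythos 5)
Your reduction via Lemma~\ref{lemmasortofqg} and the idea of splitting $B_0=\{k_{m+1},\dots,k_{2m}\}$ into maximal oscillation-$\le 4$ blocks with geometrically decaying tops $\mu_n$ is sound and is in the same spirit as the paper's proof. But the argument then funnels everything into the single claim
\[
\min_{l\in A}\bigl|\xx_l^*(x)\bigr|\,\varphi(|A|)\ \lesssim\ \|x\|\qquad\text{for } A\in\cG(x,m),
\]
which is a full truncation-quasi-greedy (TQG) estimate modulo democracy, and this is exactly where the gap lies. Your first suggestion for closing it --- that it ``should follow from near unconditionality (equivalently, QGLC) together with democracy'' --- is not correct: near unconditionality is a \emph{threshold-dependent} property (the constant $\phi(t)$ may blow up as $t\to 0$), so it only controls projections onto $A$ when the oscillation of $x$ on $A$ is bounded. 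Trying to derive the truncation estimate from QGLC runs into circularity: writing the truncation $T_A(x):=a\,\Ind_{\varepsilon(x),A}+(x-P_A(x))$ with $a=\min_{l\in A}|\xx_l^*(x)|$, QGLC gives $a\,\|\Ind_{\varepsilon(x),A}\|\le C\,\|T_A(x)\|$, but one then needs $\|T_A(x)\|\lesssim\|x\|$, which is the statement being proved. The gap between near unconditionality and TQG is precisely what makes this proposition nontrivial.

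The paper avoids asserting a blanket TQG estimate. Instead it defines $D:=\{m:\osc(x,\{k_{m+1},\dots,k_{2m}\})\le 4\}$ and, given $n$, anchors at $m_1:=\max\{m\in D:m<n\}$. At the anchor $m_1$ the oscillation is bounded, so Lemma~\ref{lemmasortoftqg3} gives the truncation-type estimate \emph{for that scale only} with constant $\Psi(1,1/4)$; it then telescopes forward along the dyadic chain $2m_1,4m_1,\dots$ (where $m_1$'s maximality forces geometric decay of the coefficients), with superdemocracy converting each dyadic block into a controlled multiple of $\|\Ind_{\varepsilon,\{k_{m_1+1},\dots,k_{2m_1}\}}\|$. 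Your second suggestion (a doubling/contradiction scheme in the style of Lemma~\ref{lemmaeachlog}) is indeed pointed in the right direction and is morally what the paper does, but the sketch you give doesn't visibly produce the truncation estimate: Lemma~\ref{lemmaeachlog} only bounds one greedy sum $\|G_{2^{i_0}m}^{\cO}(x)\|$ and does not by itself yield a lower bound comparable to $\varphi(\cdot)$ against which to contradict. To close the gap one would essentially have to redo the anchor-at-$m_1$-and-telescope argument of the paper, at which point there is no longer any need for a separate TQG intermediate step.
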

\begin{proof}
	Suppose $\XB$ is $K_1$-VPQG. Since it is democratic, by Remark~\ref{remarkUCC}, it is superdemocratic, say with constant $K_2$. Given $x\in \XX$ and $\cO$ a greedy ordering for $x$, let
	\begin{align*}
		D:=&\left\lbrace m\in \NN: \osc\left(x,\left\lbrace k_{m+1}, \dots, k_{2m}\right\rbrace\right)\le 4 \right\rbrace. 
	\end{align*}
	To find a uniform upper bound for $\left(\left\Vert G_n^{\cO}\left(x\right)\right\Vert\right)_{n\in \NN}$, we consider first the following two cases:  
	\begin{enumerate}[\rm 1)]
		\item \label{inE}If $m\in D$, by Lemmas~\ref{lemmasortoftqg3} and~\ref{lemmagaps1} we have
		\begin{align}
			&\sup_{\left\vert b_j\right\vert\le 1\;\forall 1\le j\le m} \left\Vert \sum_{j=1}^{m}b_j\xx_{k_{m+j}}^*\left(x\right)\xx_{k_{m+j}}\right\Vert\le \Psi\left(1, \frac{1}{4}\right)\left\Vert x\right\Vert; \nonumber\\
			&\left\Vert G_{m+j}\left(x\right)\right\Vert\le \left(K_1+2\Psi\left(1,\frac{1}{4}\right)\right)\left\Vert x\right\Vert\; \forall 0\le j\le m. \label{agb2}
		\end{align}
		\item \label{2mnotinE2} If $m\in D$ and $2m\not\in D$, let 

%		
%		\begin{align*}
%			x_1:=&x-G_{m}\left(x\right). 
%		\end{align*}
%		By \eqref{agb2}, 
%		\begin{align}
%			\left\Vert x_1\right\Vert\le& \left(K_3:= \left(1+K_1+2\Psi\left(1,\frac{1}{4}\right)\right)\right)\left\Vert x\right\Vert. \label{agb3}
%		\end{align}
%		Note that $\cO^{m}$ is a greedy ordering for $x_1$, and that (since $m\in D$),
%		\begin{align}
%			\osc\left(x_1, \left\lbrace k_{m+1}, \dots, k_{2m}\right\rbrace\right)=&\osc\left(x, \left\lbrace k_{m+1}, \dots, k_{2m}\right\rbrace\right)\le 4. \label{agosc1}
%		\end{align}
%		Let  
%		
\begin{align*}
			J_m:=&\left\lbrace i\in \NN: 2^lm \not\in D\quad\forall 1\le l\le i\right\rbrace.
		\end{align*}
		Note that either $J_m=\NN$ or $J_m$ is a finite interval of natural numbers, that $1\in J_m$ and that if $i\in J_m$, then
				\begin{align*}
			\left\vert \xx_{k_{2^i m+1}}^*\left(x\right)\right\vert \le& \frac{1}{4^{i-1}} \left\vert \xx_{k_{2m+1}}^*\left(x\right)\right\vert.
		\end{align*}
%		\begin{align*}
%			\left\vert \xx_{k_{2^i m+1}}^*\left(x\right)\right\vert \le& \frac{1}{4^{i-1}} \left\vert \xx_{k_{2m+1}}^*\left(x\right)\right\vert =\frac{1}{4^{i-1}} \left\vert \xx_{k_{2m+1}}^*\left(x_1\right)\right\vert.
%		\end{align*}
		From this, the triangle inequality, convexity and the superdemocracy condition we deduce that for all such $i$, 
		\begin{align*}
			&\sup_{\left\vert b_j\right\vert\le 1\;\forall 1\le j\le 2^im}\left\Vert\sum_{j=1}^{2^im}b_j\xx_{k_{2^im+j}}^*\left(x\right)\xx_{k_{2^im+j}}\right\Vert\\
			&\le \sum_{d=1}^{2^i}\sup_{\left\vert b_j\right\vert\le 1\;\forall 1+\left(d-1\right)m\le j\le dm}\left\Vert\sum_{j=1+\left(d-1\right)m}^{dm}  b_j\xx_{k_{2^im+j}}^*\left(x\right)\xx_{k_{2^im+j}} \right\Vert \\
			&\le \frac{2^{i}K_2}{4^{i-1}} \left\vert \xx_{k_{2m+1}}^*\left(x\right)\right\vert\left\Vert \Ind_{\varepsilon\left(x\right),\left\lbrace k_{m+1},\dots, k_{2m}\right\rbrace}\right\Vert \underset{\substack{ m\in D}}{\le} \frac{2K_2}{2^{i-1}}\Psi\left(1,\frac{1}{4}\right)\left\Vert x\right\Vert.
		\end{align*}
		Hence, 
		\begin{align}
			&\sum_{i\in J_m}\sup_{\left\vert b_j\right\vert\le 1\;\forall 1\le j\le 2^im}\left\Vert\sum_{j=1}^{2^im}b_j\xx_{k_{2^im+j}}^*\left(x\right)\xx_{k_{2^im+j}}\right\Vert\le 4K_2\Psi\left(1,\frac{1}{4}\right)\left\Vert x\right\Vert.\label{agb5}
		\end{align}
	\end{enumerate} 
	Now given $n\in \NN$, we can find an upper bound for $\left\Vert G_n\left(x\right)\right\Vert$ as follows. 
	\begin{enumerate}[\rm (i)]
		%\item \label{1}If $n\le 2$, then $\left\Vert G_n\left(x\right)\right\Vert\le 2\alpha_3\left\Vert x\right\Vert$.
		\item  \label{notinE1} If there is $m\in D$ such that $m\le n\le 2m$,  by \eqref{agb2} we have
		\begin{align*}
			\left\Vert G_n\left(x\right)\right\Vert\le& \left(K_1+2\Psi\left(1,\frac{1}{4}\right)\right)\left\Vert x\right\Vert.
		\end{align*}
		\item   \label{notinE12} If there is no $m\in D$ such that $m\le n\le 2m$, let 
		\begin{align*}
			m_1:=&\max\left\lbrace m\in D: m<n\right\rbrace. 
		\end{align*}
		Note that $m_1$ is well-defined because $1\in D$. Define $J_{m_1}$ as in the proof of \ref{2mnotinE2}. By hypothesis $2m_1<n$, thus $2m_1\not\in D$ and $1\in J_{m_1}$. \\
		We claim that there is $i\in J_{m_1}$ such that $2^{i}m_1< n\le 2^{i+1}m_1$. Indeed, since $2m_1<n$, if this claim were false $J_{m_1}$ would have to be a finite interval of natural numbers, say  $J_{m_1}=\left\lbrace 1,\dots, \beta\right\rbrace$ with $2^{\beta+1}m_1<n$. On the other hand, from the definition of $J_{m_1}$ we would get  $2^{\beta+1}m_1\in D$, contradicting our choice of $m_1$. \\
		From the above claim, \eqref{agb2}, \eqref{agb5} and the triangle inequality we obtain
		\begin{align*}
			\left\Vert G_n\left(x\right)\right\Vert\le& \left\Vert G_{2m_1}\left(x\right)\right\Vert+\sum_{i\in J_{m_1}}\sup_{\left\vert b_j\right\vert\le 1\;\forall 1\le j\le 2^im_1}\left\Vert\sum_{j=1}^{2^im_1}b_j\xx_{k_{2^im_1+j}}^*\left(x\right)\xx_{k_{2^im_+j}}\right\Vert\\
			\le& \left(K_1+2\Psi\left(1,\frac{1}{4}\right)+4K_2\Psi\left(1,\frac{1}{4}\right)\right) \left\Vert x\right\Vert. 
		\end{align*}
	\end{enumerate}
	Since we have considered all possible cases, we conclude that $\XB$ is $K_4$-quasi-greedy, with 
	\begin{align*}
		K_4=& K_1+2\Psi\left(1,\frac{1}{4}\right)+4K_2\Psi\left(1,\frac{1}{4}\right).
	\end{align*}
\end{proof}
Combining Proposition~\ref{propositionvpqgdemphilog} with the classical characterization of almost greedy bases of \cite[Theorem 3.3]{DKKT2003} we get the following. 
\begin{cor}\label{corollaryag} Let $\XB$ be a basis of a Banach space $\XX$. The following are equivalent: 
	\begin{itemize}
		\item $\XB$ is almost greedy. 
		\item $\XB$ is Cesàro quasi-greedy and democratic. 
		\item $\XB$ is de la Vallée Poussin quasi-greedy and democratic. 
	\end{itemize}
\end{cor}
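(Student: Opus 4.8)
The plan is to prove the three conditions equivalent by running the cycle
\[
\text{almost greedy}\;\Longrightarrow\;\text{CQG and democratic}\;\Longrightarrow\;\text{VPQG and democratic}\;\Longrightarrow\;\text{almost greedy},
\]
carrying the democracy hypothesis along unchanged throughout: democracy is a property of the quantities $\left\Vert\sum_{n\in A}\xx_n\right\Vert$ alone and is untouched by any of the implications below. The two substantive ingredients are Proposition~\ref{propositionvpqgdemphilog}, which says that a democratic VPQG basis is quasi-greedy, and the classical theorem of \cite{DKKT2003} that a basis is almost greedy if and only if it is quasi-greedy and democratic; everything else is a one-line estimate.

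For the first arrow, suppose $\XB$ is almost greedy; by the classical theorem it is quasi-greedy, say $K$-quasi-greedy, and democratic. I would then record the averaging identity: for $x\in\XX$, a greedy ordering $\cO=\left(k_j\right)_{j\in\NN}$ and $n\in\NN$,
\[
C_n^{g,\cO}(x)=\sum_{j=1}^{n}\frac{n+1-j}{n}\,\xx_{k_j}^*(x)\xx_{k_j}=\frac1n\sum_{l=1}^{n}G_l^{\cO}(x),
\]
since the index $j$ contributes to $G_l^{\cO}(x)$ for exactly the $n+1-j$ values $l=j,\dots,n$. As each $G_l^{\cO}(x)=P_{\{k_1,\dots,k_l\}}(x)$ is a greedy projection, $\left\Vert G_l^{\cO}(x)\right\Vert\le K\left\Vert x\right\Vert$, hence $\left\Vert C_n^{g,\cO}(x)\right\Vert\le K\left\Vert x\right\Vert$; thus $\XB$ is $K$-CQG, and together with democracy this gives the first arrow.

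For the second arrow, if $\XB$ is $C$-CQG then directly from the definition of VPQG and the triangle inequality,
\[
\left\Vert 2C_{2n}^{g,\cO}(x)-C_n^{g,\cO}(x)\right\Vert\le 2\left\Vert C_{2n}^{g,\cO}(x)\right\Vert+\left\Vert C_n^{g,\cO}(x)\right\Vert\le 3C\left\Vert x\right\Vert,
\]
so $\XB$ is $3C$-VPQG (still democratic). For the third arrow, if $\XB$ is VPQG and democratic, Proposition~\ref{propositionvpqgdemphilog} makes it quasi-greedy, and then the classical characterization of \cite{DKKT2003}, together with democracy, yields that $\XB$ is almost greedy; this closes the cycle. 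Since every link except Proposition~\ref{propositionvpqgdemphilog} and the cited classical theorem is a trivial estimate, there is no real obstacle here: the proof is an assembly step, and the only point requiring any care is the averaging identity for $C_n^{g,\cO}$ in the first arrow (one may alternatively pass from quasi-greediness to VPQG through \eqref{remark12v} of Lemma~\ref{lemmasortofqg}, but the route above is shorter).
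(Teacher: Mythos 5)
Your proposal is correct and takes essentially the same route as the paper: the paper's proof is the one-line assembly of Proposition~\ref{propositionvpqgdemphilog} with the classical characterization from \cite{DKKT2003}, and your cycle simply spells out the easy forward implications (quasi-greedy $\Rightarrow$ CQG via the averaging identity $C_n^{g,\cO}=\frac1n\sum_{l=1}^n G_l^{\cO}$, and CQG $\Rightarrow$ VPQG by the triangle inequality) that the paper leaves implicit. Both your averaging identity and the $3C$ bound are verified; no gaps.
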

Our next result shows another sufficient condition for the equivalence between VPQG and QG bases.
\begin{lemma}\label{lemmaSpread}Let $\XB$ be a basis of a Banach space $\XX$. If $\XB$ is VPQG and has a subsequence with a spreading model equivalent to the canonical unit vector basis of $\mathtt{c}_{0}$, it is quasi-greedy. 
\end{lemma}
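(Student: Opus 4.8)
The plan is to adapt the dyadic-scale argument of Lemma~\ref{lemmaeachlog} and Proposition~\ref{propositionvpqgdemphilog} so as to bound $\left\Vert G_n^{\cO}\left(x\right)\right\Vert$ uniformly in $x$, $\cO$ and $n$, replacing the single appeal to democracy made there by a consequence of the spreading-model hypothesis (note that $\XB$ itself need not be democratic, so one cannot simply invoke Proposition~\ref{propositionvpqgdemphilog}). Since $\XB$ is VPQG, by Lemma~\ref{lemma: VPQG->NU} and Lemma~\ref{lemmaequivqglc} it is nearly unconditional, hence QGLC; by Remark~\ref{remarkUCC} it is UCC; by Proposition~\ref{propositioncaracVPQG} it is a strong Markushevich basis; and we have at our disposal the threshold function $\Psi$ of Lemma~\ref{lemmasortoftqg3} together with Lemmas~\ref{lemmasortofqg}, \ref{lemmagaps1} and~\ref{lemmaell1}.

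Fix $x$, a greedy ordering $\cO=\left(k_j\right)_{j\in\NN}$ and a target $n$, and put $D:=\left\lbrace m\in\NN:\osc\left(x,\left\lbrace k_{m+1},\dots,k_{2m}\right\rbrace\right)\le 4\right\rbrace\ni 1$. If $m\le n\le 2m$ for some $m\in D$, then Lemma~\ref{lemmagaps1} already gives $\left\Vert G_n^{\cO}\left(x\right)\right\Vert\le\left(K+2\Psi\left(1,\tfrac14\right)\right)\left\Vert x\right\Vert$. Otherwise set $m_1:=\max\left(D\cap\left[1,n\right)\right)$, so $2m_1<n$ and $2^\ell m_1\notin D$ whenever $1\le\ell$ and $2^\ell m_1<n$; splitting $G_n^{\cO}\left(x\right)=G_{2m_1}^{\cO}\left(x\right)+\sum_{j=2m_1+1}^{n}\xx_{k_j}^*\left(x\right)\xx_{k_j}$, the first term is controlled via Lemma~\ref{lemmagaps1}, and on each dyadic block $\left(2^\ell m_1,2^{\ell+1}m_1\right]$ entering the tail the leading coefficient is at most $4^{-(\ell-1)}\left\vert\xx_{k_{2m_1+1}}^*\left(x\right)\right\vert$, because the oscillation there exceeds $4$. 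Cutting each such block into $2^\ell$ pieces of cardinality $m_1$ and using convexity, exactly as in Proposition~\ref{propositionvpqgdemphilog} the tail is dominated by a convergent geometric series whose general term is a constant multiple of $2^{-\ell}\left\vert\xx_{k_{2m_1+1}}^*\left(x\right)\right\vert$ times the supremum of $\left\Vert\Ind_{\eta,J}\right\Vert$ over cardinality-$m_1$ sets $J$, while $\left\vert\xx_{k_{2m_1+1}}^*\left(x\right)\right\vert\,\left\Vert\Ind_{\varepsilon\left(x\right),\left\lbrace k_{m_1+1},\dots,k_{2m_1}\right\rbrace}\right\Vert\le\Psi\left(1,\tfrac14\right)\left\Vert x\right\Vert$ by Lemma~\ref{lemmasortoftqg3}, the reference block being flat. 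Thus the whole matter reduces to comparing the indicator norms of the cardinality-$m_1$ blocks thrown up by the greedy ordering of $x$ with that of the flat reference block — which is precisely the point at which Proposition~\ref{propositionvpqgdemphilog} used superdemocracy.

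Here the hypothesis takes over: if $\left(\xx_{n_k}\right)_k$ generates a spreading model equivalent to the $\mathtt{c}_0$-basis, then for each cardinality $m$ one may choose a set of that cardinality sufficiently far out along $\left(n_k\right)$ whose indicator sum, with any choice of signs, is within $1$ of the spreading-model norm, hence bounded by an absolute multiple of the equivalence constant; equivalently, the lower democracy function of $\XB$ is bounded, so $\XB$ has ``democratic witnesses'' of every cardinality. The plan is to combine this with the dichotomy of Lemma~\ref{lemmaell1}: if the oscillation along $\cO$ never returns below a fixed level then $\left\Vert x\right\Vert_{\ell_1}<\infty$, $\sum_j\xx_{k_j}^*\left(x\right)\xx_{k_j}$ converges absolutely, and $G_m^{\cO}\left(x\right)\to x$, so the greedy sums are bounded; otherwise the recurring flat blocks serve as reference blocks and the bounded witnesses, together with UCC and $\Psi$, furnish the estimate that replaces superdemocracy, after which the geometric bookkeeping of Proposition~\ref{propositionvpqgdemphilog} concludes.

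The main obstacle is the asymptotic, positional character of a spreading model: it bounds indicator sums only of sets lying far out along $\left(n_k\right)$, whereas the blocks produced by the greedy ordering of a general $x$ are in arbitrary position and may avoid the subsequence altogether. I expect to handle this by arguing by contradiction — using the reduction above to extract, from a hypothetical sequence witnessing the failure of quasi-greediness, a configuration of cardinality-$m$ blocks whose indicator norms grow without bound, and then transplanting that configuration, by means of UCC and $\Psi$, into the far-out regime where the $\mathtt{c}_0$ spreading-model estimate forces a contradiction. Everything else consists of the routine geometric estimates already present in Lemma~\ref{lemmaeachlog} and Proposition~\ref{propositionvpqgdemphilog}.
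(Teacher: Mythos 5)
Your proposal does not match the paper's proof and, as you yourself suspect, it has a genuine gap at exactly the point you flag. The dyadic-block argument of Proposition~\ref{propositionvpqgdemphilog} hinges on \emph{superdemocracy}: at the key step one must dominate $\sup_{\left\vert b_j\right\vert\le 1}\bigl\Vert \sum_{j}b_j\xx_{k_{2^i m+j}}^*\left(x\right)\xx_{k_{2^i m+j}}\bigr\Vert$ by decomposing into subblocks of cardinality $m$ and comparing each subblock's indicator norm with $\bigl\Vert \Ind_{\varepsilon\left(x\right),\left\lbrace k_{m+1},\dots,k_{2m}\right\rbrace}\bigr\Vert$. That is a two-sided comparison between indicator sums over \emph{arbitrary} sets of the same cardinality. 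A $\mathtt{c}_0$-spreading model on a subsequence supplies only a \emph{one-sided, positional} fact: for each $m$ there exist sets of cardinality $m$, located far out along the subsequence, with uniformly bounded indicator norms. It gives no upper bound on $\left\Vert \Ind_{\eta,J}\right\Vert$ for a general $J$ of cardinality $m$, which is what the greedy ordering of $x$ throws up. The two remedies you gesture at cannot close this gap: UCC only permutes signs, not positions, and $\Psi$ controls projections of a fixed element, not free indicator sums. Your proposed ``transplanting by contradiction'' would in effect have to manufacture democracy from the spreading model, and that simply is not available; a basis can have such a spreading model on a subsequence while having indicator norms of arbitrary cardinality-$m$ sets that grow without bound.

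The paper's proof is both shorter and structurally different, and it avoids any comparison between arbitrary blocks. For $x$ of finite support, $m\in\NN$ and a greedy ordering $\cO=\left(k_j\right)_j$ of $x$, one \emph{picks} a set $A>\supp\left(x\right)$, $\left\vert A\right\vert=m$, far out on the subsequence so that $\left\Vert \Ind_{\varepsilon,A}\right\Vert\le M$ for all signs, and forms the augmented element $y:=x+\left\vert \xx_{k_m}^*\left(x\right)\right\vert\Ind_A$. Choosing a greedy ordering $\cO'$ for $y$ that reproduces $\cO$ on the first $m$ terms and then enumerates $A$, one has $G_m^{\cO}\left(x\right)=G_m^{\cO'}\left(y\right)$, and by Remark~\ref{remarkVPQG} the VPQG bound on $y$ reads
\begin{align*}
\left\Vert G_m^{\cO}\left(x\right)+\sum_{j=1}^{m}\left(\tfrac{m+1-j}{m}\right)\xx_{k'_{m+j}}^*\left(y\right)\xx_{k'_{m+j}}\right\Vert\le K\left\Vert y\right\Vert.
\end{align*}
The correction term is a convex-coefficient indicator sum supported on $A$, hence by convexity its norm is at most $\left\vert \xx_{k_m}^*\left(x\right)\right\vert\,M\le M\alpha_2\left\Vert x\right\Vert$, and $\left\Vert y\right\Vert\le\left(1+M\alpha_2\right)\left\Vert x\right\Vert$. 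This yields a uniform quasi-greedy bound on finitely supported $x$, and the general case follows by density plus a small-perturbation argument. In short: where you try to replace democracy inside the dyadic bookkeeping, the paper instead uses the spreading model only to supply a controlled far-out ``padding block'' and applies the VPQG inequality to the augmented vector — no comparison between arbitrary blocks is ever needed. I would recommend abandoning the dyadic route here and adopting the augmentation trick.
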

\begin{proof}
	It is easy to check that the spreading model condition entails that there is $M>0$ such that, for every $m_1, m_2\in \NN$, there is $A>m_1$ with $\left\vert A\right\vert=m_2$ such that
	\begin{align}
		&\left\Vert \Ind_{\varepsilon, A}\right\Vert\le M&&\forall \varepsilon\in \EE^A.\label{smallcharac}
	\end{align}
	Suppose $\XB$ is $K$-VPQG, fix $x\in \XX$ with finite support, $m\in \NN$, and let $\cO=\left(k_j\right)_{j\in \NN}$ be a greedy ordering for $x$. Choose $A>\supp\left(x\right)$ with $\left\vert A\right\vert=m$ so that \eqref{smallcharac} holds, and let 
	\begin{align*}
		y:=&x+\left\vert \xx_{k_{m}}^*\left(x\right)\right\vert \Ind_{A}.
	\end{align*}
	Pick $\cO'=\left(k_j'\right)_{j\in \NN}$ a greedy ordering for $y$ so that 
	\begin{align*}
		&k_j=k_j'\;\forall 1\le j\le m;&& A=\left\lbrace k_j'\right\rbrace_{m+1\le j\le 2m}. 
	\end{align*}
	Since $G_m^{\cO}\left(x\right)=G_m^{\cO'}\left(y\right)$, by Remark~\ref{remarkVPQG},  Lemma~\ref{lemmasortofqg}, \eqref{smallcharac} and convexity, we have
	\begin{align*}
		\left\Vert G_{m}^{\cO}\left(x\right)\right\Vert\le& \left\Vert G_{m}^{\cO'}\left(y\right)+\sum_{j=1}^{m}\left(\frac{m+1-j}{m}\right) \xx_{k'_{m+j}}^*\left(y\right) \xx_{k'_{m+j}}\right\Vert\\
		+&\left\Vert \sum_{j=1}^{m}\left(\frac{m+1-j}{m}\right)\xx_{k'_{m+j}}^*\left(y\right) \xx_{k'_{m+j}}\right\Vert\\
		\le& K\left\Vert y\right\Vert +\left\Vert \sum_{j=1}^{m}\left(\frac{m+1-j}{m}\right)\xx_{k'_{m+j}}^*\left(y\right) \xx_{k'_{m+j}}\right\Vert\\
		\le& K\left\Vert x\right\Vert+K\left\Vert \left\vert \xx_{k_{m}}^*\left(x\right)\right\vert \Ind_{A}\right\Vert+ \left\vert \xx_{k_{m}}^*\left(x\right)\right\vert \sup_{\varepsilon\in \EE^{A}}\left\Vert \Ind_{\varepsilon, A}\right\Vert\\
		\le& \left(K+KM\alpha_2+M\alpha_2\right)\left\Vert x\right\Vert. 
	\end{align*}
	This proves the quasi-greedy condition for elements with finite support, and the general case follows by a standard density and ``small perturbations'' argument (see for example \cite[Corollary 2.3]{O2018}).
\end{proof}
Proposition~\ref{propositionvpqgdemphilog} and Lemma~\ref{lemmaSpread} can be combined by the classical result of \cite[Proposition 5.3]{DKK2003} to extract from any VPQG basis a quasi-greedy subsequence. 
\begin{cor}\label{corollarysubsec}If $\XB$ is a VPQG basis of $\XX$, then either it is quasi-greedy or it has an almost greedy subsequence. 
\end{cor}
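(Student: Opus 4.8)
The plan is to combine the two sufficient conditions for quasi-greediness established above with a classical subsequence-splitting result. Assume $\XB$ is \emph{not} quasi-greedy; the goal is then to produce an infinite $M\subset\NN$ for which $(\xx_n)_{n\in M}$ is an almost greedy basis of $\XX_M:=\overline{\langle\xx_n:n\in M\rangle}$.

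The first step is the elementary observation that the VPQG property passes to subbases: if $M\subset\NN$ is infinite and $\XB$ is $K$-VPQG, then $(\xx_n)_{n\in M}$ is a $K$-VPQG basis of $\XX_M$. This is routine. For $x\in\XX_M$ one has $\supp(x)\subset M$, so a greedy ordering of $x$ with respect to $(\xx_n)_{n\in M}$ can be completed to a greedy ordering of $x$ with respect to $\XB$ by listing the indices of $\NN\setminus M$ after the support of $x$ is exhausted (those indices carry zero coefficients), and under this correspondence the sums $2C^{g,\cO}_{2n}(x)-C^{g,\cO}_{n}(x)$ — which involve only indices in $\supp(x)$ — are unchanged; since $\|\cdot\|_{\XX_M}$ is the restriction of $\|\cdot\|$, the defining inequality transfers with the same constant.

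The second step is to invoke the classical dichotomy \cite[Proposition 5.3]{DKK2003}: the seminormalized basis $\XB$ admits an infinite subsequence $\XB'=(\xx_n)_{n\in M}$ which is either democratic or has a spreading model equivalent to the unit vector basis of $\mathtt{c}_{0}$ (if that statement records an $\ell_1$-equivalent alternative separately, such a subsequence is in particular democratic, hence absorbed into the first case). The $\mathtt{c}_{0}$ alternative cannot occur here: a subsequence of $\XB$ with a $\mathtt{c}_{0}$ spreading model would force $\XB$ itself to be quasi-greedy by Lemma~\ref{lemmaSpread}, contradicting our assumption. Therefore $\XB'$ is democratic.

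Finally, by the first step $\XB'$ is VPQG, and being democratic it is almost greedy by Corollary~\ref{corollaryag} (equivalently, by Proposition~\ref{propositionvpqgdemphilog} combined with \cite[Theorem 3.3]{DKKT2003}). This exhibits an almost greedy subsequence of $\XB$ and completes the proof. The only non-formal ingredient is the cited subsequence dichotomy; beyond that there is no real obstacle, the whole content being the complementary roles of Lemma~\ref{lemmaSpread} (which disposes of the $\mathtt{c}_{0}$ case globally) and Corollary~\ref{corollaryag} (which disposes of the democratic case on the subsequence).
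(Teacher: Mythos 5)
Your overall plan matches the paper's, but there is a genuine gap in the second step. \cite[Proposition~5.3]{DKK2003} is a dichotomy for \emph{weakly null} seminormalized sequences, not for arbitrary seminormalized bases; that is precisely why the paper's proof treats the weakly null and non--weakly-null cases separately. Your parenthetical hedge (about an ``$\ell_1$-equivalent alternative'' possibly appearing in the conclusion) does not repair this, because the problem is with the hypothesis of the cited proposition, not with an extra branch of its conclusion: if $\XB$ is not weakly null you cannot invoke Proposition~5.3 at all, and your argument then produces nothing.

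The paper supplies the missing branch directly. If $\XB$ is not weakly null, extract $\epsilon>0$, a functional $x^*\in\XX^*$ and a subsequence $\YB=(\xx_{n_i})_{i\in\NN}$ with $|x^*(\xx_{n_i})|\ge\epsilon$ for every $i$. Since a VPQG basis is unconditional for constant coefficients (Lemma~\ref{lemma: VPQG->NU} together with Remark~\ref{remarkUCC}), the functional $x^*$ yields a lower bound $\|\Ind_{\varepsilon,A}\|\gtrsim|A|$ uniformly over $A\subset\{n_i:i\in\NN\}$ and $\varepsilon\in\EE^A$, while the trivial bound $\|\Ind_{\varepsilon,A}\|\le\alpha_1|A|$ gives the reverse inequality; hence $\YB$ is superdemocratic, and Proposition~\ref{propositionvpqgdemphilog} then makes it almost greedy. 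Your remaining two ingredients --- that the VPQG property passes to subbases, and that a $c_0$ spreading-model subsequence would force $\XB$ itself to be quasi-greedy by Lemma~\ref{lemmaSpread} --- are correct and are used in the same way by the paper. Adding the non--weakly-null branch is what is needed to complete your argument.
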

\begin{proof}
	If $\XB$ has a subsequence with a spreding model equivalent to the unit vector basis of $\mathtt{c}_{0}$, by Lemma~\ref{lemmaSpread} it is quasi-greedy. \\
	If $\XB$ is weakly null and does not have a subsequence with a spreding model equivalent to the unit vector basis of $\mathtt{c}_{0}$, by \cite[Proposition 5.3]{DKK2003} and Proposition~\ref{propositionvpqgdemphilog}  it has an almost greedy subsequence. \\
	If $\XB$ is not weakly null, there exist $\epsilon>0$,  $x^*\in \XX^*$ and a subsequence $\YB=\left(\xx_{n_i}\right)_{i\in \NN}$ such that $\left\vert x^*\left(\xx_{n_i}\right)\right\vert\ge \epsilon$ for all $i\in \NN$. Since $\XB$ is unconditional for constant coefficients (Lemma~\ref{lemma: VPQG->NU}), it follows that there is $C>0$ such that
	\begin{align*}
		&C\left\vert A\right\vert \ge \left\Vert \Ind_{\varepsilon, A}\right\Vert \ge  \frac{\left\vert A\right\vert}{C}, &&\forall A\subset\left\lbrace n_i: i\in \NN\right\rbrace, \forall \varepsilon\in \EE^A. 
	\end{align*}
	Hence, $\YB$ is superdemocratic. Given that $\YB$ is a VPQG basis of $\overline{\langle \YB\rangle}$, by Proposition~\ref{propositionvpqgdemphilog} it is almost greedy. 
\end{proof}

\section{Open questions}
In view of the results from the previous section, the natural question that arises is whether a Cesàro quasi-greedy or VPQG condition implies the convergence of the TGA.

\textbf{Question 1.} If $\XB$ is a CQG or a VPQG basis of $\mathbb X$, is $\XB$ quasi-greedy?

It is obvious that if the basis is unconditional, then it is quasi-greedy. The converse is false, but in \cite{AA2016}, the authors proved that if the basis is 1-quasi-greedy, that is, if $\|G_n(x)\| \leq \|x\|$ for all $x\in\mathbb X$ and $n\in\mathbb N$, then the basis is unconditional. Therefore, the constant 1 in the TGA convergence property helps us recover the unconditionality of the basis.

\textbf{Question 2.} If $\XB$ is a $1$-CQG basis of $\mathbb X$, is $\XB$ $C$-quasi-greedy for some $C\geq 1$?\bigskip

\textbf{Acknowledgments:} the authors thank  F. Albiac and T. Oikhberg for their help in the preparation of this paper.

\end{document}